\newtheorem{theorem}{Theorem}[section]
\newtheorem{lemma}[theorem]{Lemma}
\newtheorem{proposition}[theorem]{Proposition}
\newtheorem{corollary}[theorem]{Corollary}
\newtheorem{remar}[theorem]{Remark}
\theoremstyle{definition}
\newenvironment{remark}{\begin{remar}\rm}{\diams\end{remar}}
\newcommand{\diams}{\unskip\nobreak\hfil\penalty50%
\hskip1em\hbox{}\nobreak\hfil%
$\diamondsuit$\parfillskip=0pt\finalhyphendemerits=0}
\newcommand{\bfind}[1]{\index{#1}{\bf #1}}
\newcommand{\n}{\par\noindent}
\newcommand{\sn}{\par\smallskip\noindent}
\newcommand{\mn}{\par\medskip\noindent}
\newcommand{\bn}{\par\bigskip\noindent}
\newcommand{\pars}{\par\smallskip}
\newcommand{\parm}{\par\medskip}
\newcommand{\parb}{\par\bigskip}
\newcommand{\N}{\mathbb N}
\newcommand{\Q}{\mathbb Q}
\newcommand{\R}{\mathbb R}
\newcommand{\Z}{\mathbb Z}
\newcommand{\cO}{{\mathcal O}}
\newcommand{\cM}{{\mathcal M}}
\newcommand{\cF}{{\mathcal F}}
\newcommand{\cC}{{\mathcal C}}
\newcommand{\cU}{{\mathcal U}}
\newcommand{\mbb}[1]{\underline{#1}}
\newcommand{\sep}{^{\rm sep}}
\newcommand{\ac}{^{\rm ac}}
\newcommand{\rc}{^{\rm rc}}
\newcommand{\id}{\mbox{\rm id}}
\newcommand{\chara}{\mbox{\rm char}\,}
\newcommand{\trdeg}{\mbox{\rm trdeg}\,}
\newcommand{\res}{\mbox{\rm res}\,}
\newcommand{\supp}{\mbox{\rm supp}\,}
\newcommand{\Spec}{\mbox{\rm Spec}}
\newcommand{\MaxSpec}{\mbox{\rm MaxSpec}}
\newcommand{\TopM}{\mbox{\rm Top}\,M}
\begin{document}
\title[Places and real holomorphy rings]{Density of Composite Places in Function Fields and Applications to Real Holomorphy Rings}
\author[Eberhard Becker, Franz-Viktor and Katarzyna Kuhlmann]{Eberhard Becker, Franz-Viktor Kuhlmann and
Katarzyna Kuhlmann}
\address{Technische Universit\"at Dortmund, Fakult\"at f\"ur Mathematik, Vogelpothsweg 87, 44227 Dortmund, Germany}
\email{eberhard.becker@tu-dortmund.de}
\address{University of Szczecin, Institute of Mathematics, ul.~Wielkopolska 15, 70-451 Szczecin, Poland}
\email{fvk@usz.edu.pl}
\address{University of Szczecin, Institute of Mathematics, ul.~Wielkopolska 15, 70-451 Szczecin, Poland}
\email{Katarzyna.Kuhlmann@usz.edu.pl}

\date{25.\ 11.\ 2020}

\begin{abstract} 
Given an algebraic function field $F|K$ and a place $\wp$ on $K$, we
prove that the places that are composite with extensions of $\wp$
to finite extensions of $K$ lie dense in the space of all places of
$F$, in a strong sense. We apply the result to the case of $K=R$ any
real closed field and the fixed place on $R$ being its natural
(finest) real place. This leads to a new description
of the real holomorphy ring of $F$ which can be seen as an 
analogue to a certain refinement of Artin's solution
of Hilbert's 17th problem. We also determine the relation 
between the topological space $M(F)$ of all $\R$-places of $F$ (places with residue field
contained in $\R$), its subspace of all $\R$-places of $F$ that are composite with the
natural $\R$-place of $R$, and the topological space of all $R$-rational places. Further results
about these spaces as well as various classes of relative real holomorphy rings are proven. At
the conclusion of the paper the theory of real spectra of rings will be applied to interpret
basic concepts from that angle and to show
that the space $M(F)$ has only finitely many topological components.
\end{abstract}

\thanks{The second author is supported by Opus grant 2017/25/B/ST1/01815 from the National
Science Centre of Poland.\\
The authors would like to thank the referee for his careful reading and many useful suggestions.}
\subjclass[2010]{Primary 12J10, 12J15, secondary 12D15, 12J25}
\keywords{valued field, algebraic function field, place, Zariski space, formally real field, real closed field, real holomorphy ring, real spectrum, Nullstellensatz}
\maketitle
%
%
%
%
\section{Introduction}
\subsection{The main results on places of algebraic function fields}
\mbox{ }\sn
The Main Theorem of \cite{KP} showed the density (in a very strong sense) of certain types of places in
the space of all places of a function field of characteristic 0 (by ``function field'' we will always mean
an algebraic function field of transcendence degree at least 1). A modification of the Main
Theorem was then applied to various classes of holomorphy rings, including the real and the $p$-adic. In a
subsequent paper \cite{[K]}, the Main Theorem was generalized to arbitrary characteristic. The density of
several important sets of places was shown, such as prime divisors, as well as the Abhyankar places which
play a crucial role e.g.\ in \cite{[K--K1],[K8]}. While the paper \cite{KP} only considered the space
\[
S(F|K)\>=\> \{\xi \mbox{ place of $F$ }\mid \xi|_K = \id_K\}
\]
of all places of an algebraic function field $F|K$ that are trivial on $K$, the scope was 
widened in \cite{[K]} to the spaces
\[
S(F|K\,;\,\wp)\>=\>\{\xi\in S(F)\mid \xi|_K=\wp\}
\]
of places of $F$ that extend a fixed place $\wp$ of $K$. We note that every $S(F|K\,;\,\wp)$
is a subset of the \bfind{Zariski space} $S(F)$ of all places of $F$, and that 
$S(F|K)=S(F|K\,;\,\id_K)$.

However, one interesting subset of these spaces was entirely missed: the set consisting of those
places that factor over $S(F|K)$ (see below for the precise definition). In this paper we will
adapt the proofs of the density theorems from \cite{KP,[K]} so as to prove the density of this 
subset and show how this is used to obtain ample information on families of real holomorphy
rings and the topologies of various spaces of places into formally real fields.

\parm
In order to present our central theorems, we need some preparations. In contrast to 
the usage in \cite{[K--K1],[K],[K5],[K8],[K7]} we will treat places as usual functions and apply
them to elements from the left, that is, the image of $a$ under a
place $\xi$ will be denoted by $\xi(a)$. However, we will keep one convention: the residue field
of $F$ under $\xi$ will be denoted by $F\xi$. Further, the valuation, valuation ring and
valuation ideal associated with the place $\xi$ will be denoted by $v_{\xi}\,$, $\cO_{\xi}$ and 
$\cM_{\xi}\,$, respectively. The value group of $v_\xi$ on $F$ will be denoted by $v_{\xi} F$.

\pars
Now we have all definitions in place to state our first central theorem.
\begin{theorem}                             \label{MKPZ}
Take an arbitrary field $K$ with a place $\wp$, a function field $F$ over $K$,
a place $\xi\in S(F|K\,;\,\wp)$,
and nonzero elements $a_1,\ldots,a_m\in F$.
Choose $r\in\N$ such that $1\leq r\leq s=\trdeg F|K$ and an arbitrary ordering on $\Z^r$; denote by $\Gamma$ the
so obtained ordered abelian group. If $\trdeg F|K>1$ and $\wp$ is trivial while $\xi$ is not, then we assume in
addition that $\Gamma$ is the lexicographic product $\Gamma'\times\Z$, where $\Gamma'=\Z^{r-1}$ endowed with
an arbitrary ordering.


\pars
Then there is a place $\lambda\in S(F|K)$ and an extension $\wp'$ of $\wp$ from $K$ to $F\lambda$
such that, with $\xi':=\wp'\circ\lambda\in S(F|K\,;\,\wp)$,
\sn
(a)\ \ $F\lambda$ is a finite extension of $K$,
\sn
(b)\ \ $v_{\lambda}F\subseteq\Gamma$ with $(\Gamma:v_{\lambda}F)$ finite,
\sn
(c) \ if $a_i\in \cO_\xi\,$, then $\lambda(a_i)\in \cO_{\wp'}$ and $a_i\in \cO_{\xi'}\,$.
\sn
The following assertions can also be realized if, in case  
$\wp$ is trivial, we assume that $\xi$ is trivial: 
\sn
(d) \ if $a_i\in \cM_\xi\,$, then $\lambda(a_i)\in \cM_{\wp'}$ and $a_i\in \cM_{\xi'}\,$,
\sn
(e) \ if $\xi(a_i)\in K\wp$, then $\xi'(a_i) = \xi(a_i)$,
\sn
(f)\ \ $\lambda(a_i)\ne 0,\infty$ \ for $1\leq i \leq m$.

\mn
If $\wp$ is trivial while $\xi$ is not, then in addition to assertions (a), (b)
and (c), we can realize also (d) and (e), or alternatively, (f).
\end{theorem}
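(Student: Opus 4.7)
The plan is to adapt the construction in the proofs of the density theorems of \cite{KP,[K]}, with the additional requirement that the approximating place $\xi'\in S(F|K\,;\,\wp)$ be produced in the composed form $\wp'\circ\lambda$ where $\lambda\in S(F|K)$ is trivial on $K$. This amounts to demanding that the value-group data and the algebraic (residue-field) data of $\xi$ be realized already at the level of $\lambda$, while the extension from $K$ to $K\wp$ contributed by $\wp$ is deferred to the final layer $\wp'$.

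\medskip

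The first step is to choose a transcendence basis $t_1,\ldots,t_s$ of $F|K$ adapted to $\xi$ and to the elements $a_1,\ldots,a_m$. One selects $r$ of the $t_j$ (say $t_1,\ldots,t_r$) to serve as value-bearing variables, to which one assigns values in $\Gamma$ that generate a subgroup of finite index, and $s-r$ variables $t_{r+1},\ldots,t_s$ to serve as residue-bearing variables, to which one assigns algebraic residues $c_{r+1},\ldots,c_s$ in an algebraic closure of $K$. The resulting monomial-type valuation defines a place $\lambda$ on $K(t_1,\ldots,t_s)$ that is trivial on $K$, has value group of finite index in $\Gamma$, and residue field $K(c_{r+1},\ldots,c_s)$ finite over $K$. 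Since $F$ is a finite extension of $K(t_1,\ldots,t_s)$, one extends $\lambda$ to $F$ by standard valuation theory, and $F\lambda$ is again finite algebraic over $K$, establishing (a) and (b).

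\medskip

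The choice of transcendence basis and of the data $v_\lambda(t_i)$ and $c_j$ must now be coordinated with the approximation requirements (c)--(f). Following \cite{KP,[K]}, one proceeds by successively multiplying the $a_i$ by monomials in $t_1,\ldots,t_r$ to normalize their $v_\xi$-values and then translating the $t_j$ by suitable elements of $K$ so that the $\lambda$-residues of the resulting quantities agree with the $\xi$-residues after applying $\wp$. Once $\lambda$ is constructed, the place $\wp$ admits finitely many extensions $\wp'$ to the finite algebraic extension $F\lambda$ of $K$; the freedom in choosing among these conjugate extensions is what guarantees $\xi'(a_i)=\xi(a_i)$ in the situations demanded by (e).

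\medskip

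The principal obstacle lies in reconciling (d) and (e) with (f) in the delicate case where $\wp$ is trivial but $\xi$ is not: in that situation $\wp'$ is forced to be trivial, since it is a place on an algebraic extension of the trivially-valued $K$, and so $\xi'=\lambda$. The condition (d) then demands $\lambda(a_i)=0$ whenever $a_i\in\cM_\xi$, while (f) demands $\lambda(a_i)\neq 0,\infty$; these are incompatible as soon as some $a_i$ lies in $\cM_\xi$, which is exactly why the theorem splits into two alternatives in this case. The hypothesis that $\Gamma=\Gamma'\times\Z$ is lexicographic (when $\trdeg F|K>1$) is what allows one to decompose $\lambda$ so that its bottom $\Z$-layer carries the unique nontrivial rank-contribution of $\xi$, leaving the higher layers free to realize the remaining conditions simultaneously.
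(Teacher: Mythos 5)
Your outline has the right global shape (a place $\lambda$ on a suitable rational subfield with value group of finite index in $\Gamma$ and finite residue extension, then composition with an extension $\wp'$ of $\wp$), and your analysis of why (d) and (f) are incompatible when $\wp$ is trivial and $\xi$ is not is correct. But the proposal has a genuine gap at exactly the point where the theorem is hard: nothing in it forces the approximation conditions (c)--(e) to hold. If you build a monomial-type place on $K(t_1,\ldots,t_s)$ and then extend it to $F$ ``by standard valuation theory,'' you have no control whatsoever over $\lambda(a_i)$: the extension to $F$ is far from unique, and even for a fixed extension the residue of $a_i=g_i(\mbb{t},y)/h_i(\mbb{t})$ depends on where the root $y$ of $f(\mbb{t},Y)$ goes. ``Successively multiplying the $a_i$ by monomials and translating the $t_j$ by elements of $K$'' is not an argument here, because what is needed is an actual point $(\mbb{t}',y')$ in a finite extension $K_1$ of $K$ at which $f$ has a simple root, the $h_i$ do not vanish, and the \emph{valuation} inequalities $v(g_i)\geq v(h_i)$ (resp.\ $>$, resp.\ $v(g_i/h_i-a_i')>0$) encoding $a_i\in\cO_\xi$, $a_i\in\cM_\xi$, $\xi(a_i)\in K\wp$ all hold simultaneously. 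The paper obtains such a point by a transfer principle: it extends $\xi$ to $F'=K_0.F$ with $K_0$ an algebraic closure of $K$, and uses that the algebraically closed (nontrivially) valued field $(K_0,\xi_0)$ is existentially closed in $(F',\xi)$ (model completeness of ACVF), the witness in $F'$ being $(\mbb{t},y)$ itself. It then embeds $F$ into a henselization of $K_1(x_1,\ldots,x_s)$ via $t_i\mapsto t_i'+x_i$ and a Hensel lift of $y'$, which pins down the extension of the place and yields $\lambda(a_i)=g_i(\mbb{t}',y')/h_i(\mbb{t}')$ exactly. This model-theoretic step is the engine of the whole proof and is absent from your proposal.

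A second, related problem: you claim that (e) is achieved by choosing among the finitely many conjugate extensions $\wp'$ of $\wp$ to $F\lambda$. That is not how it can work. In the paper $\wp'$ is not chosen freely afterwards; it is the restriction to $F\lambda\subseteq K_1$ of the place $\xi_0$ (the extension of $\xi$ to $K_0$), and (e) follows because the construction has already arranged $v_{\xi_0}(g_i(\mbb{t}',y')/h_i(\mbb{t}')-a_i')>0$ for an $a_i'\in K$ with $\wp(a_i')=\xi(a_i)$. Without coupling the construction of $\lambda$ to $\xi$ from the start, there is no reason any conjugate of $\wp'$ sends $\lambda(a_i)$ to $\xi(a_i)$. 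Finally, for the case $\wp$ trivial and $\xi$ nontrivial with $\trdeg F|K>1$, the actual mechanism is to pick $x$ with $\xi(x)=0$ and apply the already-proven nontrivial case to $F|K(x)$ with the $x$-adic place in the role of $\wp$, then set $\lambda=\xi_x\circ\lambda'$; this is what consumes the lexicographic hypothesis $\Gamma=\Gamma'\times\Z$, and your proposal only alludes to it without giving the reduction.
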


We present two additions to the above theorem which work under stronger assumptions.
\begin{proposition}                         \label{MKPZa1}
Assume that the setting is as in Theorem~\ref{MKPZ}, and in addition, that $F\xi=K\wp$. 
In case $\wp$ is trivial, also assume that $\trdeg F|K>1$. Then in addition to the results of 
Theorem~\ref{MKPZ} we can also obtain that $F\xi'|K\wp$ is a finite purely inseparable extension.
\end{proposition}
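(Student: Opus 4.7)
The plan is to combine Theorem~\ref{MKPZ} with the hypothesis $F\xi=K\wp$ to force the residue field extension $F\xi'|K\wp$ to be purely inseparable. First, apply Theorem~\ref{MKPZ} to obtain $\lambda\in S(F|K)$, $\wp'$ extending $\wp$ to $F\lambda$, and $\xi':=\wp'\circ\lambda$ satisfying the listed conclusions. By~(a), $F\lambda|K$ is finite, so $F\xi'=(F\lambda)\wp'$ is finite over $K\wp$. Let $L$ denote the separable closure of $K\wp$ in $F\xi'$; it suffices to show that $\lambda$ and $\wp'$ can be arranged so that $L=K\wp$.

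The key use of the hypothesis $F\xi=K\wp$ is that every element of $\cO_\xi$ is congruent modulo $\cM_\xi$ to an element of $\cO_\wp\subseteq K$. The approach is to trace the proof of Theorem~\ref{MKPZ} and identify a finite set of elements $z_1,\ldots,z_n\in\cO_\lambda\subseteq F$ whose $\lambda$-residues generate $F\lambda$ as an extension of $K$. Augmenting the list $a_1,\ldots,a_m$ with $z_1,\ldots,z_n$ and re-running the construction, property~(e) yields $\xi'(z_j)=\xi(z_j)\in K\wp$, that is $\wp'(\lambda(z_j))\in K\wp$ for each~$j$. Choosing lifts $\tilde c_j\in\cO_\wp\subseteq K$ of these residues, the differences $\lambda(z_j)-\tilde c_j$ lie in $\cM_{\wp'}$ and still generate $F\lambda$ over $K$. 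A careful analysis of the minimal polynomials of these shifted generators over $K$ (which must reduce modulo $\wp$ to polynomials whose roots collapse to $0$ in $K\wp$) shows that the only residue field extensions so produced are purely inseparable, so $L=K\wp$ and $F\xi'|K\wp$ is finite purely inseparable.

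The main obstacle is the bootstrap step: the generators $z_j$ are produced by the construction of $\lambda$ itself, so their identification requires threading the re-application of Theorem~\ref{MKPZ} through a specific choice that remains compatible with the value-group constraint~(b). The hypothesis $\trdeg F|K>1$ in the case $\wp$ is trivial is precisely what provides enough transcendence to realize the enlarged input list while maintaining $v_\lambda F\subseteq\Gamma$ of finite index. In characteristic~$0$ the conclusion reduces to $F\xi'=K\wp$; in positive characteristic~$p$ it permits $F\xi'\subseteq(K\wp)^{1/p^\infty}$, which is exactly what the statement of finite pure inseparability allows.
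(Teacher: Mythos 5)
There is a genuine gap, and it sits exactly at the step you flag as the ``main obstacle''. Two problems. First, the bootstrap is not just delicate but circular in a way your sketch does not resolve: the generators $z_1,\ldots,z_n$ of $F\lambda$ over $K$ only exist once $\lambda$ has been constructed, and re-running the construction with an enlarged input list produces a \emph{different} $\lambda$ whose residue field needs different generators; moreover, assertion {\it (e)} applies to $a_i$ only when $\xi(a_i)\in K\wp$, and your $z_j$ lie in $\cO_\lambda$ but need not lie in $\cO_\xi$ (the two valuation rings are incomparable in general), so the hypothesis $F\xi=K\wp$ does not hand you $\xi(z_j)\in K\wp$. Second, and fatally, the concluding claim is false even where it can be tested: knowing that field generators of $F\lambda$ over $K$ have $\wp'$-residues in $K\wp$ does not control the residue field $(F\lambda)\wp'$, because that residue field is generated by the residues of \emph{all} elements of $F\lambda\cap\cO_{\wp'}$. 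Concretely, take $K=\Q$ with $\wp$ the $5$-adic place and $F\lambda=\Q(5\sqrt{2})=\Q(\sqrt 2)$: the generator $5\sqrt 2$ has minimal polynomial $X^2-50\equiv X^2 \pmod 5$, so its roots do ``collapse to $0$ in $K\wp$'', yet $(F\lambda)\wp'=\mathbb{F}_{25}$ is a nontrivial \emph{separable} extension of $\mathbb{F}_5$. So no analysis of minimal polynomials of shifted generators can yield pure inseparability.

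The paper's proof does not treat Theorem~\ref{MKPZ} as a black box; it changes the construction at its root. Instead of taking $K_0$ to be an algebraic closure of $K$ (whose residue field is algebraically closed, hence admits large separable residue extensions of $K\wp$ inside $K_1\wp_1$), one embeds $(F,\xi)$ into a maximal algebraic extension $(L,\xi)$ whose residue field is forced to be $(K\wp)^{1/p^{\infty}}$; this $(L,\xi)$ is a tame field, and $K_0$ is taken to be the relative algebraic closure of $K$ in $L$. The existential closedness of $(K_0,\xi_0)$ in $(L,\xi)$, which drives the whole argument, is then supplied by the Ax--Kochen--Ershov principle for tame fields rather than by Robinson's theorem for algebraically closed valued fields. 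With this choice, $K_1\wp_1$ is automatically a finite subextension of the purely inseparable extension $(K\wp)^{1/p^{\infty}}|K\wp$, and $F\xi'\subseteq K_1\wp_1$ gives the conclusion; the case of trivial $\wp$ with $\trdeg F|K>1$ is then reduced to this one via the $x$-adic place on $K(x)$. This residue-field engineering at the level of $K_0$ is the missing idea in your proposal.
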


Recall that a field $K$ is \bfind{existentially closed in an extension field $F$} if
every existential sentence in the language of rings with parameters from $K$ which holds
in $F$ will also hold in $K$. For further explanations, see \cite[Section 1]{KP}. Similarly, a
valued field $K$ (or an ordered field $K$ with a valuation) is existentially closed in an
extension field $F$ with an ordering extending that of $K$ (or ordering and
valuation extending those of $K$, respectively)
if every existential sentence in the language of rings with a relation symbol for
a valuation (or with relation symbols for an ordering and a valuation, respectively) and with
parameters from $K$ which holds in $F$ will also hold in $K$.
\begin{proposition}                         \label{MKPZa2}
Assume that the setting is as in Theorem~\ref{MKPZ}. 
If $(K,\wp)$ is existentially closed in $(F,\xi)$, then in addition to the results of 
Theorem~\ref{MKPZ} we can obtain that $F\lambda=K$, $v_{\lambda}F=\Gamma$, $F\xi'=K\wp$ and 
$\wp'=\wp$.
\end{proposition}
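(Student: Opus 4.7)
My plan is to first apply Theorem~\ref{MKPZ} to obtain a candidate triple $(\lambda_0,\wp'_0,\xi'_0)$ satisfying (a)--(f), and then use the existential closedness hypothesis to ``descend'' the finite residue extension $F\lambda_0|K$ and the finite-index inclusion $v_{\lambda_0}F\subseteq\Gamma$ to equalities. Note that once $F\lambda=K$ is secured, the remaining assertions become automatic: $\wp'$ is any extension of $\wp$ from $K$ to $F\lambda=K$, so $\wp'=\wp$, and then $\xi'=\wp'\circ\lambda=\wp\circ\lambda$ has image $\wp(K)=K\wp$, giving $F\xi'=K\wp$.

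Concretely, I would isolate the finite data that describes $\lambda_0$: choose lifts $c_1,\ldots,c_n\in\cO_{\lambda_0}$ whose $\lambda_0$-residues generate $F\lambda_0|K$, together with their defining polynomial relations over $K$, and elements $t_1,\ldots,t_r\in F^\times$ whose $\lambda_0$-valuations form a basis of $v_{\lambda_0}F$ inside $\Gamma$. The strengthened construction---requiring in addition $F\lambda=K$ and $v_\lambda F=\Gamma$ and preserving (c)--(f)---can then be encoded as an existential formula $\varphi$ in the language of valued rings with parameters in $K$, whose existential variables play the roles of the $c_i$'s, the $t_j$'s, and additional witnesses for conditions (c)--(f) on $a_1,\ldots,a_m$. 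The construction behind Theorem~\ref{MKPZ}, combined with a modest tightening step, produces witnesses for $\varphi$ in $(F,\xi)$. By the existential closedness of $(K,\wp)$ in $(F,\xi)$, the formula $\varphi$ is then realized by a tuple in $K$. These $K$-witnesses, together with a transcendence basis of $F|K$ chosen in $F$, assemble into the desired place $\lambda\in S(F|K)$ with $F\lambda=K$, $v_\lambda F=\Gamma$, and satisfying (c)--(f).

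The main obstacle is the formulation of $\varphi$. A priori, ``being a place of $F|K$'' is a universal assertion (over the whole valuation ring), but for algebraic function fields a place is determined by finite data---a transcendence basis and the residue/valuation behavior of finitely many algebraic generators of $F$ over it---and the consistency conditions for such data can be captured within the existential fragment of the valued-ring language. A further subtlety is that conditions (c)--(f) refer to the specific elements $a_i\in F$, which cannot appear as parameters of an existential formula over $K$ used for the descent; I expect to handle this by incorporating the $a_i$ as additional existentially quantified variables and then re-identifying the resulting $K$-witnesses with the original $a_i$'s using the classical theory of extensions of places to finitely generated extensions. Making these encoding and re-identification steps precise, and verifying that the $K$-witnesses indeed define a genuine place of $F$ with the prescribed residue field and value group, constitutes the technical heart of the proof.
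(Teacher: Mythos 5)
Your plan gets one thing right --- once $F\lambda=K$ is secured, $\wp'=\wp$ and $F\xi'=K\wp$ are indeed automatic --- but the core of the argument is both left undone and, as formulated, rests on a misapplication of the hypothesis. Existential closedness of $(K,\wp)$ in $(F,\xi)$ is a statement about the valued field $(F,v_\xi)$: an existential formula in the language of valued rings, interpreted in $F$, speaks about $v_\xi$ and $\xi$. Your formula $\varphi$, however, is meant to express properties of the \emph{new} place $\lambda_0$ (residues of the $c_i$ modulo $\cM_{\lambda_0}$, elements $t_j$ with prescribed $v_{\lambda_0}$-values). These are conditions on $v_{\lambda_0}$, not on $v_\xi$, so they cannot be written as a formula whose valuation symbol is interpreted as $v_\xi$, and the transfer to $(K,\wp)$ cannot be performed. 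Moreover, even granting a correct encoding, the step you defer as the ``technical heart'' --- assembling $K$-witnesses into a genuine place of $F$ with residue field exactly $K$ and value group exactly $\Gamma$ --- is precisely the nontrivial content of the proposition, not a routine verification.

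The paper's proof avoids all of this by not post-processing a place obtained from Theorem~\ref{MKPZ}: it reruns the \emph{proof} of that theorem with $K_0=K$ in place of an algebraic closure of $K$. Existential closedness is applied there to the conditions (i)--(v), which concern $v_\xi$ and $\xi$ evaluated at the generators $t_1,\dots,t_s,y$ of the function field and at the $a_i$ --- exactly the kind of data the hypothesis can transfer --- producing solutions $t'_1,\dots,t'_s,y'$ in $K$ itself, so that $K_1=K$. The place $\lambda$ is then built by the explicit valued-field construction (Bourbaki's theorem, an immediate transcendental extension, henselization, Hensel's lemma), which forces $F\lambda\subseteq K_1=K$; and since the $t'_i$ lie in $K$, one gets $x_i=t_i^*-t'_i\in\iota(F)$ and hence $v_\lambda F=\Gamma$. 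You would also need to treat the case of trivial $\wp$ separately: there, existential closedness forces $\xi$ to be trivial as well, and one combines the corresponding part of the proof of Theorem~\ref{MKPZ} with the second assertion of Theorem~\ref{exrp}.
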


If $<$ is an ordering on the field $F$, then we will say that $\xi$ (or its associated valuation
$v_\xi$) is \bfind{compatible with $<$} if $\cO_\xi$ is convex relative to this given ordering.
That a place $\xi$ on $F$ is compatible with some of the orderings on $F$ is equivalent to the
statement that $F\xi$ is a formally real field. This is one essential part of the 
Baer-Krull Theorem (see \cite[Theorem 10.1.10]{BCR}).

Here is the convention we will follow: places into formally real fields are called \bfind{real
places}; valuation rings with a formally real residue field, i.e., the valuation rings of real
places, are called \bfind{real valuation rings}; finally, places $\xi$ with $F\xi\subseteq \R$
are denoted as \textbf{$\R$-places}.

\pars
Now we have all definitions in place to state our second central theorem, which adapts 
Theorem~\ref{MKPZ} to the real case.
\begin{theorem}                             \label{MKPZr}
Assume that $K=R$ is a real closed field and $F$ is an ordered function field over $K$. Assume
further that $\wp$ is a place on $R$ compatible with its ordering and $\xi$ is a place on $F$
compatible with the ordering $<$ of $F$. Take elements $a_1,\ldots,a_m\in F$ and let $r\in\N$ 
and $\Gamma$ be as in Theorem~\ref{MKPZ}. 
Then there is a place $\lambda\in S(F|R)$  such that, with $\xi':=\wp\circ\lambda\in 
S(F|R\,;\,\wp)$,
\sn
(a)\ \ $F\lambda=R$ and $F\xi'=R\wp$,
\sn
(b)\ \ $v_{\lambda}F=\Gamma$,
\sn
(c) \ if $a_i\in \cO_\xi\,$, then $\lambda(a_i)\in \cO_{\wp}$ and $a_i\in \cO_{\xi'}\,$,
\sn
(c') \ if $a_i>0$ and $\xi(a_i)\ne 0,\infty$, then $\xi'(a_i) >0$.
\sn
The latter implies that if $\infty\ne\xi(a_i)>0$, then $\xi'(a_i) >0$.

\mn
The following assertions can also be realized if, in case  
$\wp$ is trivial, we assume that also $\xi$ is trivial:
\sn
(d) \ if $a_i\in \cM_\xi\,$, then $\lambda(a_i)\in \cM_{\wp}$ and $a_i\in \cM_{\xi'}\,$,
\sn
(e) \ if $\xi(a_i)\in R\wp$, then $\xi'(a_i) = \xi(a_i)$,
\sn
(f)\ \ $\lambda(a_i)\ne 0,\infty$ \ for $1\leq i \leq m$,
\sn
(g) \ if $a_i>0$, then $\lambda(a_i) >0$.

\mn
If $\wp$ is trivial while $\xi$ is not, then in addition to assertions (a), (b), (c) 
and (c'), we can realize also (d) and (e), or alternatively, (f) and (g).
\end{theorem}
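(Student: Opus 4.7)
The plan is to deduce Theorem~\ref{MKPZr} from Theorem~\ref{MKPZ} by enlarging the list of distinguished elements to enforce the positivity clauses (c') and (g), and then using the real closedness of $R$ to upgrade ``$F\lambda$ finite over $R$'' to the equality $F\lambda=R$. The main difficulty lies in this upgrade, since Theorem~\ref{MKPZ} by itself leaves open the possibility that $F\lambda=R(\sqrt{-1})$.

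First I would pass to a finite ordered extension of $F$ containing the relevant square roots. Working inside a real closure $F\rc$ of $(F,<)$, for each $i$ with $a_i>0$ pick $c_i\in F\rc$ with $c_i^2=a_i$, and set $F^*:=F(c_i\mid a_i>0)$. Then $F^*|F$ is finite, $F^*|R$ is again a function field of transcendence degree $s$, and $F^*$ inherits an ordering from $F\rc$. Since $\xi$ is compatible with $<$, it extends to a place $\xi^*$ on $F^*$ compatible with the inherited ordering and satisfying $\xi^*|_R=\wp$. I then apply Theorem~\ref{MKPZ} to the data $(R,\wp,F^*,\xi^*)$, the enlarged list $a_1,\dots,a_m,c_1,\dots,c_k$, and the same $(r,\Gamma)$. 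This produces $\lambda^*\in S(F^*|R)$ with $F^*\lambda^*$ a finite extension of $R$ and satisfying the clauses (a)--(f) of Theorem~\ref{MKPZ}; define $\lambda:=\lambda^*|_F\in S(F|R)$.

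Next I would derive Theorem~\ref{MKPZr}(a) and (g). Once $F^*\lambda^*=R$ is established, clause (a) follows since $F\lambda\subseteq F^*\lambda^*=R$, and then $F\xi'=R\wp$ by applying $\wp$. For (g): if $a_i>0$ then by the choice of the enlarged list $\lambda(a_i)=\lambda^*(c_i)^2$; clause (f) of Theorem~\ref{MKPZ} applied to $c_i$ gives $\lambda^*(c_i)\ne 0,\infty$, so $\lambda(a_i)$ is a nonzero square in $R$ and hence positive. For (c'): when additionally $\xi(a_i)\ne 0,\infty$, clauses (c) and (d) give $\lambda(a_i)\in\cO_\wp\setminus\cM_\wp$; combined with $\lambda(a_i)>0$ and the compatibility of $\wp$ with the ordering on $R$, this yields $\xi'(a_i)=\wp(\lambda(a_i))>0$. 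Clauses (b), (c), (d), (e), (f) of Theorem~\ref{MKPZr} translate directly from the corresponding clauses of Theorem~\ref{MKPZ} applied to $\lambda^*$ and then restricted to $F$, and the case distinction for trivial $\wp$ mirrors the one in Theorem~\ref{MKPZ}.

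The main obstacle is enforcing $F^*\lambda^*=R$, i.e., excluding the alternative $F^*\lambda^*=R(\sqrt{-1})$; merely adjoining square roots of positive elements does not rule this out, since $R(\sqrt{-1})$ contains all square roots as well. One route is to trace through the construction of $\lambda^*$ in the proof of Theorem~\ref{MKPZ}: the place is built through a sequence of residue formations along a carefully chosen transcendence basis, and because $\xi^*$ is compatible with the ordering on $F^*$, each step can be verified to take place inside an ordered residue field, so that the final residue field inherits an ordering and must equal $R$. A conceptually cleaner route is an ordered-valued-field analogue of Proposition~\ref{MKPZa2}: the real closed field $R$ with its compatible place $\wp$ is existentially closed in $(F^*,\xi^*)$ as an ordered valued field (by the model-completeness of real closed fields with a convex valuation), which directly yields $F^*\lambda^*=R$ and $v_{\lambda^*}F^*=\Gamma$.
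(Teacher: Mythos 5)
Your proposal is correct and follows essentially the same route as the paper: the ``conceptually cleaner route'' you mention at the end --- existential closedness of the real closed field together with its compatible ordering and valuation (parts 3) and 4) of Theorem~\ref{realexcl}) fed into Proposition~\ref{MKPZa2} --- is exactly how the paper obtains $F\lambda=R$ and $v_{\lambda}F=\Gamma$, and the paper handles assertion (g) by the same device of passing to a finite ordered extension of $F$ in which every positive $a_i$ is a square. One small repair for (c'): clause (d) does not yield $\lambda(a_i)\notin\cM_\wp$; rather, since $\xi(a_i)\ne 0$ one has $a_i^{-1}\in\cO_\xi$, so one should include $a_i^{-1}$ in the list and apply (c) to it to get $\lambda(a_i)^{-1}\in\cO_\wp$, which is how the paper argues.
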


\sn
\begin{remark}
In the case where $\wp$ is trivial while $\xi$ is not, assertion {\it (d)} is incompatible
with {\it (f)} and {\it (g)} because in this case, $\cO_{\xi'}=\cO_{\lambda}$. Hence if $0\ne a_i\in\cM_\xi$ and
$\lambda$ satisfies assertion {\it (d)}, then $a_i\in\cM_{\lambda}$, hence $\lambda(a_i)=0$ so that assertion
{\it (f)} is not satisfied by $\lambda$; if in addition $a_i>0$, then also assertion
{\it (g)} is not satisfied by $\lambda$.
\end{remark}

\begin{proposition}                             \label{MKPZa3}
Assume that the setting is as in one of the above theorems or propositions. Then there are
infinitely many nonequivalent places $\lambda$ and $\xi'$ which satisfy all assertions of the
respective theorem or proposition, except in the case where $\wp$ is trivial while $\xi$ is not, 
$\trdeg F|K=1$ and we wish that assertions {\it (d)} and {\it (e)} are satisfied.
\end{proposition}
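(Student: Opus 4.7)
The plan is to produce the required infinite family of pairwise nonequivalent places by iterating the underlying construction. We first apply the relevant theorem or proposition to obtain a single pair $(\lambda_1,\xi'_1)$ satisfying all prescribed assertions for the given list $a_1,\ldots,a_m$. Given pairs $(\lambda_i,\xi'_i)_{i\le N}$ already in hand, the idea is to enlarge the test list by additional elements chosen so that a fresh application of the same result yields a pair $(\lambda_{N+1},\xi'_{N+1})$ which still satisfies all the original assertions and which is inequivalent to every previously constructed pair.

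The inequivalence criterion we rely on is standard: two places $\lambda,\lambda'$ of $F$ are equivalent iff $\cO_\lambda=\cO_{\lambda'}$, and if some $b\in F$ has $\lambda(b),\lambda'(b)$ both finite in a common extension of $K$ but distinct, then $b-\lambda(b)\in\cM_\lambda$ while $v_{\lambda'}(b-\lambda(b))=0$, so $\cO_\lambda\ne\cO_{\lambda'}$. To exploit this, fix a \emph{witnessing} element $b\in F$ transcendental over $K\wp$; such $b$ exists after a finite extension of the base since $\trdeg F|K\ge 1$. The previously constructed values $\lambda_1(b),\ldots,\lambda_N(b)$ form a finite subset of the residue fields $F\lambda_i$, and we choose a target value $c$ outside this subset (possible because $R$ is infinite in the setting of Theorem~\ref{MKPZr}, and in the other cases one may work in a sufficiently large residue field extension). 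We then adjoin to the test list an element designed to pin $\lambda_{N+1}(b)$ to $c$, for instance $b-c$ together with the complementary trapping data $c+\varepsilon - b$ and $b-c+\varepsilon$ for small positive $\varepsilon$: in the real setting the order-theoretic assertion (g) of Theorem~\ref{MKPZr} then forces $\lambda_{N+1}(b)$ into an arbitrarily narrow interval about $c$, disjoint from every $\lambda_i(b)$; in the general setting assertion (e) of Theorem~\ref{MKPZ} pins the $\wp'$-residue of $\lambda_{N+1}(b)$ to $c$, which is already enough to separate $\cO_{\xi'_{N+1}}$ from $\cO_{\xi'_i}$. The nonequivalence of the $\xi'$'s propagates from that of the $\lambda$'s by applying the same criterion to the composed places.

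The one genuine obstacle, and the reason for the stated exception, is the case $\wp$ trivial, $\xi$ nontrivial, $\trdeg F|K=1$ with both (d) and (e) demanded. There $\wp'$ is trivial so $\xi'=\lambda$, and the conjunction of (d) and (e) forces $\lambda$ to agree with $\xi$ on every prescribed $a_i\in\cO_\xi$, both in the membership $a_i\in\cM_{\lambda}$ and in the residue $\lambda(a_i)=\xi(a_i)\in K$. On a one-variable function field a place in $S(F|K)$ corresponds essentially to a closed point of a normal model of the curve, and enough matching with $\xi$ on $\cO_\xi$ collapses the admissible $\lambda$ to finitely many candidates (the lifts of the center of $\xi$), leaving no room for an infinite family. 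The iterative construction above breaks down precisely here because no witnessing element $b$ can be chosen freely: every candidate $b$ whose $\xi$-residue is prescribed by (e) has its $\lambda$-value already determined, and every $b\in\cM_\xi$ is forced into $\cM_\lambda$ by (d), so the distinguishing mechanism of the second paragraph has no room to act.
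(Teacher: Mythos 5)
There is a genuine gap at the heart of your separating mechanism. The assertions of Theorems~\ref{MKPZ} and~\ref{MKPZr} only allow you to force the new place to \emph{agree with the fixed place $\xi$} (or with the fixed ordering) on finitely many elements; they give you no way to prescribe a freshly chosen target value. Assertion {\it (e)} pins $\xi'(a_i)$ to $\xi(a_i)$, not to an arbitrary $c$: if you put $b$ (or $b-c$) into the test list, every place produced by the theorem takes the \emph{same} value there, which makes the successive places agree on your witnessing element rather than differ. Likewise, for assertion {\it (g)} to trap $\lambda_{N+1}(b)$ in $(c-\varepsilon,c+\varepsilon)$ you need $c-\varepsilon<b<c+\varepsilon$ to hold in the \emph{fixed} ordering of $F$, which confines $c$ to a $2\varepsilon$-neighbourhood of the cut of $b$ over $R$; you cannot move $c$ freely past the previously attained values, and nothing guarantees the admissible intervals avoid them. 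So the induction step as written does not produce a new, inequivalent place.

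The mechanism that does work is different: each previously constructed $\lambda_j$ is nontrivial (its residue field is algebraic over $K$ while $\trdeg F|K\geq 1$), hence has a pole $a_{m+j}$ with $\lambda_j(a_{m+j})=\infty$; adjoining these poles to the list and invoking assertion {\it (f)} gives a new place finite at all of them, hence inequivalent to every $\lambda_j$. This covers all cases where {\it (f)} can be realized. Two further points you gloss over also need arguments. First, passing from nonequivalent $\lambda$'s to nonequivalent $\xi'=\wp'\circ\lambda$'s is not automatic ``by the same criterion'': one must observe that $\cO_{\lambda_1},\cO_{\lambda_2}$ are overrings of a common valuation ring $\cO_{\xi'}$, hence comparable by inclusion, and that comparability together with both residue fields being algebraic over $K$ forces equality. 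Second, in the case where $\wp$ is trivial, $\xi$ is not, $\trdeg F|K>1$ and one wants {\it (d)} and {\it (e)} (where {\it (f)} is incompatible with {\it (d)}), the argument must descend to the places $\lambda'$ over $K(x)$, where (f$'$) holds, obtain infinitely many nonequivalent $\lambda'$ there, and then apply the comparability argument to the composites $\xi_x\circ\lambda'$. Your proposal does not treat this case.
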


\begin{remark}
If $\trdeg F|K=1$ and $\wp$ is trivial while $\xi$ is not, then $\xi$ itself satisfies assertions
{\it (a)}, {\it (b)}, {\it (c)}, {\it (d)}, {\it (e)} of the theorems, and also assertion 
{\it (c')} in the setting of Theorem~\ref{MKPZr}. This will be shown in the proofs of the two
theorems. But it may be the only such place. If for instance $\wp$ is the identity, $F=K(x)$, 
$\xi$ is the $x$-adic place and $a_1=x$, then $\xi(x)=0\in K=K\wp$ so that by assertion 
{\it (e)}, $\lambda(x)=\xi'(x)=\xi(x)=0$, whence $\lambda=\xi$.
\end{remark}

The above theorems and propositions will be proven in Section~\ref{sectprac}.
The proof of Theorem~\ref{MKPZr} uses the fact that a real closed field is
existentially closed in every formally real extension field; see parts 3) and 4) of Theorem~\ref{realexcl}. This
allows us to apply Proposition~\ref{MKPZa2} as well as
Theorem~\ref{exrp}, a main ingredient to our proofs which is
a generalization of Theorem~23 of \cite{[K]}.

\bn
%
%
\subsection{Applications to spaces of places of algebraic function fields}
\mbox{ }\sn
Given two places $\xi'$ and $\pi'$, we will say that $\xi'$ \bfind{factors over $\pi'$} (or in other words, is
\bfind{composite with $\pi'$}) if there is a place $\lambda$ such that $\xi'=\pi'\circ\lambda$. Theorem~\ref{MKPZ}
shows the strong density of the subset of $S(F|K\,;\,\wp)$ of all places $\xi'$ that factor over $\wp'$ for a
suitable finite extension $(K',\wp')$ of $(K,\wp)$. Let us describe one consequence of the strong
density. Every set $S(F|K\,;\,\wp)$ carries the \bfind{Zariski topology}, for which the basic open sets are the
sets of the form
\begin{equation}                            \label{ZT}
\{\xi\in S(F|K\,;\,\wp)\mid a_1\,,\ldots,\,a_k\in {\cO}_\xi\}\;,
\end{equation}
where $k\in\N\cup\{0\}$ and $a_1,\ldots,a_k\in F$. With this topology, $S(F|K\,;\,\wp)$ is a
spectral space (see \cite[Appendix]{[K]} for a proof, and \cite{[H]} for details on spectral
spaces); in particular, it is quasi-compact. Its associated \bfind{patch
topology} (or \bfind{constructible topology}) is the finer topology
whose basic open sets are the sets of the form
\begin{equation}                            \label{PT}
\{\xi\in S(F|K\,;\,\wp)\mid a_1\,,\ldots,\,a_k\in {\cO}_\xi\,;\,
a_{k+1}\,,\ldots,\,a_{k+\ell}\in {\cM}_\xi\}\;,
\end{equation}
where $k,\ell\in\N\cup\{0\}$ and $a_1,\ldots,a_{k+\ell} \in F$. With the patch topology, 
$S(F|K\,;\,\wp)$ is a totally disconnected compact Hausdorff space. 

Note that every set $S(F|K\,;\,\wp)$ contains nontrivial places since $\trdeg F|K>0$ 
by our general assumption. Therefore, from 
Theorem~\ref{MKPZ} (in particular assertions {\it (c)} and {\it (d)})
in connection with Proposition~\ref{MKPZa3}, we obtain:
\begin{corollary}                                    \label{corMKPZ}
Take a function field $F|K$ and a place $\wp$ on $K$. Then every nonempty open set in the 
Zariski topology of $S(F|K\,;\,\wp)$ contains infinitely many places that factor over $\wp'$ 
for a suitable finite extension $(K',\wp')$ of $(K,\wp)$. The same holds in the Zariski patch
topology, unless $\trdeg F|K=1$ and $\wp$ is trivial.
\end{corollary}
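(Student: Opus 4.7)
My plan is to derive both halves of the corollary by feeding the defining data of a basic open set into Theorem~\ref{MKPZ} and then invoking Proposition~\ref{MKPZa3} to pass from one candidate place to infinitely many. In either topology a nonempty basic open set $U$ is specified by finitely many elements $a_1,\ldots,a_{k+\ell}\in F$ subject to membership in some $\cO_\xi$ or $\cM_\xi$; I would begin by discarding any $a_i=0$ (which imposes no constraint) and picking an arbitrary $\xi\in U$ to serve as the input place.

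For the Zariski case only assertion {\it (c)} of Theorem~\ref{MKPZ} is required: it guarantees that the output place $\xi':=\wp'\circ\lambda$ still satisfies $a_i\in\cO_{\xi'}$, hence lies in $U$, while assertion {\it (a)} says $K':=F\lambda$ is a finite extension of $K$, exhibiting $\xi'$ as factoring over the place $\wp'$ on the finite extension $(K',\wp')$ of $(K,\wp)$. Assertion {\it (c)} is unconditionally available, and Proposition~\ref{MKPZa3} triggers no exception when we only request {\it (c)}, so infinitely many inequivalent such $\xi'$ are produced.

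For the patch topology the same argument proceeds with the enlarged list $a_1,\ldots,a_{k+\ell}$, but now additionally invoking assertion {\it (d)} of Theorem~\ref{MKPZ} on the elements $a_{k+1},\ldots,a_{k+\ell}$ to ensure $a_{k+j}\in\cM_{\xi'}$. When $\wp$ is nontrivial, {\it (d)} is freely available. When $\wp$ is trivial, I would either take $\xi$ itself to be trivial (which forces every $a_{k+j}=0$ and thereby reduces $U$ to a Zariski open set already handled) or, if no trivial $\xi$ lies in $U$, appeal to the last clause of Theorem~\ref{MKPZ}, which supplies {\it (d)} (together with {\it (e)}) even when $\xi$ is nontrivial, provided the ordered group $\Gamma$ is chosen lexicographic of the form $\Gamma'\times\Z$; this choice is at my disposal since the corollary places no constraint on $\Gamma$.

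The main subtlety, and the point I would verify most carefully, is that the exception carved out of the corollary matches exactly the exception in Proposition~\ref{MKPZa3}. Precisely when $\wp$ is trivial, $\trdeg F|K=1$, and some $a_{k+j}$ is nonzero, no trivial $\xi$ lies in $U$, so the preceding reduction fails and Theorem~\ref{MKPZ} delivers {\it (d)} only bundled with {\it (e)}; Proposition~\ref{MKPZa3} then declines to produce infinitely many inequivalent $\xi'$. This is precisely the case excluded in the statement of the corollary, so outside it the argument goes through, and the proof is complete.
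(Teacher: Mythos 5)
Your argument is correct and is exactly the one the paper intends: apply Theorem~\ref{MKPZ} (assertion \textit{(c)} for Zariski-basic opens, assertions \textit{(c)} and \textit{(d)} for patch-basic opens) to a point of the given nonempty basic open set, and invoke Proposition~\ref{MKPZa3} for the infinitude, with the case analysis for trivial $\wp$ handled just as the theorem's statement requires. Your only overstatement is that the corollary's exclusion ``matches exactly'' the exception of Proposition~\ref{MKPZa3} --- when $\trdeg F|K=1$ and $\wp$ is trivial but all $\cM$-conditions are vacuous the argument still succeeds, so the corollary's exclusion is merely broader than necessary, which is harmless.
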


\parb
Now we turn to our applications of Theorem~\ref{MKPZr}. Let us give the definitions
necessary to deal with formally real function fields $F$ over real closed fields $R$.
By
\[
M(F)
\]
we will denote the set of all \bfind{$\R$-places} of $F$, that is, places $\xi$ of $F$ with
residue field $F\xi
\subseteq\R$. These are exactly (up to equivalence of places) the places associated with the natural valuations of
the orderings on $F$, where the natural valuations of an ordered field $(F,<)$ is the finest valuation compatible
with that ordering. In particular, every real closed field $R$ has a unique $\R$-place $\xi_R\,$, which we will
call its \bfind{natural $\R$-place}.

Instead of the set $S(F|R)$ of all places of $F$ that are trivial on $R$, we are rather 
interested in the set
\[
M(F|R)\>=\> \{\lambda\in S(F|R)\mid F\lambda =R\}\>.
\]
of \bfind{$R$-rational places}. The new object we study in this paper is the set
\[
M_R(F) \>:=\> \{\xi_R \circ\lambda \mid \lambda\in M(F|R)\}\>\subseteq\>S(F|R\,;\,\xi_R)
\]
of all $\R$-places of $F$ that factor over $\xi_R\,$. Theorem~\ref{MKPZr} implies
that for every $\R$-place $\xi$ of $F$ there is an
$\R$-place $\xi'$ of $F$ that factors over $\xi_R$ and is ``very close to $\xi$''.

\pars
\begin{remark}
Note that we usually do not identify equivalent real places. However, here any two equivalent
places in $M_R(F)$ are equal since their residue fields are equal to the archimedean real closed
field $R\xi_R\subseteq\R$ which does not allow any nontrivial order preserving embedding in $\R$.
Also in $M(F|R)$, by its definition as a subset of 
$S(F|R)$, equivalent places are equal. Hence $M(F|R)$ is in general smaller than the set of all
$R$-rational places of $F$, but every such place is equivalent to a place in $M(F|R)$. As we are
interested in the compositions of $R$-rational places with the natural $\R$-place $\xi_R$ of $R$, 
this constitutes no loss of information. Indeed, assume that $\lambda_1$ and $\lambda_2$ are
equivalent $R$-rational places, and write $\lambda_2=\sigma\circ\lambda_1$ for some isomorphism 
$\sigma$. As $\lambda_2$ is assumed to be $R$-rational, $\sigma$ must be an automorphism of $R$.
Since $R$ is real closed, it is also order preserving. As $\cO_R:=\cO_{\xi_R}$ is the convex hull
of $\Q$ in $R$ and $\Q$ is left elementwise fixed by $\sigma$, it follows that 
$\sigma\cO_R=\cO_R$. This implies that $\xi_R$ and $\xi_R\circ\sigma$ are equivalent, and with 
the same argument as before, we find that they are equal. Thus, $\xi_R\circ\lambda_1$ and 
$\xi_R\circ\lambda_2=\xi_R\circ\sigma\circ\lambda_1$ are equal.
\end{remark}


\parm
Theorem~\ref{MKPZr} is essential for the description of the relation between the sets $M(F|R)$,
$M_R(F)$ and $M(F)$. It will be applied to formally real function fields $F$ over
arbitrary real closed fields $R$ where we address the following issues.

\parm
The set $M(F)$, its subset $M_R(F)$ and the set $M(F|R)$ carry natural topologies. The topology of $M(F)$ as
described by Dubois in \cite{D} is compact and Hausdorff; we will denote it by $\TopM(F)$. It is a quotient
topology of the space of orderings with the Harrison topology. Its basic open sets are
\[
U(f_1,...,f_m)\>:=\> \{\xi \in M(F)\mid \xi(f_i)>0 \mbox{ for } 1\leq i\leq m\}
\]
where  $f_1,...,f_m$ lie in the \bfind{real holomorphy ring} $H(F)$ of the field $F$. 

The concept of the real holomorphy ring $H(L)$ of any formally real field $L$ is due to Dubois, 
cf.~loc.~cit.. It is defined to be the intersection of all real valuation rings of $L$; it is 
equal to the intersection of the
valuation rings of all $\R$-places of $L$, and $L$ is its field of fractions. 

The holomorphy ring $H(L)$ turns out to be a Pr\"ufer ring as the localizations at its prime
ideals are exactly the real valuation rings of $L$. In particular, a valuation ring of $L$ is 
a real valuation ring if and only if it contains $H(L)$. The basic theory of the real 
holomorphy ring is developed in \cite{Be,Sch1}; consult \cite{Fo} for the general theory of
Pr\"ufer rings. Note that if $R$ is a real closed field, then $H(R)=\cO_R\,$.

In this paper, we will also be dealing with various relative real holomorphy rings in $L$ which are defined as the intersections of some family of real valuation rings. These relative real holomorphy rings are overrings of the (absolute) real holomorphy ring $H(L)$. Hence, by the general theory of Pr\"ufer rings they are Pr\"ufer rings as well. 

When we speak of the topological space $M(F)$, we will always refer to $\TopM(F)$, and the subset $M_R(F)
\subseteq M(F)$ will always carry the subspace topology.
So far, the topological space $M_R(F)$ has not found any attention in the literature. Yet, for our present
study it is highly relevant. In particular, Proposition~\ref{dense} will
show that $M_R(F)$ is dense in $M(F)$, which is a very important fact.

\pars
In an analogous way, a topology $\TopM(F|R)$ on $M(F|R)$ will be introduced. It is then shown in
Theorem~\ref{topspaces} that the mapping
\begin{equation}                              \label{MFRtoMRF}
\iota_{F|R}: M(F|R)\rightarrow M(F), \lambda \mapsto \xi_R\circ\lambda
\end{equation}
is a topological embedding with image $M_R(F)$. In the same theorem, it is shown that all three
topological spaces have no isolated points.

Similar to the space $M_R(F)$, the space $M(F|R)$ has found little, if any, attention in real algebraic geometry.
It was passed by in favour of stronger topological spaces, see e.g.\ \cite{Sch}. In the concluding section of this
paper we re-address these three topological spaces by invoking the theory of real spectra of rings, a cornerstone
of modern real algebraic and semi-algebraic geometry, see \cite{BCR}. As a surprising application we derive that
the space $M(F)$, where $F$ is a formally real function field over any real closed field, admits only finitely many
connected components.


\parm
So far, various authors have already studied the following \bfind{relative real holomorphy rings}
\[
H(F|R) \>:=\> \{a\in F\mid \xi(a)\ne\infty \mbox{ for all real places } \xi\in S(F|R)\}
\]
for function fields $F$ over real closed fields $R$, and its extensions
\[
H(F|R) D
\]
(the smallest subring of $F$ containing $H(F|R)$ and $D$), where $D$ is a finitely generated $R$-algebra inside
$F$, see \cite{BCR, BS, BuKu, JR, KS, K1, K2, KP, Sch, Sch1, Sch2}. Model theory or algebraic geometry or a
combination of both theories have been used. Common to all of these approaches is that they use the fact that $F$
admits many smooth models (projective or real complete affine
ones), which in turn allows to study the behaviour of the elements in $F$ as functions on the set $M(F|R)$.

In the case of a non-archimedean real closed base field $R$, this relationship seems to get lost once one turns
to the absolute real holomorphy ring $H(F)$ in place of $H(F|R)$. However, using the set $M_R(F)$
of all $\R$-places of $F$ that factor over the natural $\R$-place of $R$, we are able to prove representations for
$H(F)$ and related rings that still retain the geometric flavour; see Section~\ref{sectHr}.

Theorem~\ref{MKPZr} allows much wider application to all composite places which factor over places in $M(F|R)$. It
is this strength that allows to broadly extend previous results on relative real holomorphy rings. In fact, we
can include the class of rings  $H(F)D$ where $D$ is a general finitely generated ring extension over any real
valuation ring $B$ of the base field $R$.

\mn
%
%
\section{Proof of Theorems~\ref{MKPZ} and~\ref{MKPZr} and the related propositions}           \label{sectprac}
We will need the following fact, which has been shown in \cite[Theorem~1.1]{[BK]}:
\begin{proposition}                         \label{trdegmi}
Let $L|K$ be an extension of finite transcendence degree, and $v_\xi$ a nontrivial valuation on
$L$ with associated place $\xi$. If $v_\xi L/v_\xi K$ is not a torsion group or $L\xi|K\xi$ is
transcendental, then $(L,v_\xi)$ admits an immediate extension of infinite transcendence degree.
\end{proposition}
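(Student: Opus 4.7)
The plan is to construct an immediate extension of $(L,v_\xi)$ of infinite transcendence degree iteratively, by repeated application of Kaplansky's theorem on immediate extensions generated by pseudo-Cauchy sequences of transcendental type.

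First, one extracts a single transcendental witness $t\in L$ from the hypothesis: if $v_\xi L/v_\xi K$ is not torsion, choose $t\in L^{\times}$ with $v_\xi(t)$ of infinite order modulo $v_\xi K$; if $L\xi|K\xi$ is transcendental, choose $t\in\cO_\xi$ with $t\xi$ transcendental over $K\xi$. In either case the restriction of $v_\xi$ to $K(t)$ is a standard Gauss-type valuation: one has $v_\xi K(t)=v_\xi K\oplus\Z v_\xi(t)$ with $K(t)\xi=K\xi$ in the first case, and $v_\xi K(t)=v_\xi K$ with $K(t)\xi=K\xi(t\xi)$ in the second. Such Gauss extensions are not spherically complete, and concrete pseudo-Cauchy sequences of transcendental type are readily available, e.g.\ partial sums $a_n=\sum_{k\leq n}c_kt^{e_k}$ with $c_k\in K$ and a suitably strictly increasing sequence of exponents in the first case, and analogous partial sums with $c_k$ of strictly decreasing residue-valuation in the second.

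The crux is to arrange for the pseudo-Cauchy sequence $(a_n)$ to have no limit in $L$. Here the hypothesis that $\trdeg L|K$ is finite is essential: since $L$ is finitely generated over $K(t)$, the potential pseudo-limits in $L$ matching $(a_n)$ are controlled, and a diagonal choice of the coefficients $c_k$ produces a sequence that avoids all of them while remaining of transcendental type. By Kaplansky's theorem, $(a_n)$ then acquires a pseudo-limit $s_1$ in some immediate extension $L(s_1)|L$ with $s_1$ transcendental over $L$. Replacing $L$ by $L(s_1)$ --- which still has finite transcendence degree over $K$, still contains $t$, and still has unchanged value group and residue field relative to those of $K$ --- one repeats the construction to obtain $s_2$ transcendental over $L(s_1)$ in a further immediate extension. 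Iterating yields an algebraically independent sequence $s_1,s_2,\ldots$ such that $L(s_1,s_2,\ldots)$ is an immediate extension of $L$ of infinite transcendence degree.

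The main obstacle will be the avoidance step in the third paragraph: executing the diagonal construction while simultaneously verifying Kaplansky's transcendental-type criterion, namely that for every polynomial $f$ over the ambient field the values $v_\xi(f(a_n))$ eventually stay below the breadths of the sequence. The finiteness of $\trdeg L|K$, together with the explicit Gauss-type structure of $v_\xi$ on $K(t)$, is what makes this combinatorial control feasible; without the transcendence-degree bound, one could not rule out that every pseudo-limit of a sequence drawn from $L$ is already algebraic over $L$, which would preclude the construction of a transcendental immediate extension.
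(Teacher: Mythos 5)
The paper itself gives no proof of this proposition: it is quoted verbatim from \cite[Theorem~1.1]{[BK]}, so there is no internal argument to compare against. Judged on its own terms, your proposal has the right skeleton. The reduction to a single step (produce one immediate extension $L(s)|L$ with $s$ transcendental over $L$, via a pseudo-Cauchy sequence of transcendental type without a limit in $L$) followed by iteration is sound, since an immediate extension changes neither $v_\xi L/v_\xi K$ nor $L\xi|K\xi$ and keeps the transcendence degree over $K$ finite, and the union of the resulting chain is immediate over $L$. The extraction of the witness $t$ and the description of the Gauss-type structure of $v_\xi$ on $K(t)$ are also correct.

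The gap is the step you yourself flag as the crux, and it is not a technicality but the entire content of the theorem. To invoke Kaplansky you must exhibit a pseudo-Cauchy sequence $(a_n)$ in $L$ that has no pseudo-limit in $L$ and is of transcendental type \emph{over $L$}, meaning that $v_\xi(f(a_n))$ is eventually fixed for \emph{every} $f\in L[X]$, not merely for polynomials over $K(t)$. A sequence of order type $\omega$ offers only countably many coefficients and exponents to adjust, whereas the constraints to be avoided are indexed by $L$ and by $L[X]$, which are in general uncountable; the hypothesis $\trdeg L|K<\infty$ imposes no cardinality bound on $L$, so a ``diagonal choice'' cannot dispose of these constraints one at a time. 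Note also that naively chosen sums already fail over $K(t)$ itself: the partial sums $\sum_{k\leq n}t^k$ form a pseudo-Cauchy sequence with limit $1/(1-t)\in K(t)$, hence of algebraic type; and non-spherical-completeness of $(K(t),v_\xi)$ only yields a sequence without limit in $K(t)$, which may well acquire a limit, or become of algebraic type, over the larger field $L$. What is actually required is a structural argument showing that the Gauss-valuation shape of $v_\xi$ on $K(t)$ together with the bound $\trdeg L|K(t)<\infty$ forces suitably chosen (lacunary) sequences to remain of transcendental type over all of $L$; carrying this out is precisely what occupies the Blaszczok--Kuhlmann paper. As written, your argument assumes the theorem's hardest point.
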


\pars
The proofs of our central theorems and propositions are adaptations of the proof of the Main Theorem in \cite{KP},
but instead of the Ax-Kochen-Ershov Theorem used there we will have to use other transfer
principles. Namely, we will need analogues for
algebraically closed fields, algebraically closed fields with valuation, ordered real closed
fields, and ordered real closed fields with compatible valuation. We also include a result on
divisible ordered abelian groups that is analogous to the one on algebraically closed (ordered) 
fields.
\begin{theorem}                                        \label{realexcl}
1) \ In the language of rings, an algebraically closed field $K$ is existentially closed in every
extension field $F$.
\sn
2) \ In the language of rings with a relation symbol for a valuation, an algebraically closed
nontrivially valued field $K$ is existentially closed in every valued extension field $F$.
\sn
3) \ In the language of rings with a relation symbol for an ordering, a real closed field $R$ is
existentially closed in every ordered extension field~$F$.
\sn
4) \ In the language of rings with relation symbols for an ordering and a valuation, a real 
closed field $R$ with nontrivial compatible valuation is existentially closed in every ordered
extension field $F$ equipped with a compatible valuation which extends the valuation of $R$.
\sn
5) \ In the language of groups with a relation symbol for an ordering, a nontrivial divisible
ordered abelian group $\Gamma$ is existentially closed in every ordered abelian group extension
$\Delta$. 
\end{theorem}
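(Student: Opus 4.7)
The plan is to derive each of the five assertions from the model completeness of the corresponding first-order theory $T$ by a uniform two-step argument. First, embed the extension object (the field $F$, the valued field $(F,v)$, the ordered field $(F,<)$, the ordered valued field $(F,<,v)$, or the ordered abelian group $\Delta$) as an $L$-substructure into a larger structure $F'$ (respectively $\Delta'$) which still contains $K$ (respectively $\Gamma$) as a substructure and is itself a model of $T$. Second, invoke model completeness of $T$ to conclude that the inclusion $K\hookrightarrow F'$ is elementary. Existential formulas with parameters from $K$ are preserved upward from the given extension into $F'$ under substructure inclusion; by elementarity they then reflect back to $K$, giving existential closedness.

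The required theorems are all classical. Part 1) uses model completeness of the theory of algebraically closed fields (Tarski), and we take $F'$ to be the algebraic closure of $F$. Part 2) uses A.~Robinson's quantifier elimination for the theory of nontrivially valued algebraically closed fields in the language with a relation symbol for the valuation; for $F'$ we take any algebraic closure of $F$ equipped with an extension of the valuation of $F$, and nontriviality of this extension is automatic because the valuation is already nontrivial on $K$. Part 3) uses Tarski's model completeness of the theory of real closed fields, with $F'$ the real closure of $(F,<)$. Part 5) uses A.~Robinson's model completeness of the theory of nontrivial divisible ordered abelian groups, with $\Delta':=\Delta\otimes_{\Z}\Q$ the divisible hull of $\Delta$, into which $\Delta$ embeds as an ordered subgroup and which contains $\Gamma$ since $\Gamma$ is already divisible.

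The main obstacle lies in part 4). Here the needed result is model completeness for real closed fields with a nontrivial convex valuation, which is provided by the theorem of Cherlin--Dickmann on real closed valued fields. The step of embedding into a model of $T$ requires producing a triple $(F',<',v')$ where $F'$ is the real closure of $(F,<)$, where $<'$ is the unique ordering on $F'$ extending $<$, and where $v'$ is the extension of $v$ to $F'$ whose valuation ring is convex with respect to $<'$; concretely, one may take the integral closure of $\cO_v$ in $F'$. The compatibility of $v$ with $<$ guarantees the existence of such a $v'$, its value group is torsion over $v_vF$ so nontriviality of $v'$ is inherited from that of the valuation on $R$, and $(R,<,v)$ is a substructure of $(F',<',v')$ in the relevant language. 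Once these algebraic preliminaries are in place and the Cherlin--Dickmann theorem is cited in the appropriate language, the argument proceeds exactly as in parts 1), 2), 3) and 5).
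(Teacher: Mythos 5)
Your proposal is correct and follows essentially the same route as the paper: embed the given extension into a model of the relevant model-complete theory (algebraic closure, real closure, divisible hull), note that the base structure sits inside it as an elementary substructure, and transfer existential sentences upward and then back down; for part 4) the paper likewise invokes the model completeness of real closed fields with a compatible (convex) nontrivial valuation, citing Delzell--Prestel where you cite Cherlin--Dickmann. One small caveat: the convex extension of $v$ to the real closure is most safely realized as the convex hull of $\cO_v$ with respect to the unique ordering of $F\rc$ rather than as the integral closure of $\cO_v$ (which is in general only the intersection of all extensions and need not be a valuation ring), but this aside does not affect the validity of your argument.
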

\begin{proof}
1): \ Take an algebraic closure $F\ac$ of $F$. By the model completeness of the theory of
algebraically closed valued fields (see \cite{RO}), $F\ac$ is an elementary extension of $K$ in
the language of rings. Every existential
sentence in this language with parameters from $K$ that holds in $F$
also holds in $F\ac$, and by what we just have stated, it then also holds in $K$. This proves 
that $K$ is existentially closed in $F$ in this language.
\sn
2): \ Take an algebraic closure $F\ac$ of $F$ together with some extension of the valuation. By
Abraham Robinson's theorem on the model completeness of the theory of algebraically closed
nontrivially valued fields, see \cite[Theorem 3.4.21]{RO}, $F\ac$ is an
elementary extension of $K$ in the language of rings with a relation symbol for a valuation. The 
remainder of the argument is as in the proof of part 1).
\sn
3): \ Take a real closure $F\rc$ of $F$ together with the corresponding extension of the 
ordering. Then the ordering on $F\rc$ extends the unique ordering of the real closed field $R$. 
By \cite[Theorem 4.5.1]{DP}, $F\rc$ is an elementary extension of $R$ in the language of rings
with a relation symbol for an ordering. Now our assertion follows as in the proof of part 1), 
with $F\rc$ and $R$ in place of $F\ac$ and $K$, respectively.
\sn
4): \ Take a real closure $F\rc$ of $F$ together with the corresponding extensions of the 
ordering and the compatible valuation of $F$. Again, the ordering on $F\rc$ extends the unique
ordering of the real closed field $R$. As the
compatible valuation on $F\rc$ extends the one of $F$, which in turn extends the one of $R$, it also extends the
one of $R$. By \cite[Corollary 4.5.4]{DP} and the fact that the ordering is definable in a real closed field in
the language of rings, $F\rc$ is an elementary extension of $R$ in the language of rings with relation symbols
for an ordering and a valuation. Now our assertion follows as in the proof of part 3).
\sn
5): \ Take any divisible hull $\tilde\Delta$ of $\Delta$. By Abraham Robinson's theorem on the
model completeness of the theory of nontrivial divisible ordered abelian groups, see 
\cite[Theorem 3.1.13]{RO}, $\tilde\Delta$ is an elementary extension of $\Gamma$
in the language of groups with a relation symbol for an ordering. The 
remainder of the argument is as in the proof of part 1).
\end{proof}

\pars
Further, we will need a generalization of Theorem~23 of \cite{[K]}.
\begin{theorem}                          \label{exrp}
Let $F|K$ be an algebraic function field and choose $\Gamma$ as in Theorem~\ref{MKPZ}. Take any 
nonzero elements $a_1,\ldots,a_m\in F$. Then there are infinitely many (nonequivalent) places 
$\lambda\in S(F|K)$ such that $F\lambda|K$ is finite, $v_\lambda F\subseteq\Gamma$ with 
$(\Gamma:v_\lambda F)$ finite, and $\lambda(a_i)\ne 0,\infty$ for $1\leq i\leq m$.
\pars
If in addition $K$ is existentially closed in $F$, then these places can be chosen to be 
$K$-rational with $v_\lambda F=\Gamma$.
\end{theorem}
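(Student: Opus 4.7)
The plan is to adapt the construction of Theorem~23 of \cite{[K]}, generalizing it in two directions: to accommodate an arbitrary total ordering on $\Gamma=\Z^r$ and to impose the avoidance condition $\lambda(a_i)\ne 0,\infty$. The existentially closed addendum will then follow by a model-theoretic transfer using Theorem~\ref{realexcl}.

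First I would fix a $\Z$-basis $\varepsilon_1,\ldots,\varepsilon_r$ of $\Gamma$ with each $\varepsilon_i>0$; such a basis exists for every total ordering on $\Z^r$ since the positive cone generates the full group. Next, I would choose a transcendence basis $t_1,\ldots,t_s$ of $F|K$, translated so that each $a_i$ has a minimal polynomial over $K(t_1,\ldots,t_s)$ with nonzero constant term, and set $N:=[F:K(t_1,\ldots,t_s)]!$. On $K(t_1,\ldots,t_r)$ I would then define the Gauss-type valuation $v_0$ by $v_0(t_i):=N\varepsilon_i$, with the value of a polynomial being the minimum of the values of its monomials; this is well-defined since the $N\varepsilon_i$ are $\Z$-linearly independent, and yields value group $N\Gamma\subseteq\Gamma$ and residue field $K$. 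For $r<s$, Proposition~\ref{trdegmi} applied to the extension $K(t_1,\ldots,t_r)|K$ produces an immediate extension of $(K(t_1,\ldots,t_r),v_0)$ of infinite transcendence degree; inside it I would pick $s-r$ algebraically independent elements and identify them with $t_{r+1},\ldots,t_s$, which extends $v_0$ to a valuation on $K(t_1,\ldots,t_s)$ with value group still $N\Gamma$ and residue field still $K$.

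Extending $v_0$ to a place $\lambda$ on $F$, the fundamental inequality forces $F\lambda|K$ to be finite and, via the divisibility of $N$ by the ramification index $[v_\lambda F:N\Gamma]$, forces $v_\lambda F\subseteq\Gamma$ with $(\Gamma:v_\lambda F)$ finite. The nonzero-constant-term condition on the $a_i$ yields $\lambda(a_i)\ne\infty$, and a further generic translation $t_i\mapsto t_i-c_i$ with $c_i$ in $K$ (or in a large enough finite extension of $K$ when $K$ is finite) arranges $\lambda(a_i)\ne 0$. Infinitely many nonequivalent such places are then produced by varying the basis $\varepsilon_1,\ldots,\varepsilon_r$, for instance by replacing $\varepsilon_1$ with $\varepsilon_1+k\varepsilon_2$ for distinct $k\in\N$ when $r\geq 2$, or by varying the pseudo-limits defining the $t_{r+j}$ when $r<s$; pairwise inequivalence follows by checking that the induced valuation rings on $F$ are pairwise distinct.

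Finally, for the existentially closed case, the place just constructed witnesses, in a suitable extension field, an existential sentence over $K$ in the appropriate language that asserts the existence of a $K$-rational place with $v_\lambda F=\Gamma$ and $\lambda(a_i)\ne 0,\infty$; applying part~1) of Theorem~\ref{realexcl} (or its valued/ordered variants, which will be relevant for Theorem~\ref{MKPZr}) transfers this existence back to $K$ itself. I expect the main obstacle to be arranging $v_\lambda F\subseteq\Gamma$ rather than merely a finite-index overgroup: the scaling by $N$ is tailored to absorb the ramification arising from the algebraic extension $F|K(t_1,\ldots,t_s)$, and verifying that the pseudo-limit embedding in the case $r<s$ does not inadvertently enlarge the value group is the most delicate point of the construction.
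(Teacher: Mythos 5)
Your overall architecture (a Gauss-type valuation on a transcendence basis of $F$, an immediate extension to absorb $t_{r+1},\ldots,t_s$ via Proposition~\ref{trdegmi}, then a finite extension up to $F$) is a legitimate alternative to the paper's construction, which instead builds a henselian valued field $K_4$ over a finite extension $K_1$ of $K$ and embeds $F$ into it. But there are two genuine gaps. First, your mechanism for $\lambda(a_i)\ne 0,\infty$ does not work: translating the $t_i$ does not change the field $K(t_1,\ldots,t_s)$, hence does not change the minimal polynomial of $a_i$ over it, and in any case every nonzero element has a minimal polynomial with nonzero constant term, so that condition carries no information about the values of the coefficients under your Gauss valuation. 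What actually has to be controlled is this: writing $a_i=g_i(\mbb{t},y)/h_i(\mbb{t})$ for a generator $y$ of $F$ over $K(\mbb{t})$ with irreducible polynomial $f$, one needs a center $(\mbb{t}',y')$ with $f(\mbb{t}',y')=0$, $\frac{\partial f}{\partial Y}(\mbb{t}',y')\ne 0$, $h_i(\mbb{t}')\ne 0$ and $g_i(\mbb{t}',y')\ne 0$, and one needs to force the (non-unique) extension of the valuation from $K(\mbb{t})$ to $F$ to send $y$ to precisely that root $y'$. The paper obtains such a point from the existential closedness of the algebraic closure $K_0$ of $K$ in $K_0.F$ (which simultaneously keeps $F\lambda|K$ finite, since only finitely many elements of $K_0$ are used, and guarantees separable generation in characteristic $p$), and pins down the extension by lifting $y'$ with Hensel's Lemma inside a henselization. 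Your proposal has no substitute for either step.

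Second, the existentially closed addendum cannot be obtained by the transfer you describe: ``there exists a $K$-rational place of $F$ with $v_\lambda F=\Gamma$ and $\lambda(a_i)\ne 0,\infty$'' is not an existential sentence in the language of rings with parameters from $K$, since a place is not a first-order object. The correct use of existential closedness is to transfer the existence of the \emph{point} $(\mbb{t}',y')$ satisfying the polynomial equations and inequations from $F$ down to $K$, and only then to build the place from that point. Moreover, your scaling $v_0(t_i)=N\varepsilon_i$ — which does correctly force $v_\lambda F\subseteq\Gamma$ with finite index via $e\mid N$ — is incompatible with the conclusion $v_\lambda F=\Gamma$ of the second part: it can only yield $N\Gamma\subseteq v_\lambda F\subseteq\Gamma$. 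To get equality one must take $v_0(t_i)=\varepsilon_i$ and show that the extension to $F$ determined by the simple root $y'$ is immediate over $K(\mbb{t})$, which is again a Hensel's Lemma argument. Finally, your production of infinitely many nonequivalent places breaks down when $r=s=1$ (no basis to vary, no pseudo-limits to vary); the paper's inductive device — adjoin to the list elements $a_{m+j}$ with $\lambda_j(a_{m+j})=\infty$ and rerun the construction — handles all cases uniformly.
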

\begin{proof}
We adapt the proof of the lemma on p.~190 of \cite{KP}. In some algebraic closure $F\ac$ of $F$ 
we find an algebraic closure $K_0$ of $K$ and let $F':=K_0.F$ be the field compositum of $K_0$ 
and $F$ inside of $F\ac$. By part 1) of
Theorem~\ref{realexcl}, $K_0$ is existentially closed in $F'$.

Since $K_0|K$ is algebraic, $\trdeg F'|K_0=
\trdeg F|K=s$. The extension $F'|K_0$ is separable and finitely generated, so we can
pick in $F'$ a separating transcendence basis $t_1,\ldots,t_s$ together with an element
$y$ separable algebraic over $K_0(t_1,\ldots,t_s)$ such that
$F'=K_0(t_1,\ldots,t_s,y)$. Take $f\in K_0[t_1,\ldots,t_s,Y]$ to be an
irreducible polynomial of $y$ over $K_0[t_1,\ldots,t_s]$. We write $\mbb{t}=(t_1,\ldots,t_s)$ and
\begin{equation}                                     \label{a_i}
a_i \>=\> \frac{g_i(\mbb{t},y)}{h_i(\mbb{t})} \mbox{\ \ \ for\ \ \ } 1\leq i \leq m\;,
\end{equation}
where $g_i$ and $h_i$ are polynomials over $K_0$, with $h_i(\mbb{t})\ne 0$. Since the elements
$t_1,\ldots,t_s,y$ satisfy
\begin{equation}                                     \label{satsent}
f(\mbb{t},y)=0\>,\; \frac{\displaystyle\partial f}{\displaystyle\partial Y}
(\mbb{t},y)\not= 0\;\mbox{ \ and \ }\; h_i(\mbb{t})\ne 0 \>\mbox{ for } 1\leq i\leq m
\end{equation}
in $F'$, we infer from $K_0$ being existentially closed in $F'$ that there
are $t'_1,\ldots,t'_s,y'$ in $K_0$ such that
\[
f(\mbb{t'},y')=0\>,\; \frac{\displaystyle\partial f}{\displaystyle\partial Y}
(\mbb{t'},y')\not= 0\;\mbox{ \ and \ }\; h_i(\mbb{t'})\ne 0 \>\mbox{ for } 1\leq i\leq m\>.
\]
Now let $K_1$ be the subfield of $K_0$ which is generated over $K$ by the
following elements:\sn
$\bullet \ \ t'_1,\ldots,t'_s, y'\,$,\n
$\bullet \ \ $ the coefficients of $f$, $g_i$ and $h_i$ for $1\leq i\leq m$.
\sn
We note that $K_1$ is a finite extension of $K$. We will now construct an extension $K_4$
of $K_1$ with $K_1$-rational place $\lambda_4$, which will contain an isomorphic copy of $K_1.F$. 
The construction will be done in such a way that the place $\lambda$ induced on $F$ through the
resulting embedding of $F$ in $K_4$ and the place  $\lambda_4$ will satisfy the assertions of 
our theorem.

\pars
We write $\Gamma=\bigoplus_{1\leq i\leq r} \Z\alpha_i$ with $\alpha_i>0$. We adjoin $r$ many
algebraically independent elements $x_1,\ldots,x_r$ to $K_1$ and denote the resulting field
by $K_2\,$. By \cite[Chapter VI,\S10.3, Theorem~1]{Bo} (see also \cite[Lemma~25]{[K]}), there 
is a place
$\lambda_2$ of $K_2$ whose restriction to $K_1$ is the identity, such that $K_2\lambda_2=K_1$ and
$v_{\lambda_2}x_i=\alpha_i$ for $1\leq
i\leq r$, whence $v_{\lambda_2}K_2= \bigoplus_{1\leq i\leq r} \Z\alpha_i=\Gamma$.

\pars
Since $r\geq 1$, Proposition~\ref{trdegmi} shows that $(K_2,\lambda_2)$ admits an immediate extension of
transcendence degree $s-r$. We pick a transcendence basis $x_{r+1},\ldots,x_s$ of this extension and take
$(K_3,\lambda_3)$ to be the immediate subextension which it generates over $(K_2,\lambda_2)$. It follows that
$\lambda_3|_{K_1}= \lambda_2|_{K_1}=\id_{K_1}\,$. We may choose the elements $x_i$ such that
$v_{\lambda_3} x_i>0$, $r+1\leq i\leq s$. We have the same for $1\leq i\leq r$ since all $\alpha_i$ are positive.

\pars
Now we take $(K_4,\lambda_4)$ to be the henselization of $(K_3,\lambda_3)$. Since it is an immediate extension
of $(K_3,\lambda_3)$, which in turn is an immediate extension of $(K_2,\lambda_2)$, we have that $v_{\lambda_4}
K_4 = v_{\lambda_2}K_2 =\Gamma$ and $K_4\lambda_4=K_2\lambda_2=K_1\,$, as well as $\lambda_4|_{K_1}=\id_{K_1}\,$.

\parm
We wish to show that $F$ can be embedded in $K_4$ over $K$. In fact, we find an embedding $\iota$ of $K_1 .F$
over $K_1$ in $K_4$ as follows. We set $t_i^*:=t'_i+x_i\in K_4\,$, $1\leq i\leq s$;
since $v_{\lambda_4} x_i=v_{\lambda_2} x_i=\alpha_i>0$ we have that $\lambda_4(x_i)=0$ and obtain that
$\lambda_4(t_i^*)=t'_i\,$. Using Hensel's Lemma, we lift the simple root $y'$ of $f(\mbb{t}',Y)$ to an element
$y^*\in K_4$ which satisfies $f(\mbb{t}^*,y^*)=0$ and $\lambda_4(y^*)=y'$.

By construction, $t_1^*,\ldots,t_s^*$ are algebraically independent over $K_1\,$, so we obtain the desired
embedding by setting $\iota(t_i)=t_i^*$ and $\iota(y)=y^*$. Then we take $\lambda$ to be the restriction of
$\lambda_4\circ\iota$ to $F$. As $x_i=t_i^*-t'_i\in \iota(K_1.F)$, the value group of $\lambda_4\circ\iota$ on
$K_1.F$ is equal to $\Gamma$ and consequently, $v_\lambda F\subseteq\Gamma$. As $K_1.F|F$ is finite, so is
$\Gamma/v_\lambda F$.

The restriction of $\lambda$ to $K$ is the identity because the same holds for $\lambda_4$ and $\lambda$ is a
restriction of $\lambda_4\circ\iota$. Hence, $\lambda\in S(F|K)$. Further, $F\lambda\subseteq K_4\lambda_4=K_1\,$,
hence $F\lambda|K$ is finite.

We have that $\lambda(t_j)=\lambda_4(\iota(t_j))=\lambda_4(t_j^*)=t_j'$ for $1\leq j\leq s$, and $\lambda(y)=
\lambda_4(\iota(y))=\lambda_4(y^*)=y'$, whence $\lambda(g_i(\mbb{t},y))=g_i(\mbb{t'},y')$ and $\lambda(h_i
(\mbb{t}))=h_i(\mbb{t'})\ne 0$ for $1\leq i\leq m$. Therefore, $\lambda(a_i)\ne \infty$ for all $i$. By including
also $a_i^{-1}$ in the list for each $i$, we obtain in addition that $\lambda(a_i)\ne 0$ for all $i$.

\pars
Now suppose that we have already constructed places $\lambda_1,\ldots,\lambda_k\in S(F|K)$ which are finite on
$a_1,\ldots,a_m\,$ and satisfy all additional assertions. Since $\trdeg F|K\geq 1$ by
our general assumption, but $F\lambda_j|K$ is algebraic, the places $\lambda_j$ are nontrivial.
Hence there are elements $a_{m+j}\in F$ such that $\lambda_j(a_{m+j})=\infty$ for $1\leq j\leq k$. As shown above,
there exists a place $\lambda$ which is finite on $a_1,\ldots,a_{m+k}$ and satisfies all additional assertions.
It follows that $\lambda(a_{m+j})\ne \infty =\lambda_j(a_{m+j})$ and hence
$\lambda$ is not equivalent to $\lambda_j$ for $1\leq j\leq k$. This shows that there are infinitely
many nonequivalent places which satisfy all assertions of the first part of our theorem.

\parm
If $K$ is existentially closed in $F$, then $F|K$ is separable (see \cite[Lemma~5.3]{[K7]}). In
this case, the proof proceeds as above with $K$ in place of $K_0$ and $F$ in place of $F'$. 
We then have that $K_1=K$,
which implies that $F\lambda=K$. We also have that $t_i'\in K$ for all $i$, which yields that $x_i\in
K(\mbb{t}^*)\subseteq \iota(F)$. As a consequence, $\Gamma\subseteq v_\lambda F$, so that $v_\lambda F=\Gamma$.
\end{proof}

\parb
Now we are ready for the proof of Theorem~\ref{MKPZ}.
\sn
{\it Proof of Theorem~\ref{MKPZ}:}
\n
Assume the setting as in the statement of our theorem.
We write $\Gamma=\bigoplus_{1\leq i\leq r} \Z\alpha_i$ with $\alpha_i>0$.
We break our proof into several parts.

\mn
{\bf Part I: We will first assume that $\wp$ is a nontrivial place.}

\pars
As in the proof of Theorem~\ref{exrp} we choose the fields $K_0$ and $F'$, and the elements 
$t_1,\ldots,t_s,y$, $g_i$ and $h_i$ satisfying (\ref{a_i}) and (\ref{satsent}). We consider 
the place 
$\xi$ extended from $F$ to $F'=K_0.F$. Then we take $\xi_0$ to be the restriction of $\xi$ to
$K_0\,$. Note that $\xi_0$ is an extension of $\wp$ and that $F'\xi$ is algebraic over $F\xi$. 
For every $i$ such that $\xi(a_i)\in K\wp$ we can choose $a'_i\in K$ such that
\[
\xi(a_i) \>=\> \wp(a'_i) \>=\> \xi(a'_i)\>.
\]

As an extension of $\wp$, also $\xi_0$ is nontrivial. Therefore, we can apply part 2) of
Theorem~\ref{realexcl} to obtain that $(K_0,\xi_0)$ is existentially closed in $(F',\xi)$. Hence
there exist elements
\[
t'_1\,,\ldots,\,t'_s\,,\, y' \in K_0
\]
such that for $1\leq i \leq m$,
\[\begin{array}{ll}
   (i) & f(\mbb{t}',y') = 0 \mbox{\ \ and\ \ }
         \frac{\displaystyle\partial f}{\displaystyle\partial Y}
         (\mbb{t}',y')\not= 0\>,\\[0.3cm]
   (ii) & g_i(\mbb{t}',y')\ne 0,\; h_i(\mbb{t}') \not= 0\>,\\[0.3cm]
   (iii) & v_{\xi_0} g_i(\mbb{t}',y') \geq v_{\xi_0} h_i(\mbb{t}') \mbox{ \ \ if \ } a_i\in \cO_\xi\>,\\[0.3cm]
   (iv) & v_{\xi_0} g_i(\mbb{t}',y') > v_{\xi_0} h_i(\mbb{t}') \mbox{ \ \ if \ } a_i\in \cM_\xi\>,\\[0.3cm]
   (v) & v_{\xi_0}\left(\frac{\displaystyle g_i(\mbb{t}',y')} {\displaystyle
         h_i(\mbb{t}')} \,-\, a'_i\right) \> >\> 0 \mbox{ \ \ if \ } \xi(a_i)\in K\wp \>,
\end{array}\]
since these assertions are true in $F'$ for $\mbb{t},y$ in place of $\mbb{t}',y'$ and $v_\xi$ 
in place of $v_{\xi_0}\,$.

\pars
Now let $K_1$ be as in the proof of Theorem~\ref{exrp}, and let $\wp_1$ denote the restriction 
of $\xi_0$ to $K_1\,$. As before, $K_1$ is a finite extension of $K$ and $\wp_1$ is an extension 
of $\wp$. The extension $K_4$ of $K_1$ with the $K_1$-rational place $\lambda_4$ and the 
embedding $\iota$ of $F'$ over $K_1$ in $K_4$ are constructed as in the proof of
Theorem~\ref{exrp}. As before, we obtain $\lambda\in S(F|K)$ with $v_\lambda F\subseteq\Gamma$,
$(\Gamma:v_\lambda F)$ finite and $F\lambda|K$ finite. We take $\wp'$ to be the restriction of 
$\wp_1$ to $F\lambda$. Then assertions {\it (a)} and {\it (b)} of our theorem are satisfied.

We still have to check assertions {\it (c)}, {\it (d)}, {\it (e)} and {\it (f)} on the elements $a_i\,$. Since
$\lambda_4(t_i^*)=t'_i$ and $\lambda_4(y^*)=y'$, we have that
\[
\lambda(g(\mbb{t},y)) \>=\> \lambda_4(\iota(g(\mbb{t},y))) \>=\> \lambda_4(g(\mbb{t^*},y^*)) \>=\> g(t',y')
\]
for every polynomial $g\in K_1[X_1,\ldots,X_s,Y]$. Consequently, using that $h_i(t')\ne 0$ by {\it (ii)},
\[
\lambda(a_i)\>=\>\lambda\left(\frac{g_i(\mbb{t}^*,y^*)}{h_i(\mbb{t}^*)}\right)\>=\>
\frac{g_i(\mbb{t}',y')}{h_i(\mbb{t}')}\>.
\]
Hence {\it (ii)}, {\it (iii)} and {\it (iv)} imply that {\it (c)}, {\it (d)} and {\it (f)} hold (note that
$\lambda(a_i)\in \cO_{\wp'}$ implies that $a_i\in \cO_{\wp'\circ\lambda}=\cO_{\xi'}$ and that $\lambda(a_i)\in
\cM_{\wp'}$ implies that $a_i\in \cM_{\wp'\circ\lambda}=\cM_{\xi'}$). If $\xi(a_i)\in K\wp$, then by {\it (v)},
\begin{eqnarray*}
\xi'(a_i) &=& \wp'(\lambda(a_i))\>=\>\wp'\left(\frac{g_i(\mbb{t}',y')}{h_i(\mbb{t}')}\right) \>=\>
\xi_0\left(\frac{g_i(\mbb{t}',y')}{h_i(\mbb{t}')}\right) \\
&=& \xi_0\left(a'_i\right)\>=\>\xi\left(a'_i\right)\>=\>\xi\left(a_i\right)\>,
\end{eqnarray*}
which shows that also assertion {\it (e)} holds.

\bn
{\bf Part II: We will now assume that $\wp$ is trivial.} In this case we can assume that 
$\wp=\id_K$ since otherwise we replace $\xi$ by $\xi\circ\sigma$ where $\sigma$ is any
monomorphism on $F$ which extends $\wp^{-1}$. We then also choose every extension $\wp'$ of 
$\wp$ to be the identity. Further, we have that $\cO_\wp=K$ and $\cM_\wp=\{0\}$.

\sn
{\bf Part II.1: First we discuss the case where the place $\xi$ is trivial.} Then $\xi$ is a monomorphism and we
may assume
that $\xi|_F=\id_F$ since otherwise, we apply the following proof to $F\xi$ and $\xi(a_i)$ in place of $F$ and
$a_i$ and then replace the places $\lambda$ of $F\xi$ that we obtain by the places $\lambda\circ\xi$ of $F$.

Since $\xi$ is trivial, we have that $\cO_\xi=F$ and $\cM_\xi=\{0\}$. Hence assertion {\it (d)} of our
theorem is satisfied for every $\lambda\in S(F|K)$ because $a_i\in \cM_\xi$ would imply that $a_i=0$, contrary
to our choice of the elements $a_i\,$. Also assertion {\it (e)} is always satisfied, as the condition
$\xi(a_i)\in K\wp$ means that $a_i\in K$, whence $\xi'(a_i)=\wp(a_i)=a_i=\xi(a_i)$ as $\lambda$ and $\wp$ are
trivial on $K$.

For any choice of finitely  many elements $a_1,\ldots,a_m\in F$, Theorem~\ref{exrp}
shows the existence of infinitely many places $\lambda$ which satisfy assertions {\it (a)} and {\it (b)}
as well as $\lambda(a_i)\ne 0,\infty$ for $1\leq i\leq m$. The latter implies that they also satisfy assertions
{\it (c)} and {\it (f)}.

\mn
{\bf Part II.2: Now we deal with the case of $\xi$ being nontrivial.}
\sn
{\bf Part II.2a:} We wish to satisfy assertions {\it (d)} and {\it (e)}, but not necessarily assertion {\it (f)}.

\pars
Assume first that $\trdeg F|K=1$. We claim that $\lambda=\xi$ satisfies assertions {\it (a)}--{\it (e)}. Indeed,
{\it (a)} and {\it (b)} are satisfied since $\xi$ is a nontrivial place of the function field $F|K$ of
transcendence degree 1 which is trivial on $K$. As indicated before, we choose $\wp'$ on $F\xi$ to be the
identity, so we have that $\xi'=\xi$, $\cO_{\wp'}=F\xi$ and $\cO_{\xi'}=\cO_\xi\,$. Hence if $a_i\in\cO_\xi\,$,
then $\lambda(a_i)=\xi(a_i)\in \cO_{\wp'}$ and $a_i\in\cO_{\xi'}\,$, that is, $\lambda$ also satisfies assertion
{\it (c)}. Likewise, we have that $\cM_{\wp'}=\{0\}$ and $\cM_{\xi'}=\cM_\xi\,$. Hence if $a_i\in\cM_\xi\,$, then
$\lambda(a_i)=\xi(a_i)=0\in \cM_{\wp'}$ and $a_i\in\cM_{\xi'}\,$, that is, $\lambda$ also satisfies assertion
{\it (d)}. Further, $\xi'(a_i)=\xi(a_i)$, so also assertion {\it (e)} is satisfied.

\pars
Assume now that $\trdeg F|K>1$. Since $\xi$ is nontrivial, there is some $x\in F$ such that 
$\xi(x)=0$. We denote the $x$-adic place of $K(x)$ by $\xi_x\,$. We apply the already proven part 
of our theorem to the function field $F|K(x)$, with $\wp$ replaced by $\xi_x\,$, to obtain a 
place $\lambda'\in S(F|K(x))$ such that, with $\xi_x$ extended to $F\lambda'$ and 
$\lambda:=\xi_x\circ\lambda'\in S(F|K)$,
\sn
(a$'$)\ \ $F\lambda'$ is a finite extension of $K(x)$,
\sn
(b$'$)\ \ $v_{\lambda'}F\subseteq\Gamma'$ with $(\Gamma':v_{\lambda'} F)$ finite,
\sn
(c$'$) \ if $a_i\in \cO_\xi\,$, then $\lambda'(a_i)\in \cO_{\xi_x}$ and, consequently, 
$a_i\in \cO_{\lambda}\,$,
\sn
(d$'$) \ if $a_i\in \cM_\xi\,$, then $\lambda'(a_i)\in \cM_{\xi_x}$ and, consequently, 
$a_i\in \cM_{\lambda}\,$,
\sn
(e$'$) \ if $\xi(a_i)\in K(x)\xi_x=K$, then $\lambda(a_i) = \xi(a_i)$ for $1\leq i \leq m$,
\sn
(f$\,'$) \ $\lambda'(a_i)\ne 0,\infty$ \ for $1\leq i \leq m$.
\mn
Now (a$'$) implies that $F\lambda=(F\lambda')\xi_x$ is a finite extension of $K(x)\xi_x=K$, so
assertion
{\it (a)} of our theorem is satisfied. Since $\trdeg F\lambda'|K=\trdeg K(x)|K=1$, we obtain that
$v_{\xi_x}(F\lambda')=\Z$, so that $v_{\xi_x} F$ is the lexicographic product of $v_{\lambda'}F$
with $\Z$, which by (b$'$) is a subgroup of $\Gamma$, with $(\Gamma:v_{\lambda} F)$ finite.

To see that assertions {\it (c)} and {\it (d)} of our theorem are satisfied, we recall that we
take the extension $\wp'$ of the trivial place $\wp$ of $K$ to $F\lambda$ to be the identity.
Consequently, $\cO_{\lambda}=\cO_{\wp'\circ\lambda}$ and $\cM_{\lambda}=\cM_{\wp'\circ\lambda}$.
To see that assertion {\it (e)} of our theorem
is satisfied, we use statement (e$'$) above and observe that $\xi'=\wp'\circ\lambda=\lambda$.

\mn
{\bf Part II.2b:} We wish to satisfy assertion {\it (f)}, but not necessarily assertions 
{\it (d)} and {\it (e)}. In the present setting where $\wp$ is trivial, assertion {\it (c)}
follows directly from assertion {\it (f)}. Hence as a matter of fact,
the given place $\xi$ does not play any role. Therefore we obtain infinitely many places 
$\lambda$ with the required properties by just applying Theorem~\ref{exrp}.
\qed

\mn
For the next proof, we will use a version of the Ax-Kochen-Ershov Theorem for 
\bfind{tame fields} as presented in \cite{[K7]}. These are henselian 
valued fields $(K,v)$ whose 
absolute ramification field is algebraically closed. Here, the \bfind{absolute ramification field}
of $(K,v)$ with respect to an extension of the valuation $v$ to the separable algebraic closure
$K\sep$ of $K$ is the ramification field of the extension $(K\sep|K,v)$.

Further, we need the following notation.
If $E$ is any field, we will denote by $E^{1/p^{\infty}}$
its perfect hull (which is equal to $E$ if $\chara E=0$).
\sn
{\it Proof of Proposition~\ref{MKPZa1}:}
\n
{\bf We will first assume that $\wp$ is nontrivial}. We modify the proof of Theorem~\ref{MKPZ}
for this case as follows. We take $(L,\xi)$ to be a maximal algebraic extension of $(F,\xi)$ 
with the property of having
$(F\xi)^{1/p^{\infty}}=(K\wp)^{1/p^{\infty}}$ as its residue field. Then $(L,\xi)$
will have a divisible value group. For the construction of such an extension, see Section~2.3 of
\cite{[K5]}. Further, $(L,\xi)$ is algebraically maximal (i.e., does not admit nontrivial
immediate algebraic extensions) and therefore, it is a tame field by \cite[Theorem~3.2]{[K7]}.

This time we take $K_0$ to be the relative algebraic closure of $K$ in $L$, and $\xi_0$ the restriction of $\xi$
to $K_0\,$; as before, $\xi_0$ is an extension of $\wp$. Since $L\xi=(K\wp)^{1/p^{\infty}}$ is algebraic
over $K\wp$, \cite[Lemma~3.7]{[K7]} shows that $(K_0,\xi_0)$ is a tame field
with $K_0\xi_0=L\xi= (K\wp)^{1/p^{\infty}}$ and $v_{\xi_0}K_0$ equal to the divisible hull of $v_\wp K$.

Since a divisible ordered abelian group is existentially closed in every ordered
abelian group extension by part 5) of Theorem~\ref{realexcl}, and since $\xi_0$ is nontrivial, 
we can apply \cite[Theorem~1.4]{[K7]} to obtain that
$(K_0,\xi_0)$ is existentially closed in $(L,\xi)$. By \cite[Lemma~3.1]{[K7]}, the tame field
$K_0$ is perfect, hence again $K_0.F|K_0$ is separably generated.

From here, the construction proceeds as before. Since $K_1|K$ is finite and $K_1\wp_1$ is
contained in the purely inseparable extension $L\xi$ of $K\wp$, we conclude that $K_1\wp_1$ 
is a finite purely inseparable extension of $K\wp$. Since $\iota(F)\subseteq K_4$, we
have that $F\xi'\subseteq (K_4\lambda_4)\wp_1 = K_1\wp_1\,$. Therefore, $F\xi'|K\wp$ is a
finite purely inseparable extension.

\mn
{\bf Now we assume that $\wp$ is trivial and that $\trdeg F|K>1$}.
By our general assumption on function fields $F|K$, we have that $F\ne K$. Hence if both $\wp$
and $\xi$ are trivial, the condition $F\xi=K\wp$ is never satisfied, and therefore, the assertion
of our proposition is trivially true. Therefore, we now assume that $\xi$ is nontrivial.

\pars
From our assumption that $F\xi=K\wp$ 
it follows that $\xi$ is nontrivial. With the element $x$ chosen as in the corresponding part 
of the proof of Theorem~\ref{MKPZ}, we have that $K(x)\xi_x=K\wp=F\xi$. Then the condition of
our proposition is satisfied for $(K(x),\xi_x)$ in place of $(K,\wp)$. Since $\xi_x$
is nontrivial, from the already proven part of our proposition we infer that in addition to the
results of Theorem~\ref{MKPZ} we can choose $\lambda'$ such that $F(\xi_x\circ\lambda')=
(F\lambda')\xi_x$ is a finite purely inseparable extension of $K(x)\xi_x$. As 
$\xi'=\wp'\circ\xi_x\circ\lambda'= \xi_x\circ\lambda'$ and $K(x)\xi_x=K=K\wp$, this yields 
that $F\xi'|K\wp$ is a finite purely inseparable extension.
\qed

\mn
{\it Proof of Proposition~\ref{MKPZa2}:}
\n
Assume that $(K,\wp)$ is existentially closed in $(F,\xi)$. In the case of nontrivial $\wp$, 
we modify the proof of Theorem~\ref{MKPZ} by setting $K_0=K$. Then we will also have that 
$K_1=K$. The further construction proceeds as in the proof of Theorem~\ref{MKPZ}, yielding a 
place $\lambda$ such that $F\lambda=K$.

In the case of trivial $\wp$, the hypothesis yields that also $\xi$ is trivial, and the
corresponding part of the proof of Theorem~\ref{MKPZ} can be combined with the second
assertion of Theorem~\ref{exrp}. 

In both cases the places $\lambda$ constructed satisfy $F\lambda=K$, which gives us that 
$\wp'=\wp$ and $F\xi'=K\wp$. Further, one shows as in the proof of Theorem~\ref{exrp} that
$v_\lambda F = \Gamma$.
\qed

\mn
{\it Proof of Theorem~\ref{MKPZr}:}
\n
Under the assumptions of Theorem~\ref{MKPZr} we know from part 4) of Theorem~\ref{realexcl} 
that in the language of rings with relation symbols for an ordering and a
valuation, $K$ is existentially closed in $F$. Hence from Proposition~\ref{MKPZa2}, we obtain
places $\lambda$ of $F$ such that in addition to the assertions of Theorem~\ref{MKPZ}, we have
that $F\lambda=R$, $v_{\lambda}F=\Gamma$, $F\xi'=R\wp$ and $\wp'=\wp$.

\pars
To prove assertion {\it (c')}, we assume that $a_i>0$ and $\xi(a_i)\ne 0,\infty$ for some $i$.
This means that $a_i,a_i^{-1}\in\cO_\xi\,$. Hence, in view of assertion {\it (c)} we can choose 
$\lambda$ such that $\lambda(a_i),\, \lambda(a_i^{-1})\in\cO_\wp\,$, so $\xi'(a_i)=
\wp(\lambda(a_i)) \ne 0,\infty$. Since $\lambda(a_i)>0$ and $\wp$ is compatible with the ordering 
on $K$, this implies that $\xi'(a_i)= \wp(\lambda(a_i))>0$. This proves assertion {\it (c')}.

Since $\xi$ is compatible with the ordering on $F$, $\infty\ne\xi(a_i)>0$ implies that $a_i>0$. 
Thus assertion {\it (c')} implies that $\xi'(a_i)>0$.

\parm
It remains to deal with assertion {\it (g)}. We have to show that in all cases where we can get 
$\lambda$ to satisfy assertion {\it (f)}, we can also get it to satisfy assertion {\it (g)}. 
To this end, we replace $F$ by a larger ordered function field in which every positive $a_i$ is 
a square. When $\lambda$ satisfies assertion {\it (f)}, we have that $\lambda(a_i)\ne 0,\infty$.
If $a_i$ is a square, it then follows that also $\lambda(a_i)$ is a nonzero
square, hence positive.
\qed

\mn
{\it Proof of Proposition~\ref{MKPZa3}:}
\n
%
It suffices to show the assertion for the places $\lambda$. This is seen as follows. 
The valuation ring of $\lambda$ is an overring of $\wp'\circ\lambda$ and the
overrings of a valuation ring in a field are linearly ordered by inclusion. 
Hence if $\lambda_1$ and $\lambda_2$ are such that $\wp'\circ\lambda_1$ and $\wp'\circ\lambda_2$
have the same valuation ring, then the valuation rings of $\lambda_1$ and $\lambda_2$ must be
comparable by inclusion. But since $F\lambda_1$ and $F\lambda_2$ are
algebraic over $K$, this implies that the two valuation rings are equal.

\pars
In all cases where the constructed places $\lambda$ satisfy assertion {\it (f)} of our theorem,
the argument given in the proof of Theorem~\ref{exrp} shows the existence of infinitely many
nonequivalent places $\lambda$.

\pars
It remains to prove our assertion in the case where $\wp$ is trivial while $\xi$ is not and 
$\trdeg F|K>1$, and we want the places to satisfy assertions {\it (d)} and {\it (e)}. 
In this case, in the corresponding part of the proof of Theorem~\ref{MKPZ}, we constructed places
$\lambda'$ satisfying assertion (f$\,'$), hence by what we just said, there are infinitely many
nonequivalent such places $\lambda'$. By our above argument (with $\lambda$ replaced by 
$\lambda'$ and $\wp$ replaced by $\xi_x$), also the
resulting places $\lambda=\xi_x\circ\lambda'$ are nonequivalent.
\qed

\mn
%
%
\section{Applications to topologies and holomorphy rings}

In this section, $F$  always denotes a formally real function field over a real closed base field $R$. 

\subsection{Sets of real places and their topologies}
\mbox{ }\sn
From Theorem~\ref{MKPZr} we will deduce:
\begin{proposition}                        \label{dense}    
1) The set $M_R(F)$ is dense in $M(F)$ with respect to $\TopM(F)$.
\sn
2) If in addition $R$ is non-archimedean, then every nonempty intersection of an open set in 
the Zariski patch topology of $M(F)$ with an open set in $\TopM(F)$ contains infinitely many
places from $M_R(F)$.
\end{proposition}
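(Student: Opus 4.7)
The plan is to deduce both statements from Theorem~\ref{MKPZr} applied with $\wp=\xi_R$, translating each open-set condition on a sought $\R$-place of $F$ into a condition on the finite list of elements that is fed into that theorem.

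For part 1), I will take a nonempty basic $\TopM(F)$-open set $U(f_1,\ldots,f_m)$ and a witness $\xi$ in it. Since $\xi$ is compatible with some ordering $<$ on $F$ (whose restriction to the real closed field $R$ is necessarily its unique ordering) and $f_i\in H(F)\subseteq\cO_\xi$, the condition $\xi(f_i)>0$ forces $f_i>0$ and $\xi(f_i)\ne 0,\infty$. Then Theorem~\ref{MKPZr} applied to the elements $f_1,\ldots,f_m$ produces a place $\lambda\in S(F|R)$ with $F\lambda=R$, so $\lambda\in M(F|R)$, and assertion~(c') yields $\xi'(f_i)>0$ for $\xi'=\xi_R\circ\lambda\in M_R(F)$. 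This puts $\xi'$ in $M_R(F)\cap U(f_1,\ldots,f_m)$, proving density.

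For part 2), the non-archimedean hypothesis on $R$ makes $\wp=\xi_R$ nontrivial. A basic open set of the Zariski patch topology on $M(F)$ has the form $V=\{\eta\in M(F)\mid a_1,\ldots,a_k\in\cO_\eta;\ a_{k+1},\ldots,a_{k+\ell}\in\cM_\eta\}$. Given $\xi\in U(f_1,\ldots,f_m)\cap V$, I will apply Theorem~\ref{MKPZr} to the combined list $a_1,\ldots,a_{k+\ell},f_1,\ldots,f_m$. Since $\wp$ is nontrivial, assertions~(c), (c'), and~(d) can be imposed simultaneously --- the incompatibility flagged in the Remark after Theorem~\ref{MKPZr} is confined to the case of trivial $\wp$ --- so the resulting $\xi'=\xi_R\circ\lambda\in M_R(F)$ satisfies $a_j\in\cO_{\xi'}$ for $j\le k$, $a_j\in\cM_{\xi'}$ for $k<j\le k+\ell$, and $\xi'(f_i)>0$. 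Hence $\xi'\in M_R(F)\cap U(f_1,\ldots,f_m)\cap V$. To upgrade this to infinitely many places, I will invoke Proposition~\ref{MKPZa3}: its exceptional case also requires $\wp$ trivial and is ruled out by the non-archimedean hypothesis, and the earlier Remark on $M_R(F)$ guarantees that nonequivalent lifts $\lambda$ produce distinct composites $\xi_R\circ\lambda$.

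The main obstacle is not a deep computation but a bookkeeping check: one must verify that the non-archimedean hypothesis in part~2) really eliminates every corner of Theorem~\ref{MKPZr} and Proposition~\ref{MKPZa3} in which the needed combination of~(c), (c'), and~(d), together with the existence of infinitely many nonequivalent lifts, would fail. Once this is confirmed, density and its patch-refinement each reduce to matching the clauses of the target open set with the appropriate clauses of Theorem~\ref{MKPZr}.
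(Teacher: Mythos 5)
Your proposal is correct and follows essentially the same route as the paper: both parts are direct applications of Theorem~\ref{MKPZr} (with $\wp=\xi_R$, which is nontrivial exactly when $R$ is non-archimedean), together with Proposition~\ref{MKPZa3} for the infinitude in part 2). The only cosmetic difference is in part 1), where you invoke clause \emph{(c')} directly (indeed the theorem's addendum ``$\infty\ne\xi(a_i)>0$ implies $\xi'(a_i)>0$'' is tailor-made for this), whereas the paper sandwiches each $f_i$ between positive rationals $q_1<_\xi f_i<_\xi q_2$ and applies clause \emph{(g)} to $f_i-q_1$ and $q_2-f_i$ before composing with $\xi_R$.
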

\begin{proof}
Assume that there is an $\R$-place $\xi \in U(f_1,...,f_m)$ and choose
a compatible ordering $<_{\xi}$ on $F$. Then there are positive rational numbers $q_1$ and $q_2$ such that
\[
q_1 <_{\xi} f_i <_{\xi} q_2 \;  \mbox{ for } 1\leq i\leq m\>.
\]
Using Theorem~\ref{MKPZr}, we obtain a place $\lambda\in M(F|R)$ such that
\[
q_1 <\lambda(f_i) < q_2
\]
in $R$. Composing $\lambda$ with $\xi_R$ we obtain
\[
q_1 \leq \xi_R\circ \lambda(f_i) \leq q_2\>,
\]
which shows that the $\R$-place $\xi_R\circ \lambda$ is in $U(f_1,...,f_m)$. This proves the 
first assertion of Proposition~\ref{dense}.

\pars
In order to prove the second assertion, assume in addition that $R$ is non-archi\-medean. Then 
$\xi_R$ is a
nontrivial place. Further, take elements $a_1,\ldots,a_{k+\ell+m}\in F$ and an $\R$-place $\xi$
of $F$ such that $a_1,\ldots,a_k\in {\cO}_\xi\,$, $a_{k+1},\ldots,a_{k+\ell}\in \cM_\xi$ and
$\xi(a_{k+\ell+1})>0,\ldots,\xi(a_{k+\ell+m})>0$. Note that $\xi|_R=\xi_R$. Hence by
Theorem~\ref{MKPZr} in connection with Proposition~\ref{MKPZa3},
there are infinitely many $R$-rational places $\lambda$ of $F$ and places 
$\xi'=\xi_R\circ\lambda$ such that:
\sn
(1) \ $a_1,\ldots,a_k\in {\cO}_{\xi'}$ and $a_{k+1},\ldots,a_{k+\ell}\in {\cM}_{\xi'}\,$;
\sn
(2) \ $\xi'(a_{k+\ell+1})>0,\ldots,\xi'(a_{k+\ell+m})>0\,$.
\end{proof}

\parm
We observe the following equivalences that hold for all $a\in F$:
\begin{eqnarray}
\lambda(a)>0 & \Leftrightarrow & \lambda\left(\frac{a}{1+a^2}\right)>0\>,  \label{eqa}\\
\lambda (a)\ne 0,\infty & \Leftrightarrow & \lambda\left(\frac{a^2}{1+a^2}\right)\> > \> 0\>.   \label{eqa^2}
\end{eqnarray}

\parm
We introduce a topology $\TopM(F|R)$ on $M(F|R)$ through the basic open sets
\[
V(f_1,...,f_m)\>:=\> \{\lambda \in M(F|R)\mid \lambda(f_i)>0 \mbox{ for } 1\leq i\leq m\}
\]
where $f_1,...,f_m \in H(F|R)$. Note that $\frac{f_i}{1+f_i^2} \in H(F)\subseteq H(F|R)$. Using the equivalence
(\ref{eqa}) we can thus replace the condition ``$f_1,...,f_m \in H(F|R)$'' by ``$f_1,...,f_m \in H(F)$'' without
changing the collection of basic sets.
\begin{proposition}                    \label{TopMF|R}
Consider elements $f_1,\ldots,f_k\in H(F)$ and nonzero elements $a_1,\ldots,a_\ell\in F$. If the basic open set $V(f_1,\ldots,f_k)$ of $\TopM(F|R)$ is
non\-empty, then there are infinitely many places in
\begin{equation}                  \label{HarZarset}
\{\lambda\in V(f_1,\ldots,f_k)\mid \lambda(a_j)\ne 0,\infty \mbox{ for } 1\leq j\leq \ell\}\>.
\end{equation}
\end{proposition}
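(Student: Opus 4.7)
The plan is to apply Theorem~\ref{MKPZr} together with Proposition~\ref{MKPZa3} to a well-chosen starting place~$\xi$ constructed from a point of $V(f_1,\ldots,f_k)$. Since $V(f_1,\ldots,f_k)$ is nonempty, pick any $\lambda_0\in V(f_1,\ldots,f_k)\subseteq M(F|R)$. Then $F\lambda_0=R$ and $\lambda_0|_R=\id_R$, and $\lambda_0$ is nontrivial as $\trdeg F|R\geq 1$. Since $R$ is formally real, the Baer--Krull Theorem yields an ordering $<$ on~$F$ compatible with~$\lambda_0$. With respect to this~$<$, each $f_i$ is strictly positive: indeed, $\lambda_0(f_i)$ is a positive element of $R\subseteq F$, and, as $\lambda_0$ is the identity on $R$, it is not in $\cM_{\lambda_0}$; on the other hand $f_i-\lambda_0(f_i)\in\cM_{\lambda_0}$, which is convex in $(F,<)$. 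Hence $|f_i-\lambda_0(f_i)|<\lambda_0(f_i)$, so $f_i>0$. In particular, no $f_i$ is zero (and the $a_j$'s are nonzero by hypothesis).

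Next I invoke Theorem~\ref{MKPZr} with $\wp:=\id_R$ (trivial, hence compatible with the ordering on~$R$), with $\xi:=\lambda_0$ (nontrivial and compatible with~$<$), and with the finite list of nonzero elements $f_1,\ldots,f_k,a_1,\ldots,a_\ell$, for any admissible choice of~$\Gamma$. Because $\wp$ is trivial while~$\xi$ is not, the theorem allows one to realize assertions {\it (a)}, {\it (b)}, {\it (c)} and {\it (c')} together with the alternative pair {\it (f)} and {\it (g)}. The resulting place $\lambda\in S(F|R)$ satisfies $F\lambda=R$ by~{\it (a)}, so $\lambda\in M(F|R)$. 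Applying~{\it (g)} to each $f_i>0$ yields $\lambda(f_i)>0$, whence $\lambda\in V(f_1,\ldots,f_k)$. Applying~{\it (f)} to each $a_j$ yields $\lambda(a_j)\ne 0,\infty$. Thus $\lambda$ belongs to the set in~(\ref{HarZarset}).

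Finally, Proposition~\ref{MKPZa3} furnishes infinitely many nonequivalent such places~$\lambda$: the only case excluded in that proposition requires assertions~{\it (d)} and~{\it (e)} (not~{\it (f)} and~{\it (g)}) in the setting of trivial~$\wp$, nontrivial~$\xi$, and $\trdeg F|K=1$, which is not our situation. Since equivalent places in~$M(F|R)$ coincide (as observed in the remark following the definition of~$M(F|R)$), these nonequivalent places produce infinitely many distinct points of~(\ref{HarZarset}). The only delicate ingredient is the short convexity argument making all the~$f_i$ positive in some ordering compatible with~$\lambda_0$, without which assertion~{\it (g)} would not be applicable; the remainder is a direct instantiation of Theorem~\ref{MKPZr} and Proposition~\ref{MKPZa3} with $\xi=\lambda_0$ and $\wp=\id_R$.
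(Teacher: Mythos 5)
Your proof is correct. It takes the same first step as the paper (pick $\lambda_0\in V(f_1,\ldots,f_k)$ and produce a compatible ordering of $F$ in which all $f_i$ are positive -- your convexity argument is a correct filling-in of a step the paper asserts without detail), but then diverges in packaging: you cite Theorem~\ref{MKPZr} (assertions {\it (a)}, {\it (f)}, {\it (g)} in the case ``$\wp$ trivial, $\xi$ nontrivial'') together with Proposition~\ref{MKPZa3} as black boxes, whereas the paper argues more directly, passing to the finite extension $F'=F(\sqrt{f_1},\ldots,\sqrt{f_k})$, invoking the existential closedness of $R$ in the formally real field $F'$, applying Theorem~\ref{exrp} there to get infinitely many $R$-rational places avoiding $0,\infty$ on all the relevant elements and their inverses, and restricting back to $F$. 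The two routes coincide at the core, since the square-root adjunction is exactly the device used in the proof of assertion {\it (g)} of Theorem~\ref{MKPZr}, and that theorem's proof itself runs through Proposition~\ref{MKPZa2} and Theorem~\ref{exrp}; the paper's version is merely more economical in that it needs only Theorem~\ref{exrp} rather than the full composite-place machinery, while yours has the small advantage of never leaving $F$ (avoiding the finite-to-one restriction argument from $F'$ to $F$). Two minor remarks: your appeal to the remark that equivalent places in $M(F|R)$ coincide is superfluous -- nonequivalent places are a fortiori distinct; and you correctly identify that the exceptional case of Proposition~\ref{MKPZa3} (trivial $\wp$, nontrivial $\xi$, $\trdeg F|K=1$, assertions {\it (d)}, {\it (e)}) does not apply here since you request {\it (f)} and {\it (g)}.
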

\begin{proof}
Take $\lambda\in V(f_1,\ldots,f_k)$. Then
$\lambda$ is an $R$-rational place with $\lambda(f_i)>0$ for $1\leq i \leq k$ and therefore $F$ admits an ordering $P$ which is compatible with $\lambda$
and under which $f_1,\ldots,f_k$ are positive. We next pass to the function field $F'=F(a_{l+1},\dots,a_{l+k})$ where $f_i=a_{l+i}^2$ for $i=1 \leq i \leq k$. The ordering $P$ of $F$ can be extended to $F'$, so $F'$ is also a formally real function field over $R$. The extension $F'|F$ is finite and we note that all $a_i, i=1,\dots,l+k$, are nonzero elements.

Since $R$ is real closed and $F'$ is formally real, we know that the field $R$ is 
existentially closed in $F'$. By Theorem~\ref{exrp} there are infinitely many places in 
$M(F'|R)$ which do not take the value $\infty$ on $a_1,\ldots,a_{\ell+k},a_1^{-1},
\ldots,a_{\ell+k}^{-1}$. Hence they do not take the values $0,\infty$ on
$a_1,\ldots,a_{\ell+k}$. In particular, they send $f_1,\ldots,f_k$ to squares 
$\ne 0,\infty$ which consequently
are positive elements of $R$. The restrictions of these places are places in $M(F|R)$ as 
their residue fields are subfields of finite degree below $R$, hence equal to $R$. So 
they yield the desired places in the set (\ref{HarZarset}).
Indeed, since $F'|F$ is finite, the restriction of infinitely many places of $F'$ yields
infinitely many places of $F$.
\end{proof}

\begin{remark}

Alternatively, one may prove the last proposition by passing to regular affine $R$-algebras 
$A$ with quotient field $F'$ which contain at least all elements $a_1,\dots,a_{\ell+k},
a_1^{-1},\ldots,a_{l+k}^{-1}$. One then applies the Artin-Lang Homomorphism Theorem (see
\cite[4.1.2]{BCR}) and uses the fact that every regular $R$-point is the
center of a rational $R$-place. Yet, as shown in \cite{BCR} the Artin-Lang Theorem 
is a kind of geometric version of model theoretic facts.
\end{remark}

A point $x$ in a topological space is called \bfind{isolated} if the singleton $\{x\}$ is an open set.
\begin{theorem}                                    \label{topspaces}
Let $\xi_R$ denote the natural $\R$-place of $R$.
\sn
1) The mapping $\iota_{F|R}: M(F|R)\to M_R(F)$ defined in (\ref{MFRtoMRF}) is a bijection.
\sn
2)  $\iota_{F|R}$ is a topological embedding of $M(F|R)$ into $M(F)$.
\sn
3) All nonempty open sets in $M(F|R)$, $M(F)$ and $M_R(F)$ are infinite.
\sn
4) In particular, none of the spaces $M(F|R)$, $M(F)$ and $M_R(F)$ admit any isolated points.
\sn
\end{theorem}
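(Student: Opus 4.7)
The four assertions have a clear dependence, and I would prove them in the order 1, 2, 3, 4; Parts 1 and 4 are short, Part 3 follows from density results already proved, and Part 2 carries the main technical content. For Part 1, surjectivity is immediate from the definition $M_R(F) = \{\xi_R \circ \lambda : \lambda \in M(F|R)\}$. For injectivity, suppose $\xi_R \circ \lambda_1 = \xi_R \circ \lambda_2$ as places. Then $\cO_{\lambda_1}$ and $\cO_{\lambda_2}$ are both overrings of the common valuation ring $\cO_{\xi_R \circ \lambda_i} = \lambda_i^{-1}(\cO_{\xi_R})$, and overrings of a valuation ring in a field are linearly ordered by inclusion, so WLOG $\cO_{\lambda_1} \subseteq \cO_{\lambda_2}$. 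This means $\lambda_2$ is the coarser of the two, so $\lambda_1 = \tau \circ \lambda_2$ for some place $\tau$ on $F\lambda_2 = R$; since $\lambda_1$ and $\lambda_2$ both restrict to $\id_R$, we obtain $\tau|_R = \id_R$, forcing $\tau$ to be trivial and $\lambda_1 = \lambda_2$.

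For Part 2, I would verify continuity and openness of $\iota$ separately. Continuity is straightforward: for $h \in H(F)$ the fact that $\xi_R \circ \lambda$ is a real place yields $\lambda(h) \in H(R) = \cO_{\xi_R}$, so $\xi_R(\lambda(h)) > 0$ in $\R$ holds iff $\lambda(nh - 1) > 0$ in $R$ for some $n \in \N$. Hence $\iota^{-1}(U(h_1, \ldots, h_k)) = \bigcap_i \bigcup_n V(n h_i - 1)$, which is open in $\TopM(F|R)$.

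Openness of $\iota$ onto its image is the main obstacle. Given a basic open set $V(f_1, \ldots, f_m)$, we may assume $f_i \in H(F)$ by replacing $f_i$ with $f_i/(1+f_i^2)$ as in the remark following the definition of $\TopM(F|R)$; then we want to express $\iota(V(f_1, \ldots, f_m))$ as the trace on $M_R(F)$ of an open set in $M(F)$. The difficulty is that $\lambda(f_i) > 0$ in $R$ may involve a positive infinitesimal, which is invisible to $\xi_R$. The plan is to introduce the normalized products $g^{(i)}_c := f_i c / (c^2 + f_i^2)$ parametrized by $c \in H(R)^+$; each $g^{(i)}_c$ lies in $H(F)$ since $|g^{(i)}_c| \leq 1/2$ holds in every ordering of $F$. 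A short computation shows that $\xi_R \circ \lambda(g^{(i)}_c) > 0$ forces $\lambda(f_i) > 0$ in $R$, while conversely choosing $c := \lambda(f_i) \in H(R)^+$ yields $\lambda(g^{(i)}_c) = 1/2$, hence $\xi_R \circ \lambda(g^{(i)}_c) = 1/2 > 0$. Therefore $\iota(V(f_i)) = \bigcup_{c \in H(R)^+} U(g^{(i)}_c) \cap M_R(F)$, and distributing $\bigcap$ over $\bigcup$ exhibits $\iota(V(f_1, \ldots, f_m))$ as the trace of a union of basic open sets of $M(F)$.

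For Parts 3 and 4: in $M(F|R)$, every nonempty basic open set $V(f_1, \ldots, f_m)$ is infinite by Proposition~\ref{TopMF|R}. In $M(F)$ and $M_R(F)$, given a nonempty basic open $U(f_1, \ldots, f_m)$ (respectively its trace on $M_R(F)$), pick $\xi$ in it, choose a compatible ordering and positive rationals with $q_1 <_\xi f_i <_\xi q_2$, then invoke Theorem~\ref{MKPZr} together with Proposition~\ref{MKPZa3} to obtain infinitely many nonequivalent $\lambda \in M(F|R)$ with $q_1 < \lambda(f_i) < q_2$; the compositions $\xi_R \circ \lambda$ give infinitely many distinct points in the open set, exactly as in the proof of Proposition~\ref{dense}. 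Part 4 then follows immediately: an isolated point would yield a nonempty one-point open set, contradicting Part 3.
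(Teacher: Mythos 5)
Your proof is correct and follows essentially the same route as the paper: injectivity via comparability of the valuation overrings of $\cO_{\xi_R\circ\lambda}$ (this is the paper's Proposition~\ref{bijective}), continuity via rational bounds, openness via the same normalizing element $cf/(c^2+f^2)\in H(F)$ (merely packaged as a union over $c\in H(R)^+$ instead of pointwise), and infinitude via Proposition~\ref{TopMF|R} together with the density construction from Theorem~\ref{MKPZr} and Proposition~\ref{MKPZa3}. The only cosmetic difference is that for part 3 the paper first treats $M(F|R)$ and transfers the conclusion to $M_R(F)$ and $M(F)$ through the embedding and Proposition~\ref{dense}, whereas you rerun the density argument directly in $M(F)$; both are valid.
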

\begin{proof}
1): This is a special instance of Proposition \ref{bijective} in the next section.

\mn
2): We first prove that $\iota_{F|R}:M(F|R)\rightarrow M(F)$ is continuous. Take $\lambda\in M(F|R)$ and $f \in
H(F)$ such that $\xi:=\iota_{F|R}(\lambda)=\xi_R\circ\lambda\in U(f)$. Then
there are positive rationals $c, d$ such that $c<\xi(f)<d$. Then also $c<\lambda(f)<d$, so 
\[
\lambda \in V(f-c)\cap V(d-f)\>=:\>V
\]
and $V$ is an open neighbourhood of $\lambda$. We will show that $\iota_{F|R}(V)\subseteq U(f)$.
If $\lambda' \in V$, then $c<\lambda'(f)<d$, whence $c\leq\xi_R \circ\lambda'(f)\leq d$. Thus $\xi_R
\circ\lambda'\in U(f)$.
Hence, $\iota_{F|R}$ is shown to be continuous.

\pars
Next, we prove that $\iota_{F|R}:M(F|R)\rightarrow M_R(F)$ is an open map. To this end, take an arbitrary
subbasic open set $V(f) = \{\lambda\in M(F|R)\mid \lambda(f)>0\}$ where we may take $f \in H(F)$. We have
to show that $\iota_{F|R}(V(f))$ is open in the subspace topology on $M_R(F)$. Take any $\lambda \in V(f)$ and
set $\xi=\xi_R\circ \lambda$.
Then $a:=\lambda(f)\in H(R), a>0$. Set $g:=\frac{af}{a^2+f^2}$. One sees that $g\in H(F)$.
We obtain that $\lambda(g)=\frac 1 2=\xi(g)$ and therefore $\xi\in U(g)\cap M_R(F)$. We want to show that
the whole neighbourhood $U(g)\cap M_R(F)$ of $\xi$ is contained in $\iota_{F|R}(V(f))$.

If $\xi'\in U(g)\cap M_R(F)$, then $\xi'=\xi_R\circ\lambda'$ with $\lambda'\in M(F|R)$, and
$\xi_R(\lambda'(g))=\xi'(g)>0$ implies
that $\lambda'(g)>0$, whence $\lambda'(f)>0$. This yields that $\lambda'\in V(f)$, and the inclusion
$U(g)\cap M_R(F) \subseteq \iota_{F|R}(V(f))$ is proven.

\mn
3): The assertion about $M(F|R)$ follows from Proposition~\ref{TopMF|R}. From this the assertion about $M_R(F)$
follows by part 2) of our theorem, which together with the density of $M_R(F)$ in $M(F)$ (see
Proposition~\ref{dense}) implies the assertion for $M(F)$.

\mn
4): The assertions follow directly from part 3).
\end{proof}

\begin{remark}
Here is an even simpler proof of the fact that $M(F)$ has no isolated points (from which the same follows for
$M_R(F)$ and $M(F|R)$ via the density of $M_R(F)$ in $M(F)$ and part 2) of the above theorem). We have that $F$
is a finite extension of some rational function field $R(x_1,...,x_n)$. Assume that
$\xi$ is an isolated point in $F$, i.e., $U:= \{\xi\}$ is an open subset of $M(F)$. Take the inverse image $V$
of $U$ in the space of orderings of $F$. It is open since $M(F)$ is a quotient space of the space of orderings
of $F$, and it has only finitely many elements by the Baer-Krull Theorem (note that the value group of the
restriction of any place $\xi\in M(F)$ to the real closed field $R$ has divisible value group and as $F|R$ has
finite transcendence degree, it follows that $v_\xi(F)/2v_\xi(F)$ is finite). Consider the set of all orderings on
$R(x_1,...,x_n)$ induced by the orderings in $V$. By the openness of the restriction function for 
orderings (see \cite[Theorem 4.4]{ELW}), this set is open in the space
of orderings of $R(x_1,...,x_n)$. As it contains a finite number of elements, this is impossible, as
\cite[Theorem 10]{C} shows that the space of orderings of $R(x_1,...,x_n)$ does not have isolated points.
\end{remark}

\subsection{Holomorphy rings}                                    \label{sectHr}
\mbox{ }\sn
In the introduction we alluded to the rings 
\[
H(F)B[x_1,\dots,x_n]\>,
\] 
$B$ any real valuation ring of $R$. We will
show that these rings  admit a description as an intersection of valuation rings of a family $\cF$ of composite
places, or in other words: they are the holomorphy ring of this family. This section begins with a general study
of rings which are intersections of families of valuation rings of composite places. It turns out that this
property is closely related to a certain type of Nullstellensatz, a fact which was first observed by H.-W.\
Sch\"ulting, see~\cite[Section~2]{Sch1} in the context considered there.

Two further issues will be discussed in this section.  We look at the existence of minimal representations as an
intersection of valuation rings of composite places. Secondly, a new description
of the real holomorphy ring  $H(F)$ will be presented which can be seen as an analogue to a certain refinement of
Artin's solution of Hilbert's 17th problem.

\parm
Given any subring $D$ of $F$, the relative real holomorphy ring $H(F|D)$ is defined as follows:
\[
H(F|D)\>:=\>\bigcap\{\cO \mid \cO \text{ a real valuation ring of } F \text{ and }
D\subseteq\cO\}\>.
\]

As said above, relative real holomorphy rings are overrings of $H(F)$, hence they are Pr\"ufer rings with $F$ as their field of fractions. This applies to our special case. In addition, we will be using that  Pr\"ufer rings are the intersection of their valuation overrings and that the valuation overrings of $H(F)$ are exactly the valuation rings of the real places of $F$.
We find that
\begin{eqnarray*}
H(F|D)&=&H(F)D\>,\\
H\left(F|B[x_1,\dots,x_n]\right)&=&H(F|B)[x_1,\dots,x_n]\>=\> H(F)B[x_1,\dots,x_n]\>,
\end{eqnarray*}
for any family $x_1,\dots,x_n$ of elements of $F$.

In fact, one checks that the
rings to be compared admit the same set of valuation overrings.

\parm
Note: if a subring $A\subseteq F$ is the intersection of real valuation rings of $F$, then it must contain $H(F)$.
Hence, for a general discussion we will impose the condition
\[
H(F)\>\subseteq\> A
\]
throughout, if not stated otherwise. Under this condition, the ring $A$ is a Pr\"ufer ring. Hence it is the
intersection of all valuation overrings which are real valuation rings as they contain $H(F)$. However, we are
not interested in this sort of presentation of $A$ as an intersection of general real valuation rings. As mentioned before, we
want to study rings $A$ which admit an intersection presentation by valuation rings of composite places.

\pars
The real valuation rings of the base field $R$ are just the overrings of $H(R)=\cO_R$. They 
will be denoted by, say, $B$ and $C$, and their canonical places by $\pi_B$ and $\pi_C\,$.
Recall that if $B$ is a valuation ring of $R$ with maximal ideal $\cM_B\,$, then $\pi_B$ sends
every $a\in \cO_B$ to $a+\cM_B\in \cO_B/\cM_B\,$, and every $a\in R\setminus\cO_B$
to $\infty$. If $\wp$ is a real place on $R$, then $\pi_B$ is equivalent to $\wp$. In particular,
if $B=\cO_R$, then $\pi_B$ is equivalent to $\xi_R$, and if $B=R$, then 
$\pi_B$ is equivalent to ${\rm id}_R$.

These real places are pairwise non-equivalent and they represent the equivalence classes of real 
places on $R$. The objects of central interest in this paper are the \textbf{composite places} 
$\wp\circ \lambda$, $\wp$ any real place on $R$ and $\lambda\in M(F|R)$. They are equivalent to
the places $\pi_B\circ \lambda$, $B$ any real valuation ring of $R$, $\lambda \in M(F|R)$. This
follows from the fact that the valuation ring of $\wp\circ \lambda$ equals $\lambda^{-1}(B)$ 
where $B=\cO_\wp=\cO_{\pi_B}\,$. In addition, $\cO_{\pi_B\circ \lambda}$ has the maximal ideal 
$\lambda^{-1} (\cM_B)$ and its residue field equals the residue field of $B$.

From the following result we then deduce the fact that the family $\pi_B\circ\lambda, B$ a real
valuation ring of $R, \lambda \in M(F|R)$, is a complete system of representatives of the 
family of composite places as defined above. 
\begin{proposition}                        \label{bijective}
Let $B,C$ be real valuation rings of $R$ and $\lambda,\mu \in M(F|R)$. If the places $\pi_B\circ \lambda$ and $\pi_C\circ \mu$ are equivalent, then
\[
B=C, \>\lambda=\mu.
\]
\end{proposition}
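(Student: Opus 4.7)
The plan is to first extract $B=C$ from the equality of the valuation rings by restricting to $R$, and then to use the chain structure of overrings of a common valuation ring, together with the fact that $\lambda|_R = \mu|_R = \id_R$, to force $\cO_\lambda = \cO_\mu$, from which $\lambda = \mu$ follows by the paper's convention that equivalent places in $M(F|R)$ are identified. As recalled in the paragraph preceding the proposition, the valuation rings of $\pi_B\circ\lambda$ and $\pi_C\circ\mu$ are $\lambda^{-1}(B)$ and $\mu^{-1}(C)$ respectively, so equivalence gives $\lambda^{-1}(B) = \mu^{-1}(C)$; intersecting with $R$ and using $\lambda|_R = \mu|_R = \id_R$ immediately yields $B = \lambda^{-1}(B)\cap R = \mu^{-1}(C)\cap R = C$.

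Next, $\cO_\lambda$ and $\cO_\mu$ are both valuation overrings of the common valuation ring $\lambda^{-1}(B) = \mu^{-1}(B)$ in $F$, and overrings of any valuation ring are linearly ordered by inclusion, so we may assume $\cO_\lambda \subseteq \cO_\mu$. To rule out strict inclusion, suppose $a\in\cO_\mu\setminus\cO_\lambda$; then $a\notin R$ (otherwise $a\in R\subseteq\cO_\lambda$). Set $b := \mu(a) \in F\mu = R$. Since $b\in R\subseteq\cO_\lambda$ while $a\notin\cO_\lambda$, we have $a-b\notin\cO_\lambda$, so $(a-b)^{-1}\in\cO_\lambda\subseteq\cO_\mu$. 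On the other hand $\mu(a-b) = \mu(a)-b = 0$ gives $a-b\in\cM_\mu$, so that $1 = (a-b)(a-b)^{-1}\in\cM_\mu$, a contradiction. Hence $\cO_\lambda = \cO_\mu$, and so $\lambda$ and $\mu$ are equivalent; by the convention in force for $M(F|R)$ they are then equal.

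The substantive step is the contradiction in the second paragraph: the chain property of overrings alone only makes $\cO_\lambda$ and $\cO_\mu$ comparable, and what turns comparability into equality is the extra rigidity coming from $F\lambda = F\mu = R$ together with $\mu|_R = \id_R$, which is precisely what the element $a-\mu(a)$ exploits.
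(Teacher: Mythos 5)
Your proof is correct and follows essentially the same route as the paper's: $B=C$ by intersecting the common valuation ring with $R$, then comparability of the overrings $\cO_\lambda,\cO_\mu$ combined with the element $a-\mu(a)$ to force equality. The only cosmetic difference is that the paper uses $\cM_\mu\subseteq\cM_\lambda$ to conclude $\lambda(a)=\mu(a)$ directly for all $a\in\cO_\mu$, whereas you run the same element through a contradiction to get $\cO_\lambda=\cO_\mu$ and then invoke the identification of equivalent places in $M(F|R)$; both are valid.
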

\begin{proof}Let $V$ denote the valuation ring of the composite place $\pi_B\circ \lambda$. Then $V\cap R=B$. Hence, $B=C$ follows. Clearly, $V\subseteq \cO_\lambda, V\subseteq \cO_\mu$. Therefore, these two valuation rings are comparable, say $\cO_\lambda\subseteq \cO_\mu$. Pick any $a\in \cO_\mu$. then $\mu(a-\mu(a))=0$. As the maximal
ideal of the larger valuation ring $\cO_\mu$ is contained in the maximal ideal of $\cO_\lambda$, we find that
$\lambda(a-\mu(a))=0$, whence $\lambda=\mu$.
\end{proof}

\vspace{.2cm}
The concepts and results we are presenting in this paper only depend on the equivalence class of the composite places, as one may check. Therefore, it is sufficient to work with the distinguished family 
\[
\cC(F)\>:=\> \{\pi_B\circ\lambda\mid B \text{ real valuation ring of }R, \ \lambda \in M(F|R)\}
\]
of composite places, which we will abbreviate as $\cC$, and for a given ring subring $A$ of $F$ with the set
\[
\cC_A\>:=\> \{\xi\in \cC\mid \xi \text{ finite on } A\}=\{\pi_C\circ \lambda\mid \lambda \in M(F|R),\ \lambda(A)\subseteq C\}\>.
\]
In particular,
\[
\cC_{H(F|B)}\>=\> \{\pi_C\circ\lambda\mid \lambda \in M(F|R), \ B\subseteq C\}\>,
\]
since for each $\lambda\in M(F|R)$ we have that $\lambda(H(F))=H(R)=\cO_R$ and therefore $\lambda(H(F|B))=
\lambda(H(F)B)=B$. Since
\[
\cC_{A[x]}\>=\>\{\xi \in \cC_A\mid \xi(x)\neq \infty \}
\]
and $\lambda(H(F|B)[x])=B[\lambda(x)]$ if $\lambda$ is finite on $H(F|B)[x]$, we obtain that
\begin{equation}                    \label{cCH(F|B)}
\cC_{H(F|B)[x]}\>=\>\{\pi_C\circ\lambda\mid \lambda \in M(F|R), \ B\subseteq C \text{ and } \lambda(x)\in C\}\>.
\end{equation}

We will make use of the following 
\begin{lemma}                                    \label{f.g.}
Assume that $H(F)\subseteq A, \ x_1,\dots,x_n\in F$, and $\mathfrak{a}$ is a finitely generated ideal of $A$. Then:
\sn
1) there is $x\in F$ with $A[x_1,\dots,x_n]=A[x]=A[1+x^2]$,
\sn
2) $\sqrt{\mathfrak{a}}$ is the radical of a principal ideal.
\end{lemma}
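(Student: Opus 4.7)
My plan is to reduce both parts to the sum-of-squares formula $v(a_1^2+\cdots+a_n^2) = 2\min_i v(a_i)$, which holds for every \emph{real} valuation $v$ on $F$ and any $a_1,\ldots,a_n \in F$ not all zero. To verify it, I would pick $j$ with $v(a_j)$ minimal and write $\sum a_i^2 = a_j^2\bigl(1 + \sum_{i\neq j}(a_i/a_j)^2\bigr)$. Each $(a_i/a_j)^2$ has non-negative $v$-value, so the residue of $1+\sum_{i\neq j}(a_i/a_j)^2$ is $1$ plus a sum of squares in the formally real residue field, hence nonzero. Thus the bracketed factor has $v$-value $0$ and the formula follows.

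For part (1), I would set $x := x_1^2+\cdots+x_n^2 \in F$. The inclusion $A[x] \subseteq A[x_1,\ldots,x_n]$ is immediate. For the reverse, I would use that $A$ is a Pr\"ufer domain containing $H(F)$, so every overring of $A$ (in particular $A[x]$) is the intersection of its valuation overrings, and every such valuation overring contains $H(F)$ and is therefore a real valuation ring. Thus $A[x] = \bigcap \{\cO_v : \cO_v \supseteq A,\ v(x)\geq 0\}$ with $v$ real. For any such $v$, the sum-of-squares formula gives $v(x) = 2\min_i v(x_i)\geq 0$, forcing $v(x_i)\geq 0$ for every $i$, so $x_i\in\cO_v$ and $A[x_1,\ldots,x_n]\subseteq\cO_v$. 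Intersecting yields $A[x_1,\ldots,x_n]\subseteq A[x]$. The second equality $A[x]=A[1+x^2]$ is routine: $1+x^2\in A[x]$ gives one inclusion, while $x = (1+x^2)\cdot\frac{x}{1+x^2}$ with $\frac{x}{1+x^2}\in H(F)\subseteq A$ gives the other.

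For part (2), with $\mathfrak{a}=(a_1,\ldots,a_n)$, I would establish $\sqrt{\mathfrak{a}}=\sqrt{(a_1^2+\cdots+a_n^2)}$. One inclusion follows at once since $a_1^2+\cdots+a_n^2 \in \mathfrak{a}^2\subseteq\mathfrak{a}$. For the other, I would verify that $a_i^2 \in (a_1^2+\cdots+a_n^2)A$ for every $i$: the sum-of-squares formula gives $v\bigl(a_i^2/(a_1^2+\cdots+a_n^2)\bigr)\geq 0$ at every real place $v$ with $\cO_v\supseteq A$, and since $A$ is the intersection of exactly such valuation rings, the quotient lies in $A$. Hence each $a_i\in\sqrt{(a_1^2+\cdots+a_n^2)}$, and the equality of radicals follows.

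The only nontrivial ingredient is the sum-of-squares formula, which relies essentially on the formal reality of the residue fields; everything else is the standard description of overrings of a Pr\"ufer domain as intersections of valuation overrings, combined with the observation that $H(F)\subseteq A$ forces all such overrings to be real valuation rings.
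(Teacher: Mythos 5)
Your proof is correct, and the underlying engine --- that at every real valuation a sum of squares has value $2\min_i v(a_i)$, because $1+\sum(\text{squares})$ has nonzero residue in a formally real residue field --- is exactly the fact the paper relies on. The execution differs in part (1): the paper takes $x=1+\sum_k x_k^2$ and disposes of the nontrivial inclusion by the one-line identity $x_i = x\cdot\frac{x_i}{x}$ together with $\frac{x_i}{1+\sum_k x_k^2}\in H(F)\subseteq A$, whereas you take $x=\sum_k x_k^2$ (for which $\frac{x_i}{x}$ need \emph{not} lie in $H(F)$, e.g.\ $n=1$, $x_1=t$) and therefore must route the argument through the representation of the Pr\"ufer overring $A[x]$ as the intersection of its (necessarily real) valuation overrings, checking $x_i\in\cO_v$ for each of them. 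Both are valid; the paper's choice of $x$ buys a purely algebraic one-liner, while yours costs an appeal to the intersection description of overrings but makes the valuation-theoretic mechanism explicit. For part (2) the two arguments are essentially identical: the paper notes $\mathfrak{a}^2=(\sum f_k^2)$ because $\frac{f_if_j}{\sum f_k^2}\in H(F)\subseteq A$ and uses $\sqrt{\mathfrak{a}}=\sqrt{\mathfrak{a}^2}$, while you verify $a_i^2\in(\sum a_k^2)A$ directly by the same valuation estimate; these differ only in presentation.
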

\begin{proof}
1): \ We show that $A[x_1,\dots,x_n]=A[1+\sum_1^n x_i^2]$. The inclusion ``$\supseteq$'' is clear, while the
inclusion ``$\subseteq$'' follows from the fact that $\frac{x_i}{1+\sum_k x_k^2}\in H(F)$ for all $i$, and
$H(F)\subseteq A$. We set $x=1+\sum_1^n x_i^2$ and observe that a similar argument shows that $A[x]=A[1+x^2]$
because $\frac{x}{1+x^2}\in H(F)$.
\sn
2): \ Let $\mathfrak{a}=(f_1,\dots,f_n)$. Then $\mathfrak{a}^2=(\sum_1^nf_i^2)$ as $\frac{f_if_j}{\sum_kf_k^2}\in
H(F)\subseteq A$. Now our assertion follows since $\sqrt{\mathfrak{a}}=\sqrt{\mathfrak{a}^2}$.
\end{proof}
\n
In view of part 1) of this lemma, whenever we will consider a finitely generated ring extension $A'$ of $A$ inside $F$, we may
always assume it to be of the form $A'=A[1+x^2]$ for some $x\in F$.

\pars
We say that the ring $A$ satisfies the \bfind{intersection property} if
\[
A\>=\>\bigcap_{\xi\in \cC_A}\cO_\xi\>,
\]
or in other words, if $A$ is the \bfind{holomorphy ring of the family $\cC_A\,$}.

Next, we consider an ideal $\mathfrak{a}$ of $A$, from which we obtain the zero set
\[
V(\mathfrak{a})\>:=\> \{\xi \in \cC_A\mid \xi=0 \text{ on }\mathfrak{a}\}\>.
\]
Likewise, from a subset $V\subseteq \cC_A$ we obtain the vanishing ideal
\[
I(V)\>:=\>\{a\in A\mid \xi(a)=0  \mbox{ for all } \xi\in V\}\>.
\]
Clearly, $\sqrt{\mathfrak{a}}\subseteq I\left(V(\mathfrak{a})\right)$. Following the usual terminology, we say
that the ideal $\mathfrak{a}$ satisfies the \bfind{Nullstellensatz} if
$I\left(V(\mathfrak{a})\right)=\sqrt{\mathfrak{a}}$.

\pars
The following proposition extends Sch\"ulting's result \cite[2.6]{Sch1}.
\begin{proposition}                                  \label{nullstellensatz}
Assume that $H(F)\subseteq A$. Then the following statements are equivalent:
\sn
1) $A$ satisfies the Nullstellensatz for finitely generated ideals,
\sn
2) every finite ring extension $A[x_1,\dots,x_n]$, where $n\in \N, \ x_1,\dots,x_n\in F$, satisfies the Nullstellensatz for
finitely generated ideals,
\sn
3) every finite ring extension $A[x_1,\dots,x_n]$, where $n\in \N, \ x_1,\dots,x_n\in F$ has the intersection property.
\sn
\end{proposition}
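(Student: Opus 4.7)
The plan is to establish the cycle $(1) \Rightarrow (2) \Rightarrow (3) \Rightarrow (1)$, along with the trivial $(2) \Rightarrow (1)$ (case $n=0$). For $(3) \Rightarrow (1)$, Lemma~\ref{f.g.}(2) reduces the Nullstellensatz for finitely generated ideals to the principal case $(f) \subseteq A$. Given $0 \ne g \in I(V((f)))$, I would use the intersection property of the finite extension $A[1/g]$ (guaranteed by~(3)): every $\xi \in \cC_{A[1/g]}$ satisfies $v_\xi(g) = 0$, since both $g$ and $1/g$ lie in $\cO_\xi$, so the contrapositive of the vanishing hypothesis forces $v_\xi(f) = 0$, whence $1/f \in \cO_\xi$. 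The intersection property then delivers $1/f = \sum_{i=0}^N c_i g^{-i}$ with $c_i \in A$, and multiplying through by $fg^N$ yields $g^N = f(c_0 g^N + \cdots + c_N) \in (f)$.

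For $(2) \Rightarrow (3)$, I would use a Pr\"ufer-ring argument in the spirit of Sch\"ulting. Given a finite extension $A' = A[x_1,\dots,x_n]$ and $a \in F$ with $a \in \cO_\xi$ for every $\xi \in \cC_{A'}$, pass to $A'[1/a]$, itself a finite extension of $A$ and therefore satisfying NSS by~(2). The principal ideal $(1/a) \subseteq A'[1/a]$ has empty vanishing locus in $\cC_{A'[1/a]}$, because combining $a \in \cO_\xi$ with $1/a \in \cO_\xi$ forces $v_\xi(a) = 0$. The Nullstellensatz then makes $1/a$ a unit, so $a \in A'[1/a]$; writing $a$ as a polynomial in $1/a$ with coefficients in $A'$ and clearing denominators produces a monic polynomial relation for $a$ over $A'$. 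Since $A'$ is an overring of $H(F)$, it is Pr\"ufer and hence integrally closed in $F$, so $a \in A'$.

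The main obstacle lies in $(1) \Rightarrow (2)$: how does NSS on $A$ propagate to arbitrary finite extensions? The decisive structural observation is that, by Lemma~\ref{f.g.}(1), $A[x_1,\dots,x_n] = A[1+x^2]$ for a single $x \in F$; since $w := 1/(1+x^2)$ lies in $H(F) \subseteq A$, this rewrites as $A[x_1,\dots,x_n] = A[1/w]$, a \emph{localization} of $A$ at an element of $A$ itself. It thus suffices to show that if $B \supseteq H(F)$ satisfies NSS then so does $B[1/w]$ for every $0 \ne w \in B$. Given a principal ideal $(f')$ of $B[1/w]$ and $g' \in I(V((f')))$ computed in $B[1/w]$, clearing the unit $w$ allows us to assume $f', g' \in B$. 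A case split on whether $v_\xi(w) > 0$ or $v_\xi(w) = 0$ (for $\xi \in V((f'))$ inside $\cC_B$) shows $g'w \in I(V((f')))$ computed in $B$: in the first case the $w$-factor alone supplies the positivity, in the second the $B[1/w]$-hypothesis on $g'$ gives $v_\xi(g') > 0$. Applying NSS of $B$ yields $(g'w)^L \in (f')B$, and inverting $w$ back inside $B[1/w]$ produces the desired $g'^L \in (f')B[1/w]$.
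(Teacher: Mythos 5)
Your proposal is correct. The implication 3) $\Rightarrow$ 1) coincides with the paper's argument (reduce to a principal ideal $(f)$ via Lemma~\ref{f.g.}, then extract $\frac{1}{f}\in A[\frac{1}{g}]$ from the intersection property of that finite extension), but the overall organization is genuinely different. The paper proves the equivalence 1) $\Leftrightarrow$ 3) directly --- its 1) $\Rightarrow$ 3) runs through the elements $\frac{1}{1+f^2},\frac{1}{1+x^2}\in H(F)\subseteq A$ and the integral closedness of the Pr\"ufer ring $A[1+x^2]$ --- and then obtains 1) $\Rightarrow$ 2) purely formally, from the fact that the equivalence holds for every overring of $H(F)$ and that a finite extension of $A[x_1,\dots,x_n]$ is a finite extension of $A$. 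You instead close the cycle 1) $\Rightarrow$ 2) $\Rightarrow$ 3) $\Rightarrow$ 1) with two direct arguments: for 1) $\Rightarrow$ 2) you observe that $A[x_1,\dots,x_n]=A[1/w]$ with $w=\frac{1}{1+x^2}\in H(F)\subseteq A$ is a localization of $A$, and your case split on $v_\xi(w)>0$ versus $v_\xi(w)=0$ (exhaustive, since $w\in A\subseteq\cO_\xi$) correctly shows that the Nullstellensatz passes to such localizations; for 2) $\Rightarrow$ 3) you apply the Nullstellensatz on $A'[1/a]$ to the principal ideal $(1/a)$, whose vanishing locus in $\cC_{A'[1/a]}$ is empty, to make $1/a$ a unit and then conclude $a\in A'$ by integral closedness of the Pr\"ufer ring $A'$. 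Both routes rest on the same structural ingredients (Lemma~\ref{f.g.}, membership of $\frac{1}{1+x^2}$ in $H(F)$, integral closedness of overrings of $H(F)$), but yours makes the mechanism behind 1) $\Rightarrow$ 2) explicit where the paper's derivation is formal; the trade-off is that the paper's direct 1) $\Rightarrow$ 3) shows the intersection property follows from the Nullstellensatz on $A$ alone, whereas you reach 3) only by composing through 2). Two small points you should make explicit: the reduction from finitely generated to principal ideals in $B[1/w]$ again uses Lemma~\ref{f.g.} together with $V(\mathfrak{a})=V(\mathfrak{a}^2)$, and the case $a=0$ in 2) $\Rightarrow$ 3) must be set aside trivially before forming $1/a$.
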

\begin{proof}
The implication 2) $\Rightarrow$ 1) is trivial, as $A$ is its own finite ring extension.
Once the equivalence of 1) and 3) is proven for \textbf{all} overrings of $H(F)$, the implication 1) $\Rightarrow$
2) also follows: if 1) holds, then by 3), every finite ring extension of $A' :=A[x_1,\dots,x_n]$ of $A$, being also a finite ring
extension of $A$, has the intersection property, which implies that 1) holds for $A'$.

1) $\Rightarrow$ 3):
As stated after the previous lemma, we know that  
\[
A[x_1,\dots,x_n]\>=\>A[1+x^2]
\]
for some $x\in F$. Set $A'=A[1+x^2]$ and consider any $f\in \bigcap_{\xi\in \cC_{A'}}\cO_\xi$. 
\\ From the definition we deduce that
$$ \xi \in \cC_{A'} \Leftrightarrow \xi \in \cC_A, \ \xi(1+x^2)\neq\infty .$$
Hence, for all $\xi\in \cC_A$, the implication $\xi(1+x^2)\neq\infty\Rightarrow
\xi(1+f^2)\neq\infty$, holds, and so its contraposition
\begin{equation}             \label{contrap}
\xi\left(\frac{1}{1+f^2}\right)=0\>\Rightarrow\> \xi\left(\frac{1}{1+x^2}\right)=0\>.
\end{equation}
We observe that $\frac{1}{1+x^2},\frac{1}{1+f^2}\in H(F)\subseteq A$. Consider the principal $A$-ideal $\mathfrak{a}=
\left(\frac 1 {1+f^2}\right)$. From (\ref{contrap}) we obtain that $\frac{1}{1+x^2}\in
I\left(V(\mathfrak{a})\right)$. We infer from $1)$ that
$\left(\frac{1}{1+x^2}\right)^k=a\frac{1}{1+f^2}$ for some $k\in\N$, $a\in A$ and $1+f^2\in A'=A[1+x^2]$. The Pr\"ufer
ring $A'$ is integrally closed, so $f\in A'$ as was to be shown.
\sn
3) $\Rightarrow$ 1): \ Take any finitely generated ideal $\mathfrak{a}$ of $A$. In the proof of statement 2) of the previous lemma it was shown that $\mathfrak{a}^2=(f)$ for some $f\in A$. In view of $V(\mathfrak{a})=V((f))$ and $\sqrt{\mathfrak{a}}=\sqrt{(f)}$ we may assume that $\mathfrak{a}$ is a
principal ideal $(f)$. Consider $g\in I(V(f))$, so $\xi(f)=0\Rightarrow \xi(g)=0$ holds for all $\xi \in \cC_A\,$.
The composite places which are finite on the extension $A[\frac{1}{g}]$ are just the composite ones which are
finite on $A$ and satisfy $\xi(\frac{1}{g})\neq\infty$. By the contrapositive of the above implication, these
places also satisfy $\xi(\frac{1}{f})\neq\infty$. By assumption, the extension $A[\frac{1}{g}]$ has the intersection
property, so we obtain that $\frac{1}{f}\in A[\frac{1}{g}]$. From this, $g^k\in (f)$ follows for some $k\in \N$.
\end{proof}

\begin{remark}
1) \ That a ring $A$ admits the intersection property does not imply that the Nullstellensatz holds for finitely
generated ideals of $A$. To obtain this implication one really needs the hypothesis for all finitely generated
extensions as above. For an example, take $A=\cO_\lambda$ for some $\lambda \in M(F|R)$. Then $\cC_A=\{\lambda\}$
and the intersection property trivially holds. But if the rank of $\lambda$ is greater than $1$, then there is
$f\in \cO_\lambda$ with $\sqrt{(f)}\neq \cM_\lambda$, while $I(V(f))=\cM_\lambda\,$.

Pick any $x\in F\setminus \cO_\lambda\,$. Then there is no composite place which is finite on the extension
$\cO_\lambda[x]$. Hence this ring does not have the intersection property.

\pars
2) \ Assume that $A$ has the intersection property. Then $\cO_R$ is contained in $A$ since it is contained in
$\cO_\xi$ for all $\xi\in \cC_A\,$. It follows that for every $\lambda\in M(F|R)$ that is finite on $A$,
$\lambda(A)$ is a ring containing $\cO_R$, hence a real valuation ring of $R$.
This leads to the representation
\[
A\>=\>\bigcap\{\cO_{\pi_{\lambda(A)}\circ \lambda}\mid \lambda\in M(F|R)\text{ finite on }A\}\>,
\]
since the valuation rings on the right hand side are the minimal ones among all valuation rings $\cO_\xi$ with
$\xi\in \cC_A\,$.
\end{remark}

\begin{theorem}                                    \label{H(F|B)}
For every real valuation ring $B\subseteq R$, each finite ring extension of $H(F|B)$ within $F$
satisfies the Nullstellensatz for finitely generated ideals and has the intersection property.
\end{theorem}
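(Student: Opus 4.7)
The plan is to verify condition (3) of Proposition~\ref{nullstellensatz} for $A:=H(F|B)$, namely that every finite ring extension $A' := H(F|B)[x_1,\ldots,x_n]$ inside $F$ has the intersection property. Since $H(F) \subseteq H(F|B)$, the equivalences in Proposition~\ref{nullstellensatz} then upgrade this to condition (1) (the Nullstellensatz for finitely generated ideals in $H(F|B)$) and, via (1)$\Leftrightarrow$(2), to the Nullstellensatz in every finite extension, yielding both assertions of the theorem simultaneously.

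Using Lemma~\ref{f.g.}(1), rewrite $A' = H(F|B)[1+x^2]$ for a single $x \in F$. The inclusion $A' \subseteq \bigcap_{\xi \in \cC_{A'}} \cO_\xi$ is immediate, so I only need to show that if $y \in F \setminus A'$ then some $\xi' \in \cC_{A'}$ satisfies $\xi'(y) = \infty$. Because $A'$ is an overring of the Pr\"ufer domain $H(F)$ it is itself Pr\"ufer, so $y \notin A'$ forces the existence of a valuation overring $\cO_\eta \supseteq A'$ of $F$ with $\eta(y) = \infty$. As $\cO_\eta \supseteq H(F)$, the place $\eta$ is real; set $C := \cO_\eta \cap R$, a real valuation ring of $R$ with $B \subseteq C$, put $\wp := \pi_C$, and (by Baer-Krull) equip $F$ with an ordering compatible with $\eta$. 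Its restriction to $R$ is the unique ordering of the real closed field $R$, with which $\pi_C$ is automatically compatible.

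Now apply Theorem~\ref{MKPZr} with $\xi := \eta$, $\wp := \pi_C$, $a_1 := 1+x^2 \in \cO_\eta$ and $a_2 := y^{-1} \in \cM_\eta$. It produces a place $\lambda \in M(F|R)$ such that, with $\xi' := \pi_C \circ \lambda$, conclusion~(c) gives $1+x^2 \in \cO_{\xi'}$ and conclusion~(d) gives $y^{-1} \in \cM_{\xi'}$, i.e.\ $\xi'(y) = \infty$. Since $\xi'$ is a composite of real places it is real, so $\cO_{\xi'} \supseteq H(F)$; as $\lambda|_R = \id_R$, one has $\cO_{\xi'} \cap R = C \supseteq B$; combined with $1+x^2 \in \cO_{\xi'}$ this gives $\cO_{\xi'} \supseteq H(F|B)[1+x^2] = A'$, so $\xi' \in \cC_{A'}$. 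This contradicts the hypothesis $y \in \bigcap_{\xi \in \cC_{A'}} \cO_\xi$ and completes the intersection property for $A'$.

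The one delicate point is the joint availability of conclusions (c) and (d) of Theorem~\ref{MKPZr}. This is automatic when $\pi_C$ is nontrivial; in the residual case where $C = R$ (so $\pi_C$ is trivial) while $\eta$ is nontrivial, the theorem's final clause still realizes (c) and (d) simultaneously, at the cost of forgoing (f) and (g), neither of which is needed here.
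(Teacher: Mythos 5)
Your proof is correct and follows essentially the same route as the paper's: reduce via Proposition~\ref{nullstellensatz} to the intersection property for a single finite extension $H(F|B)[x]$ of $H(F|B)$, use the Pr\"ufer property to find a real valuation overring excluding the offending element, and apply assertions (c) and (d) of Theorem~\ref{MKPZr} to produce a composite place in $\cC_{A'}$ at which that element still has a pole. Your explicit verification of the ordering hypotheses and of the case where $\pi_C$ is trivial while $\eta$ is not is a welcome bit of care that the paper's own proof leaves implicit.
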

\begin{proof}
By Proposition~\ref{nullstellensatz}, it suffices to prove that any finite extension $A$ of 
$H(F|B)$ has the intersection property. Due to Lemma \ref{f.g.} we may write $A=H(F|B)[x]$ for
some nonzero $x\in F$. Suppose that there exists $f\in\bigcap_{\xi\in \cC_A}\cO_\xi$ with 
$f\notin A$. 
As said above, $A$ is the intersection of all real valuation rings in which it is contained. 
Hence we find a real place $\xi_0$ with $A\subseteq\cO_{\xi_0}$, $x\in \cO_{\xi_0}$ and 
$f\notin \cO_{\xi_0}\,$. Applying Theorem~\ref{MKPZr} with $\xi_0$ in place of $\xi$ and 
$\wp$ the restriction of $\xi_0$ to $R$, we obtain a place $\lambda\in M(F|R)$ which satisfies
assertions c) and d) of the theorem with $a_1=x$, $a_2=\frac{1}{f}$.
Then $\lambda(x)\in \cO_\wp$ and $\lambda(\frac{1}{f})\in \cM_\wp\,$, 
whence $\lambda(f)\notin \cO_\wp\,$.
We set $C:=\cO_\wp=\cO_{\xi_0}\cap R$, so $\lambda(x)\in C$. Since 
$H(F|B)\subseteq A\subseteq \cO_{\xi_0}\,$,
we also have that $B\subseteq C$. Hence by (\ref{cCH(F|B)}), $\pi_C\circ\lambda\in \cC_A\,$. But
$\lambda(f)\notin \cO_\wp$ implies that $f\notin \cO_{\pi_C\circ \lambda}$, a contradiction 
to our choice of $f$.
\end{proof}

The three distinguished cases of $A=H(F|B)$, $A=H(F)=H(F|\cO_R)$ and $A=H(F|R)$ deserve special
attention:
\begin{eqnarray}
H(F|B)&=&\bigcap \{\cO_{\pi_B\circ \lambda}\mid \lambda\in M(F|R)\}\>,\\
H(F)&=&\bigcap\{\cO_{\xi_R\circ \lambda}\mid\lambda\in M(F|R)\}     \label{H(F)}\\
&=& \bigcap\{\cO_{\xi}\mid\xi\in M_R(F)\}\>,\nonumber \\
H(F|R)&=&\bigcap\{\cO_\lambda\mid\lambda\in M(F|R)\}\>.
\end{eqnarray}
The presentation of $H(F|B)$ immediately yields the equality $H(F|B)\cap R=B$; in other words:
$H(F|B)$ is the smallest relative real holomorphy which extends the real valuation ring $B$. It
should be mentioned that this fact can be deduced more easily. In fact, choose any ordering $P$ 
of $F$ and consider the convex closure $V$ of $\Q$ in $F$ with respect to $P$. Then $V$ is a real
valuation ring of $F$ extending $\cO_R$, and one deduces that the real valuation ring $VB$ 
extends $B$. This implies the nontrivial inclusion $H(F|R)\cap R\subseteq B$.

As listed above, \textbf{$H(F)$ is the intersection of the family of valuation rings of the real places in
$M_R(F)$.} This is a straightforward and appealing geometric generalization of the situation in case of $R=\R$.

\parm
In what follows we address the question whether there are minimal representations for the relative real holomorphy
rings $H(F|B)$ of the type above. More precisely, we will study subfamilies $\cF\subseteq M(F|R)$
such that $H(F|B)=\bigcap_{\lambda \in \cF}\cO_{\pi_B\circ \lambda}$ and look at the existence of minimal
families $\cF$.
This is a topic dealt with by Sch\"ulting in \cite[3.13]{BS} and \cite[1.3 ff.]{Sch2}
for the case $B=R$. His results are incorporated. More generally, we
allow $B$ to range over all real valuation rings of the base field $R$.
\begin{theorem}                                  \label{Becker}
Let $B$, $C$ be two real valuation rings of $R$.  Then we have:
\sn
1) If $H(F|B)=H(F|C)$, then $B=C$;
\sn
2) if $B\neq R$, then the following statements are equivalent for each subset $\cF$ of $M(F|R)$:
\pars
(a) $H(F|B)\>=\> \bigcap_{\lambda \in \cF} \cO_{\pi_B\circ \lambda}\>$,
\pars
(b) $\cF$ is dense in $M(F|R)$;
\sn
3) if $B\neq R$, then there is no representation of the form (a) with a minimal $\cF$;
\sn
4) $H(F|C)$ admits a representation of the form (a) with a minimal $\cF$ if and only if $C=R$ and $\trdeg F|R=1$.
In the case of a minimal representation we necessarily have that $\cF=M(F|R)$.
\end{theorem}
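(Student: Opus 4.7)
My plan is to deduce the four parts of Theorem~\ref{Becker} in sequence, relying heavily on the preceding machinery.

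Part 1 follows immediately by intersecting $H(F|B)=H(F|C)$ with $R$: as noted just after Theorem~\ref{H(F|B)}, $H(F|B)\cap R=B$ (since every $\lambda\in M(F|R)$ restricts to $\id_R$, so $\cO_{\pi_B\circ\lambda}\cap R=B$), and the same for $C$; hence $B=C$.

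The central technical step for Part 2 is to prove, under the hypothesis $B\neq R$, that the set $U_f:=\{\lambda\in M(F|R):\lambda(f)\notin B\}$ is open in $\TopM(F|R)$ for every $f\in F$. The assumption $B\neq R$ forces $\cM_R\subseteq \cM_B$: otherwise any nonzero $y\in\cM_B$ would satisfy $y\notin\cO_R$, whence $1/y\in\cM_R\subseteq\cO_R\subseteq B$, contradicting $y\in\cM_B$. For any $b_0\in\cM_R^+\subseteq\cO_R\subseteq H(F)$, the element $g_{b_0}:=(b_0^2 f^2-1)/(1+f^2)\in H(F)$ cuts out a basic open $V(g_{b_0})\subseteq U_f$ (by convexity of $B$, $|\lambda(f)|>1/b_0$ forces $\lambda(f)\notin B$), and every element of $U_f$ lies in some such $V(g_{b_0})$ for $b_0$ chosen small enough depending on $\lambda_*(f)$. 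Given this openness, (b)$\Rightarrow$(a) is immediate: if $f\notin H(F|B)=\bigcap_{M(F|R)}\cO_{\pi_B\circ\lambda}$, then $U_f$ is nonempty open and density of $\cF$ yields $\lambda\in\cF\cap U_f$, giving $f\notin\bigcap_\cF\cO_{\pi_B\circ\lambda}$. For (a)$\Rightarrow$(b) I would argue contrapositively: if $\overline{\cF}\subsetneq M(F|R)$, pick $\lambda_*\notin\overline{\cF}$ in a basic open neighborhood $W=V(g_1,\ldots,g_m)$ disjoint from $\overline{\cF}$, and exhibit $f\in F$ with $\lambda_*(f)\notin B$ and $\lambda(f)\in B$ for all $\lambda\in\overline{\cF}$. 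A natural candidate is $f:=G/b_0$ with $G=g_1\cdots g_m\in H(F)$ and $b_0\in\cM_R^+$ chosen sufficiently small in terms of $\lambda_*(G)$ (e.g., $b_0=\lambda_*(G)^2$ if $\lambda_*(G)\in\cM_R$, else any $b_0$). Verifying this (or a suitable refinement employing sums of squares or localization in the Pr\"ufer ring $H(F|B)$) is the principal technical obstacle of the proof.

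Part 3 is then an immediate corollary of Part 2 and Theorem~\ref{topspaces}: the space $M(F|R)$ is Hausdorff without isolated points, so removing a single point from a dense subset leaves it dense. Hence if $\cF$ were minimal for (a), then $\cF\setminus\{\lambda_0\}$ would remain dense for any $\lambda_0\in\cF$, and by Part 2 would still satisfy (a), contradicting minimality.

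For Part 4 the case $C\neq R$ is covered by Part 3. When $C=R$ and $\trdeg F|R=1$, realize $F$ as the function field of a smooth projective curve over $R$ and invoke Riemann-Roch to produce, for each $\lambda_0\in M(F|R)$, an element $f\in F$ whose only pole is at the point corresponding to $\lambda_0$; this forces every representing set to contain $\lambda_0$, so $\cF=M(F|R)$ is the unique, and therefore minimal, representation. When $C=R$ and $\trdeg F|R>1$, suppose for contradiction a minimal $\cF$ exists and pick $\lambda_0\in\cF$ with witness $f$ satisfying $\lambda_0(f)=\infty$ and $\lambda(f)\in R$ for all $\lambda\in\cF\setminus\{\lambda_0\}$. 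Apply Theorem~\ref{MKPZr} with $\wp=\id_R$, $\xi=\lambda_0$ (nontrivial), and $a_1=1/f\in\cM_{\lambda_0}$; assertion (d) is realizable because $\wp$ is trivial, $\xi$ is not, and $\trdeg F|R>1$, and together with Proposition~\ref{MKPZa3} this yields infinitely many nonequivalent $\lambda\in M(F|R)$ with $\lambda(f)=\infty$. The remaining task is to show that at least one such $\lambda$ must lie in $\cF\setminus\{\lambda_0\}$, contradicting the witness property of $f$. I would address this final step, which I expect to be the principal obstacle of Part 4, by exploiting the freedom in Theorem~\ref{MKPZr} to impose additional sign and value conditions via assertions (c'), (e), (g) to pin down $\lambda$'s in prescribed neighborhoods of $\lambda_0$, combined with the observation that in transcendence degree $\geq 2$ the polar locus of any $f\notin H(F|R)$ is a codimension-one subvariety containing infinitely many $R$-rational places, which no candidate minimal $\cF$ can evade while still satisfying (a).
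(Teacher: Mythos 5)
Parts 1) and 3) are correct and coincide with the paper's proof, and your Riemann--Roch argument for the case $C=R$, $\trdeg F|R=1$ of part 4) is a legitimate variant of the paper's construction (which instead builds a Dedekind-domain affine coordinate ring inside $H(F|R)$ via Noether normalization). The remaining steps contain genuine gaps. In part 2), direction (b)$\Rightarrow$(a), your claim that $B\neq R$ forces $\cM_R\subseteq\cM_B$ is false: since $\cO_R\subseteq B$ one has $\cM_B\subseteq\cM_R$, and for a proper intermediate ring $\cO_R\subsetneq B\subsetneq R$ (which exists whenever the natural valuation of $R$ has rank $>1$) any $b_0\in\cM_R\setminus\cM_B$ satisfies $1/b_0\in B$, so $|\lambda(f)|>1/b_0$ does not force $\lambda(f)\notin B$, and $V(g_{b_0})\not\subseteq U_f$. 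This is repairable by taking $b_0\in\cM_B$ with $b_0>0$ (such elements exist precisely because $B\neq R$); after that fix your argument is in substance the paper's, which chooses a positive $a\in R\setminus B$ and uses convexity of $B$ together with the open set $\{\lambda\mid \lambda(f)=\infty \mbox{ or } |\lambda(f)|>a\}$.

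The direction (a)$\Rightarrow$(b) is the real gap in part 2). The candidate $f=G/b_0$ with $G=g_1\cdots g_m$ fails: for $\lambda\in\overline{\cF}$ you only know that $\lambda(g_i)\leq 0$ for some $i$ depending on $\lambda$, so $\lambda(G)$ may perfectly well be a unit of $\cO_R$, and then $\lambda(f)=\lambda(G)/b_0\notin B$, destroying the witness property. What is needed is a single $f\in H(F)$ that is a positive unit of $\cO_R$ at some rational place approximating the missing point and a negative unit of $\cO_R$ at every $\lambda\in\overline{\cF}$; the paper obtains such an $f$ from the Separation Criterion for spaces of $\R$-places (\cite[Proposition~9.13]{Lam}), applied inside $M(F)$ via the embedding $\iota_{F|R}$ and the density of $M_R(F)$, and then takes $g=1/(a-f)$ with $a=\lambda_0(f)$. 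No elementary manipulation of the $g_i$ replaces this separation tool. Likewise, in part 4) the case $C=R$, $\trdeg F|R>1$ is not proved: the nonexistence of minimal representations of $H(F|R)$ in transcendence degree $\geq 2$ is a theorem of Sch\"ulting (\cite[Section~1]{Sch2}, transferred to arbitrary real closed base fields via \cite[3.13]{BS}). Your sketch produces infinitely many $R$-rational places at which the witness $f$ has a pole, but gives no argument why any of them must belong to $\cF$; note that part 2) is unavailable for $B=R$, so $\cF$ need not be dense, and the ``polar locus'' heuristic does not close the argument.
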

\begin{proof}
1):
We showed above that $H(F|B)\cap R=R$; the same holds for $C$.
\sn
2): Assume that $\cF$ is not dense in $M(F|R)$. Hence by part 2) of Theorem~\ref{topspaces},
$\iota_{F|R}(\cF)$ is not dense in $M_R(F)$ and thus also not in $M(F)$. Let $N$ be the closure of
$\iota_{F|R}(\cF)$ in $M(F)$ and take $\eta \in M(F)\setminus N$. By the Separation Criterion given in
\cite[Proposition~9.13]{Lam}, there is $f \in H(F)$ such that $N\subseteq U(-f)$ and $\eta \in U(f)$.
Since $M_R(F)=\iota_{F|R}(M(F|R))$ is dense in $M(F)$ by Proposition~\ref{dense}, there is $\lambda_0 \in M(F|R)$
such that $\xi_R \circ \lambda_0 \in U(f)$ and thus $a:=\lambda_0(f)$ is an element of the set $E^+(R)$
of positive units of $\cO_R\,$. For $\lambda \in \cF$ we have that $-\lambda (f) \in E^+(R)$.
Define $g := \frac {1}{a-f}$. We have that $\lambda_0(g) = \infty$ and therefore $g \notin
\cO_{\pi_B\circ \lambda_0}$, whence $g\notin H(F|B)$. But for $\lambda \in \cF$ we have $\lambda(g) \in E^+(R)
\subseteq \cO_R \subseteq B$ and therefore $g \in \cO_{\pi_B\circ \lambda}$. Hence (a) cannot be true.
This proves that (a) implies (b).

Now we prove that (b) implies (a). We have that $H(F|B)\>\subseteq\> \bigcap_{\lambda \in \cF} \cO_{\pi_B\circ
\lambda}\>$. Suppose that equality does not hold. Then there is some $f \in F$ and $\lambda_0 \in M(F|R)$ such
that $\lambda_0(f)\notin B$ but $\lambda(f)\in B$ for all $\lambda\in\cF$. Then either $\lambda_0(f)=\infty$ or
$\lambda_0(f)\in R\setminus B$. In the first case we choose $a\in R\setminus B$ with $a>0$ (note that $B\subsetneq
R$ since $B\subsetneq C$). In the second case
we choose $a$ such that $2a=\lambda_0(f)$; switching $f$ to $-f$ if necessary, we may again assume that $a>0$.
In both cases we see that $-a<b<a$ for all $b\in B$ as $B$ is convex under the ordering $<$ of $R$.
We consider the set
\[
S\>:=\>\{\lambda\in M(F|R)\mid \lambda(f)=\infty\mbox{ or } |\lambda(f)|>a\}
\]
and note that it contains $\lambda_0\,$. With $g:=\frac{f}{1+f^2}\in H(F)$, the condition defining $S$ holds if
and only if $\lambda(g)=0$ or $|\lambda(g)|<\frac{1}{1+a^2}=:b$, which means that $-b<\lambda(g)<b$. This shows
that $S$ is an open subset of $M(F|R)$. By the density of $\cF$ in $M(F|R)$ there is some $\lambda\in \cF\cap S$.
Consequently, $\lambda(f)=\infty$ or $|\lambda(f)|>a$ so that $\lambda(f)\notin B$, a contradiction to our
assumption on $\lambda(f)$. Thus equality, and hence (a), must hold.

\sn
3:) Assume that $\cF$ is a dense subset of $M(F|R)$ and that $\eta \in \cF$. We wish to show that 
$\cF\setminus \{\eta\}$ is still dense in $M(F|R)$. Suppose not. Then there is an nonempty open
subset $V$ of $M(F|R)$ such that $\cF\cap (V\setminus\{\eta\}) = (\cF\setminus\{\eta\})\cap
V=\emptyset$. But by part 3) of Theorem~\ref{topspaces}, $V$ is infinite and hence $V\setminus
\{\eta\}$ is a nonempty open set, so we obtain a contradiction to the density of $\cF$.

\sn
4:) Assume first that $H(F|C)$ admits a minimal representation. Due to part 3) of our theorem we obtain that
$C=R$. We are therefore dealing with the case that $H(F|R)$ admits a minimal representation.
Then necessarily $\trdeg(F|R)=1$; this is proven by Sch\"ulting in \cite[Section 1]{Sch2} for
$R=\R$. Transferring the arguments, one can deduce this from \cite[3.13]{BS} also for the case 
of any real closed base field.

Now consider $H(F|R)$ in the case of $\trdeg(F|R)=1$ and choose an arbitrary $\eta\in M(F|R)$.
We are going to prove that even the family $M(F|R)\setminus \{\eta\}$ does not provide the
intersection $(a)$ for $H(F|R)$. This will imply claim $4)$.

We start by constructing a smooth affine model $X$ on which all places in $M(F|R)$ admit a
center. Assume $F=R(x_1,\dots,x_n)$, then set $x_0=1$ and $y_i=x_i/\sum_{j=0}^n x_j^2$ for 
$i=0,\dots,n$. The elements $y_i$ belong to $H(F|R)$ and generate the function field $F$ over
$R$. Setting $S=R[y_0,\dots,y_n]$ we have found an affine $R$-algebra inside $H(F|R)$
with quotient field $F$. 

Take the integral closure $T$ of $S$ in $F$. We still have that $T\subseteq H(F|R)$ as the
Pr\"ufer ring $H(F|R)$ is integrally closed. Due to the Noether Normalization Theorem we obtain that
$T$ is an affine $R$-algebra, say $T=R[t_1,\dots,t_m]$. We observe that $T$ is a noetherian,
integrally closed ring of dimension $1$; in other words, we have constructed a smooth affine
model $X$ with the Dedekind domain $T\subseteq H(F|R)$ as its coordinate $R$-algebra.

Given any $\lambda \in M(F|R)$, it is finite on $H(F|R)$, so it induces a $R$-epimorphism $\phi:
T\rightarrow R$ the kernel of which is the maximal ideal $\mathfrak{m}$ of a point $a\in X(R),$
the center of $\lambda$. As $T$ is a Dedekind domain, the local ring at the point $a$, i.e.,
$T_\mathfrak{m}$, is a valuation ring; it is also contained in the valuation ring of $\lambda$.
Using again that $T$ is a Dedekind ring, we see that both valuation rings must be equal. Finally, we find
that $\lambda$ is determined by the induced epimorphism $\phi$.

Now consider the place $\eta\in M(F|R)$ chosen above. The elements $s_i=t_i-\eta(t_i), \ i=1,\dots,m$, also generate the algebra $T$. Thus if
$\lambda \neq \eta$, then $\lambda(s_i)\neq \eta(s_i)=0$ for some $i$. It follows that for
$s=\sum_{1\leq i\leq m} s_i^2$, the element $1/s$ lies in the valuation ring of
$\lambda$ but not in the valuation ring of $\eta$. This shows that $\bigcap_{\lambda \in M(F|R)\setminus\{\eta\}}
\cO_\lambda$ is strictly larger than $H(F|R)$.
\end{proof}

\parm
For a function field $F$ over a non-archimedean real closed field $R$ we will give yet another, geometric
representation of $H(F)$ which makes use of of the family of all smooth projective models of $F$. In doing so we base our arguments on Hironaka's celebrated theory of the resolution of singularities in characteristic zero. One may consult Kollar's excellent presentation of this theory, in particular Chapter 3 of his book \cite{Ko}.

Consider any smooth projective model $X$ of $F$. For $x \in X$ we denote by
$\cO_x$ the local ring in $x$ and by $X(R)$ the set of rational points of $X$. For $f\in F$, by $X_f$ we
denote the set of those rational points for which $f \in \cO_x$, i.e., $f$
is defined in $x$. Note that $X_f$ is open in the Zariski topology on $X(R)$ and there exists an open nonempty affine subvariety $Y$ with $f\in \cO(Y)$, implying that $Y(R)\subseteq X_f$. As $X$ is smooth, every point in $X(R)$
is the center of some $\lambda\in M(F|R)$.

Using these facts, Artin's solution to Hilbert's 17th Problem can be rephrased as follows:
\[
f \in \sum F^2 \Leftrightarrow f(x)\geq 0  \text{ for every } x\in X_f\>.
\]
The implication "$\Rightarrow$" can be proven as follows: the point $x\in X_f$ is the center of a place $\lambda \in M(F|R)$ and $f\in \cO_x$, hence $f=\sum_1^k f_i^2$ is contained in the valuation ring $V$ of $\lambda$. Now, this valuation ring is a real valuation ring, which implies that each $f_i\in V$. Therefore, $f(x)=\lambda(f)=\sum_i \lambda(f_i)^2 = \sum_i f_i(x)^2\geq 0$. Concerning the other implication, we note that $f$ is defined on an affine model $Y$ of $F$ and non-negative on $Y(R)$. Artin's theorem then states that $f\in \sum F^2$.

\vspace{.2cm}
The above characterization suggests to look for a geometric characterization of $H(F)$ by appealing to the sets $X_f$ whenever $f\in H(F)$.  

Take a function $f \in H(F)$ and take $x \in X(R)$ such that $f \in \cO_x$.
Since $x$ is the center of some $\lambda\in M(F|R)$, we obtain that
$f(x) = \lambda(f)$. Since $\xi_R\circ \lambda$ is an $\R$-place and $f \in H(F)$, we have that
$\xi_R\circ \lambda(f)\neq \infty$. Therefore, $\lambda(f) \in \cO_R=H(R)$.
We have shown:
\[
f \in H(F)\Rightarrow f(x)\in H(R)\text{ for every } x\in X_f\>.
\]
The converse is in general not true. If $R$ is an archimedean real closed field, then
$H(R) = R$ and the right hand side of the implication is always true, while the left hand side is not.

To understand what is going on, we have to turn again to the relative real holomorphy ring
$H(F|R)$ and its geometrical description given by Sch\"{u}lting in \cite{Sch}:
\begin{equation}                                     \label{geodes}
H(F|R) \>=\> \{ f\in F\mid f \text{ is bounded on } X_f \text{ by elements of } R\}\>.
\end{equation}

For a smooth real projective variety $X$, define
\[
H_X \>:=\> \{f\in F\mid f(x) \in H(R) \text{ for every } x\in X_f\}\>.
\]
Then $H(F) \subseteq H_X$. But functions in $H_X$ are not necessarily bounded on $X_f$ in the case of an
archimedean ordered base field $R$. But in the non-archimedean case every function in $H_X$ is bounded by the
elements with negative values under $v_R$. Therefore, for $R$ non-archimedean we have:
\begin{equation}                            \label{H_X}
H(F) \>\subseteq\> H_X \>\subseteq\> H(F|R)\>,
\end{equation}
where the latter inclusion follows from (\ref{geodes}).
\begin{proposition}                          \label{thhr}
Take a function field $F$ over a non-archimedean real closed field $R$.
Then $H(F)$ is the intersection of the sets $H_X$ where $X$ runs through all smooth projective models of $F$.
\end{proposition}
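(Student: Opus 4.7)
The inclusion $H(F) \subseteq \bigcap_X H_X$ is already recorded in~(\ref{H_X}), so I will establish the opposite inclusion by contrapositive: given $f \in F \setminus H(F)$, I shall exhibit a smooth projective model $X$ of $F$ together with a rational point $x \in X_f$ at which $f(x) \notin H(R)$. By the representation~(\ref{H(F)}), the hypothesis yields a place $\lambda \in M(F|R)$ with $\xi_R \circ \lambda(f) = \infty$, so either $\lambda(f) = \infty$ or $\lambda(f) \in R \setminus \cO_R$.

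First I reduce to the second case. Suppose $\lambda(f) = \infty$; then $1/f \in \cM_\xi$ where $\xi := \xi_R \circ \lambda$, and $\xi$ is compatible with some ordering of $F$ by the Baer--Krull Theorem. Since $R$ is non-archimedean, $\wp := \xi_R$ is nontrivial, so Theorem~\ref{MKPZr} allows assertions~{\it (d)} and~{\it (f)} to be realized simultaneously. Applied to $a_1 := 1/f$, the theorem produces $\lambda' \in M(F|R)$ with $\lambda'(1/f) \in \cM_{\xi_R} \setminus \{0\}$, whence $\lambda'(f) = 1/\lambda'(1/f) \in R \setminus \cO_R$. Replacing $\lambda$ by $\lambda'$, I may assume henceforth that $\lambda(f) \in R \setminus \cO_R$; in particular $f \in \cO_\lambda$.

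For the geometric construction, start from any projective model $Y_0$ of $F$ and let $\Gamma \subseteq Y_0 \times_R \mathbb{P}^1_R$ denote the closure of the graph of the rational map $Y_0 \dashrightarrow \mathbb{P}^1_R$ defined by $f$. Then $\Gamma$ is a projective $R$-variety with $R(\Gamma) = F$, the first projection is birational, and the second projection extends $f$ to a morphism $\Gamma \to \mathbb{P}^1_R$. Hironaka's resolution of singularities in characteristic zero (cf.~Chapter~3 of~\cite{Ko}) supplies a birational morphism $X \to \Gamma$ with $X$ smooth projective; the composition $\phi \colon X \to \mathbb{P}^1_R$ is a morphism of $R$-schemes, and $X$ is a smooth projective model of $F$.

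By the valuative criterion of properness applied to $X \to \Spec R$, the place $\lambda$ has a unique center $x \in X$, and the inclusion $R \subseteq k(x) \hookrightarrow F\lambda = R$ forces $k(x) = R$, so $x$ is $R$-rational. If $\phi(x)$ were the point at infinity of $\mathbb{P}^1(R)$, then the local coordinate $1/f$ around $\infty$ would lie in $\mathfrak{m}_x \subseteq \mathfrak{m}_\lambda$, forcing $\lambda(f) = \infty$ and contradicting $\lambda(f) \in R$. Hence $\phi(x) \in R \subset \mathbb{P}^1(R)$, which means $f \in \cO_x$ and $f(x) = \phi(x) = \lambda(f) \notin \cO_R = H(R)$; this witnesses $f \notin H_X$. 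I expect the main obstacle to be the reduction from $\lambda(f) = \infty$ to $\lambda(f) \in R \setminus \cO_R$, which genuinely uses the non-archimedean hypothesis via the simultaneous realizability of~{\it (d)} and~{\it (f)} in Theorem~\ref{MKPZr}; once $\lambda(f)$ lies in $R$, the geometric step is routine given Hironaka.
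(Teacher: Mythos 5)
Your argument is correct, but it runs along a genuinely different track than the paper's. The paper argues forward: for $f\in\bigcap_X H_X$, the non-archimedean hypothesis gives $f\in H(F|R)$ via (\ref{H_X}), then a cited theorem of Sch\"ulting (\cite{Sch}, p.~437) supplies a \emph{single} smooth projective model $X_0$ on which $f$ is regular at \emph{every} $R$-rational point; since each $\lambda\in M(F|R)$ centers at such a point, $\lambda(f)=f(c(\lambda))\in H(R)$, and (\ref{H(F)}) finishes. You instead argue by contrapositive and must manufacture, for a given $f\notin H(F)$, one model and one point witnessing $f\notin H_X$; the two nontrivial ingredients you need for this are (i) the upgrade of the witness place so that $\lambda(f)$ is finite but infinitely large, which you correctly extract from the simultaneous realizability of assertions {\it (d)} and {\it (f)} in Theorem~\ref{MKPZr} when $\wp=\xi_R$ is nontrivial (this is indeed exactly where non-archimedeanness enters for you, whereas the paper uses it in (\ref{H_X})), and (ii) the graph-closure-plus-Hironaka construction making $f$ a morphism to $\mathbb{P}^1_R$, so that the valuative criterion places the center of $\lambda$ in $X_f$ with $f(x)=\lambda(f)\notin\cO_R$. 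The trade-off: the paper's route is shorter and delegates the geometry to Sch\"ulting's regularity theorem, which handles all places at once on one model; your route avoids that citation entirely (using only resolution of singularities, which the paper already assumes) at the cost of invoking the density machinery of Theorem~\ref{MKPZr} a second time and of building a model adapted to the single function $f$. Both proofs are complete; note only that your reliance on (\ref{H(F)}) means your argument, like the paper's, ultimately rests on Theorem~\ref{H(F|B)} and hence on Theorem~\ref{MKPZr} anyway, so no circularity is introduced.
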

\begin{proof}
As we observed before, $H(F) \subseteq H_X$ for any smooth model of $F$. Therefore,
$H(F) \subseteq \bigcap \{H_X\mid  X \text{ smooth projective model of } F\}$.

Assume that $f$ is in the intersection of the sets $H_X$. Since $R$ is non-archimedean, (\ref{H_X}) shows that $f$
is in $H(F|R)$. By a theorem of Sch\"{u}lting (see \cite[page 437]{Sch}) there is
a smooth projective model $X_0$ such that $f$ is regular in every point of $X_0(R)$.
Take any $\R$-place $\xi$ such that $\xi = \xi_R \circ \lambda$,  $\lambda \in M(F|R)$.
Since $f \in H(F|R)$, we have $f \in \cO_{\lambda}$. The place $\lambda$ has a center $c(\lambda)$ on the
projective model, so $c(\lambda)\in X_0(R)$.
Then $\lambda(f)=f(c(\lambda)) \in H(F)$ by our assumption. This means that $\xi(f) \neq \infty$, so
$f \in \cO_{\xi}$ for every $\xi \in M_R(F)=\{\xi_R \circ\lambda \mid \lambda\in M(F|R)\}$. Thus $f \in
\bigcap\{\cO_{\xi_R\circ \lambda}\mid\lambda\in M(F|R)\}$, which by (\ref{H(F)}) is equal to $H(F)$.
\end{proof}

\pars
Note that this theorem is not true for an archimedean real closed field $R$ since in this case $H_X=F$ for every
smooth projective model $X$ of $F$.

\mn
%
%
\section{The Real Spectrum of $H(F|R)$ and $H(F)$}       \label{secSper}
%
As before, $F$ denotes a formally real function field over a real closed base field $R$.
\\The topologies on $M(F|R)$ and $M(F)$ find natural interpretations via the theory of the real spectrum
$\Spec_r(A)$ of a commutative ring $A$. Regarding general concepts and results we refer to \cite[Chapter 7]{BCR}
and \cite[Kapitel III]{KS}; however, note that the authors of the latter reference are using the notation $Sper A$
for the real spectrum of $A$. The real spectrum $\Spec_r(A)$ is a quasi-compact space; we
reserve the term ``compact'', in contrast to the use in \cite{BCR}, for quasi-compact Hausdorff spaces.
It is its compact subspace of closed points $\MaxSpec_r(A)$ that we are mainly interested in.

\pars
In our situation, we will prove:
\begin{proposition}      \label{diagram}
There is a commutative diagram
\begin{center}
$
\begin{matrix}
M(F|R) & \stackrel{i}{\longrightarrow} & \MaxSpec_r(H(F|R)) \\
\bigg\downarrow~ \iota_{F|R} &\text{///} &\bigg\downarrow~ \tau\\
M(F) & \stackrel{j}{\longrightarrow} & \MaxSpec_r(H(F))
\end{matrix}
$
\end{center}
where
\begin{enumerate}
\item the maps  $i,\iota_{F|R}$ are topological embeddings with dense images,
\item the map  $j$  is a homeomorphism,
\item the map  $\tau$  is continuous and surjective.
\end{enumerate}
\end{proposition}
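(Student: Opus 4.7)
The plan is to realize each map concretely via the standard identification between real places and prime cones on holomorphy rings, and then deduce the topological claims by combining compactness of the real spectra with Proposition~\ref{dense}. For $\xi\in M(F)$ I set $j(\xi):=\{f\in H(F)\mid \xi(f)\geq 0\}$; this makes sense because $H(F)\subseteq \cO_\xi$, and it is a prime cone on $H(F)$ with support $\cM_\xi\cap H(F)$. Similarly, for $\lambda\in M(F|R)$ I set $i(\lambda):=\{f\in H(F|R)\mid \lambda(f)\geq 0\}$, a prime cone on $H(F|R)$ with support $\cM_\lambda\cap H(F|R)$. The inclusion $H(F)\hookrightarrow H(F|R)$ induces a continuous restriction $\Spec_r H(F|R)\to \Spec_r H(F)$, $\alpha\mapsto \alpha\cap H(F)$; since $H(F)$ is a Pr\"ufer ring its real spectrum is tree-like under specialization, so every prime cone admits a unique closed specialization. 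I take $\tau$ to be this restriction composed with the resulting continuous retraction onto $\MaxSpec_r H(F)$.

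Next I would verify that $i(\lambda)$ and $j(\xi)$ actually lie in $\MaxSpec_r$, and that the square commutes. For $j(\xi)$ the quotient $H(F)/(\cM_\xi\cap H(F))$ equals the archimedean field $F\xi\subseteq \R$, so the support is maximal; for $i(\lambda)$ the restriction $\lambda|_{H(F|R)}$ surjects onto $R$ (since $\lambda$ fixes $R\subseteq H(F|R)$), so its kernel is maximal. Commutativity is then a direct check: for $f\in H(F)$ one has $\lambda(f)\in H(R)=\cO_R$, and compatibility of $\xi_R$ with the ordering of $R$ forces $\lambda(f)\geq 0 \Rightarrow \xi_R(\lambda(f))\geq 0$. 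Hence $i(\lambda)\cap H(F)\subseteq j(\xi_R\circ\lambda)=j(\iota_{F|R}(\lambda))$, i.e.\ the restriction of $i(\lambda)$ specializes to the closed cone $j(\iota_{F|R}(\lambda))$; uniqueness of maximal specializations in the Pr\"ufer real spectrum then forces $\tau(i(\lambda))=j(\iota_{F|R}(\lambda))$.

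For the topological properties, $j$ is continuous because the preimage of a subbasic open $\{\alpha\in\Spec_r H(F)\mid f>0\text{ in }\alpha\}$ with $f\in H(F)$ is precisely the basic open $U(f)\subseteq M(F)$; injectivity is clear since distinct $\R$-places are separated by some $f\in H(F)$; surjectivity follows from identifying closed cones of $\Spec_r H(F)$ with minimal real valuation overrings of $H(F)$, whose residue fields are archimedean (else a further real refinement would exist) and therefore embed uniquely in $\R$ as ordered fields. Both $M(F)$ and $\MaxSpec_r H(F)$ being compact Hausdorff, the continuous bijection $j$ is a homeomorphism. The same analysis applied to $H(F|R)$ shows that $i$ is a continuous bijection onto $\MaxSpec_r H(F|R)$: minimality there forces the residue field of the overring to equal $R$, because any real algebraic extension of the real closed $R$ equals $R$, and any transcendental one would admit a further real $R$-rational place by Theorem~\ref{exrp}. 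Thus $i$ is itself a homeomorphism, a fortiori a topological embedding with dense image. Continuity of $\tau$ follows from functoriality of $\Spec_r$ together with continuity of the retraction. For surjectivity: by commutativity, the image of $\tau$ contains $j(M_R(F))$, which is dense in $\MaxSpec_r H(F)$ via Proposition~\ref{dense} and the homeomorphism $j$; since $\tau$ sends a compact space continuously into a Hausdorff one its image is closed, and a closed set containing a dense subset is the whole space.

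The main obstacle I anticipate is the correct definition of $\tau$: when $R$ is non-archimedean, the naive restriction $\alpha\mapsto \alpha\cap H(F)$ fails to preserve closedness, because $\cO_\lambda$ then admits proper real refinements through the nontrivial real places of $R$, so $\cM_\lambda\cap H(F)$ is not maximal in $H(F)$. Handling this requires the retraction onto the unique closed specialization in the Pr\"ufer real spectrum of $H(F)$, whose existence and continuity are standard but should be invoked explicitly. After this is in place the argument is routine bookkeeping, with Proposition~\ref{dense} providing exactly the density needed to upgrade the image of $\tau$ to all of $\MaxSpec_r H(F)$.
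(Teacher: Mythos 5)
Your overall architecture --- the concrete prime-cone formulas for $i$ and $j$, the definition $\tau=\rho\circ\res$, commutativity via $\alpha_\lambda\cap H(F)\subseteq\alpha_{\xi_R\circ\lambda}$ together with uniqueness of the maximal specialization, and surjectivity of $\tau$ from compactness plus the density of $M_R(F)$ --- is the same as the paper's. But two of your key claims are false, and both stem from the same conflation of ``maximal prime cone'' with ``maximal support'' (equivalently, with minimality of the associated valuation overring). First, $H(F)/(\cM_\xi\cap H(F))$ is \emph{not} the field $F\xi$: since $H(F)$ is Pr\"ufer with $\cO_\xi=H(F)_{\mathfrak{p}_\xi}$, the quotient is only a subring of $F\xi$ whose quotient field is $F\xi$ (by Sch\"ulting it equals $H(F\xi)$), and for non-archimedean $R$ the residue field of a general $\xi\in M(F)$ can have positive transcendence degree over $\Q$, hence nontrivial real valuation rings, so $\mathfrak{p}_\xi=\supp(\alpha_\xi)$ is not maximal. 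The paper flags exactly this point and instead proves that $\alpha_\xi$ is a maximal \emph{prime cone} because the induced ordering on its residue field is archimedean: a proper specialization $\beta\supsetneq\alpha_\xi$ forces $\supp(\beta)\supsetneq\supp(\alpha_\xi)$, and any $a\in\supp(\beta)\setminus\supp(\alpha_\xi)$ would be a nonzero infinitesimal for that archimedean ordering. You need this argument; maximality of the support is simply not available.

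Second, $i$ is \emph{not} a homeomorphism, so your ``a fortiori'' derivation of the embedding-with-dense-image statement collapses. The space $\MaxSpec_r(H(F|R))$ contains maximal prime cones of the form $P\cap H(F|R)$ with support $(0)$, where $P$ is an ordering of $F$ archimedean over $R$; the paper's concluding Remark exhibits such cones explicitly ($P'\cap A$, $Q'\cap A$ there), and none of them is an $\alpha_\lambda$, since $\supp(\alpha_\lambda)=\mathfrak{p}_\lambda$ is maximal. Your surjectivity argument (``minimality forces the residue field to equal $R$'') fails because a maximal prime cone need not have a minimal valuation overring --- the trivial valuation ring $F$ with an $R$-archimedean ordering already gives one. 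Consequently the density of the image of $i$ must be proved separately, as in the paper: given a nonempty basic open set $\widetilde{\cU}(a_1,\dots,a_n)\cap\MaxSpec_r(H(F|R))$ containing some $\alpha$, pull the ordering $\bar{\alpha}$ back to an ordering of $F$ under which $a_1,\dots,a_n>0$ and invoke Theorem~\ref{exrp} (as in Proposition~\ref{TopMF|R}) to produce $\lambda\in M(F|R)$ with $\lambda(a_i)>0$, so that $\alpha_\lambda$ lies in the given set. Your surjectivity argument for $j$ suffers from the same flaw, though there the conclusion happens to be true; the correct route is via $H(F)/\mathfrak{p}=H(k(\mathfrak{p}))$ and the real place induced by the ordering $\bar{\alpha}$ on $k(\mathfrak{p})$, which composes with the place of $H(F)_{\mathfrak{p}}$ to give $\xi\in M(F)$ with $\alpha\subseteq\alpha_\xi$.
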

Using this proposition and results from real algebraic geometry over arbitrary real closed fields
(see \cite{BCR,DK}), we can prove:
\begin{proposition}                \label{components}
$M(F)$ has only finitely many connected components.
\end{proposition}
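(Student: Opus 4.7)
The plan is to use Proposition~\ref{diagram} to reduce the statement to a finiteness result about the real spectrum of a ring naturally associated with a smooth projective model of $F$, and then to invoke standard finiteness theorems from real algebraic geometry over a real closed field.

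First, by Proposition~\ref{diagram}, $j\colon M(F)\to\MaxSpec_r(H(F))$ is a homeomorphism and $\tau\colon\MaxSpec_r(H(F|R))\to\MaxSpec_r(H(F))$ is a continuous surjection. Since a continuous surjection cannot increase the number of connected components (the target is a finite union of images of the components of the source, each connected), it suffices to show that $\MaxSpec_r(H(F|R))$ has only finitely many connected components.

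Next, by Hironaka's resolution of singularities in characteristic zero, I fix a smooth projective model $X$ of $F$ over $R$, as was already done in the proof of Proposition~\ref{thhr}. Covering $X$ by finitely many affine opens $X=\bigcup_{i=1}^N U_i$ with coordinate rings $A_i$ that are finitely generated $R$-algebras, the real spectrum $\Spec_r(X)$ is obtained by gluing the spaces $\Spec_r(A_i)$ along their overlaps. By the fundamental finiteness theorem of real algebraic geometry over a real closed field (see \cite[Chapter~7]{BCR} and \cite{DK}), each $\Spec_r(A_i)$ has only finitely many connected components, and hence so does $\Spec_r(X)$. I then construct a natural continuous surjection $\Phi\colon\Spec_r(X)\to\MaxSpec_r(H(F|R))$: to a prime cone on $X$, i.e.\ a pair $(p,P)$ consisting of a scheme-theoretic point $p\in X$ and an ordering $P$ on the residue field $\kappa(p)$, I associate the maximal prime cone of $H(F|R)$ obtained by composing the canonical chain $H(F|R)\subseteq\cO_{X,p}\twoheadrightarrow\kappa(p)$ with the natural $\R$-place of $(\kappa(p),P)$. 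Continuity of $\Phi$ is routine, and surjectivity follows from the valuative criterion of properness (giving every real place of $F$ trivial on $R$ a unique center on the projective variety $X$) together with Sch\"ulting's theorem \cite[p.~437]{Sch}, already used in the proof of Proposition~\ref{thhr}. Being a continuous surjection, $\Phi$ transfers the finiteness of connected components from $\Spec_r(X)$ to $\MaxSpec_r(H(F|R))$, which closes the argument via the first step.

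The main technical obstacle is the verification of surjectivity of $\Phi$: every maximal real prime cone of $H(F|R)$ must come from an ordered scheme point $(p,P)$ of the fixed smooth projective model $X$. This requires a careful use of the valuative criterion together with Sch\"ulting's geometric characterization of $H(F|R)$, in order to control the residue field orderings and to ensure that no closed point of $\Spec_r(H(F|R))$ is overlooked; passing to a model dominating $X$ may in principle be needed, but by Sch\"ulting's result one can always return to $X$ itself.
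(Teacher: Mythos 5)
Your opening reduction --- using the homeomorphism $j$ and the continuous surjection $\tau$ from Proposition~\ref{diagram} to reduce the claim to the finiteness of the set of components of $\MaxSpec_r(H(F|R))$ --- is exactly the paper's first step, and your ultimate appeal to the finiteness of the semialgebraically connected components of algebraic sets over real closed fields is also the paper's final ingredient. The problem lies in the middle: your map $\Phi$ is not well defined. For a fixed smooth projective model $X$ with $\trdeg F|R\geq 2$, the containment $H(F|R)\subseteq\cO_{X,p}$ fails at (real) points $p$ of codimension at least $2$. Concretely, for $F=R(x,y)$ and $X=\mathbb{P}^2_R$, the element $f=x^2/(x^2+y^2)$ satisfies $0\leq f\leq 1$ in every ordering of $F$ (both $f$ and $1-f$ are sums of two squares), hence $f\in H(F)\subseteq H(F|R)$; yet $f\notin\cO_{X,p}$ at the origin, since $x^2+y^2$ is irreducible over $R$ and does not divide $x^2$. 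So the ``canonical chain'' you compose with does not exist at such prime cones of $X$, and this cannot be repaired by passing to a model dominating $X$: Sch\"ulting's theorem produces, for each single $f\in H(F|R)$, a model on which $f$ is regular at all real points, but no single smooth projective model does this for all of $H(F|R)$ simultaneously --- which is precisely why his description of $\Spec_r(H(F|R))$ runs over the inverse system of all complete models rather than one of them. (The containment you need does hold at points of codimension $\leq 1$, where $\cO_{X,p}$ is $F$ itself or a real discrete valuation ring trivial on $R$; this is why your construction would go through in transcendence degree $1$, but not in general.)

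The paper avoids constructing any such map. It first transfers the count of components from $\MaxSpec_r(H(F|R))$ to $\Spec_r(H(F|R))$ using the fact that the specialization retraction $\rho$ induces a bijection between the connected components of $\Spec_r(A)$ and those of $\MaxSpec_r(A)$ (\cite[p.~129, Satz 6]{KS}), and then invokes Sch\"ulting's theorem \cite[p.~436]{Sch}, which already encapsulates the passage from $\Spec_r(H(F|R))$ to the real spectra of models together with the finiteness of their semialgebraic components. If you wish to keep an explicit-map strategy, the workable direction is the opposite one, used elsewhere in the paper: restrict prime cones of $H(F|R)$ to the coordinate ring $T\subseteq H(F|R)$ of a suitable affine model. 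But a continuous map \emph{into} a space with finitely many components does not by itself bound the components of the source, so you would still need to show that this restriction identifies components --- and that is again the content of Sch\"ulting's theorem, not something you can get for free from the valuative criterion of properness.
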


It was already known that the space $M(F)$ of a rational function field $F=R(X_1,\dots,X_n)$ is
connected, see \cite[Theorem 2.12]{Ha}.

\pars
Let $A$ denote any commutative ring. By definition, the real spectrum $\Spec_r(A)$, as a set, is the collection of
all \bfind{prime cones} $\alpha \subsetneq A$ satisfying the conditions
\[
\alpha+\alpha\subseteq \alpha,\, \alpha\cdot \alpha \subseteq \alpha,\, \alpha \cup \alpha =A,\,
\alpha\cap -\alpha \text{ is a prime ideal of } A\,.
\]
Let a prime cone $\alpha$ be given. We set $\supp(\alpha)=\alpha\cap-\alpha$. This prime ideal is called the
\bfind{support} of $\alpha$. By the \bfind{residue field of $\alpha$} we will mean the quotient field of
$A/\supp(\alpha)$. Given any $a\in A$ and $\alpha \in \Spec_r(A)$, we write $a(\alpha):=a+\supp(\alpha)$.
An ordering $\bar{\alpha}$ with order relation $\leq_\alpha$ (or in short, $\leq$) is induced by requiring, for all $a\in A$,
\[
0\leq a(\alpha) \>\Leftrightarrow\> a\in \alpha \>
\]
hence $a(\alpha)>0\Leftrightarrow a\in\alpha\wedge -a\notin\alpha$.

The topology on $\Spec_r(A)$ is defined by the following family of basic open sets:
\[
\widetilde{\cU}(a_1,\dots,a_n)=\{\alpha\mid a_1(\alpha)>0,\dots,a_n(\alpha)>0\}
\]
for all $n\in \mathbb{N}, a_1,\dots,a_n \in A$. As ring homomorphisms $\phi: A\rightarrow B$ are well behaved with
respect to the assignment $A\mapsto \Spec_r(A)$, we are dealing with a contravariant functor $\Spec_r$ from the
category of rings to the category of quasi-compact spaces. Here we will only be using the simplest case, where $A$
is a subring of the ring $B$. It is readily seen that we obtain a continuous map, the restriction
\[
\res\>=\> \res_{A,B}:\> \Spec_r(B)\rightarrow \Spec_r(A), \;\;\alpha \mapsto \alpha\cap A\>.
\]
If $\alpha,\beta\in \Spec_r(A)$ satisfy $\alpha\subseteq \beta$, then $\beta$ is called a
\bfind{specialization} of $\alpha$ and $\alpha$ a \bfind{generalization} of $\beta$. The specializations of
a given prime cone $\alpha$ form a totally ordered set with respect to inclusion, and there is a unique maximal
specialization of $\alpha$, denoted by $\rho(\alpha)$. The maximal prime cones are exactly the closed points in
$\Spec_r(A)$. For example, a prime cone whose support is a maximal ideal is a maximal prime cone. We set
\[
\MaxSpec_r(A)\>=\> \{\alpha\in \Spec_r(A)\mid \alpha \text{ maximal}\,\}\>.
\]
It turns out that the subspace $\MaxSpec_r(A)$ is compact and that the specialization map
\[
\rho\>=\> \rho_A:\> \Spec_r(A)\rightarrow \MaxSpec_r(A), \;\; \alpha\mapsto \rho(\alpha)
\]
is continuous and a closed retraction, see \cite[7.1.25]{BCR} and \cite[p.128, Satz 5]{KS}. By composing the
assignment
$A\mapsto \Spec_r(A)$ with the specialization map, we obtain a functor $A\mapsto \MaxSpec_r(A)$ into the category
of compact spaces. In the case where $A$ is a subring of $B$ we obtain the continuous map
\[
\tau=\tau_{A,B}:=\rho_A\circ \res_{A,B}:\> \MaxSpec_r(B)\rightarrow \MaxSpec_r(A)\>.
\]
In what follows we will use the following, easily proven observation: if $\beta$ is a specialization of $\alpha$
and $\supp(\alpha)=\supp(\beta)$, then $\alpha=\beta$. As already stated above, if $\supp(\alpha)$ is a maximal
ideal, then $\alpha$ is a maximal prime cone.

\parb
We now turn to the proof of Proposition~\ref{diagram}.
\begin{proof}
The map $\iota_{F|R}$ has already been introduced and shown in Theorem~\ref{topspaces} to be a topological
embedding of $M(F|R)$ into $M(F)$; by Proposition~\ref{dense}, its image $M_R(F)$ is dense in $M(F)$. The map
$\tau$ on the right hand side equals $\tau_{A,B}$ for $A=H(F),B=H(F|R)$. So it is continuous. Surjectivity follows
once the statements on the maps $i,j$ and the commutativity of the diagram have been shown; this is seen as
follows. As we are
dealing with compact spaces the image of $\tau$ is closed, and furthermore, it contains the image of the dense
subspace $M_R(F)$ under the homeomorphism $j$. All this implies that $\tau$ is surjective.

To define the map $i: M(F|R)\rightarrow \MaxSpec_r(H(F|R))$ and study its properties we need the following facts.
A place $\lambda\in M(F|R)$ induces a $R$-epimorphism $\phi: H(F|R)\rightarrow R$ whose kernel $\mathfrak{p}_\lambda$
is a maximal ideal of $H(F|R)$. As this ring is a Pr\"ufer ring we see that the valuation ring $V$ of $\lambda$
is just the localization $H(F|R)_{\mathfrak{p}_\lambda}, V=\{a/b \mid a,b \in H(F|R), b\notin \mathfrak{p}_\lambda$\}, hence $\lambda(a/b)=\phi(a)/\phi(b)$. Altogether we obtain that the places in $M(F|R)$ are determined by their restriction to $H(F|R)$. Using the unique ordering on $R$ we now define the natural map
\begin{eqnarray*}
i:M(F|R)&\rightarrow& \MaxSpec_r(H(F|R)),\\
\lambda &\mapsto& \alpha_\lambda:=\{a\in H(F|R)\mid \lambda(a)\geq 0\}\>.
\end{eqnarray*}
We observe that $\supp(\alpha_\lambda)=\mathfrak{p}_\lambda$, so indeed, $\alpha_\lambda$ is a maximal prime cone,
as its support is a maximal ideal. As each $\lambda\in M(F|R)$ is the identity on $R$ we find that for each $a\in
H(F|R)$ we have $a-\lambda(a)\in \mathfrak{p}_\lambda$. From this the injectivity of $i$ follows: indeed, if
$\alpha_\lambda=\alpha_\mu\,$, then $\mathfrak{p}_\lambda=\mathfrak{p}_\mu$, so $\mu(a-\lambda(a))=0$ and therefore
$\mu(a)=\lambda(a)$ for every $a\in H(F|R)$, whence $\mu=\lambda$. In addition we obtain that $a(\alpha_\lambda)=
\lambda(a)$ for any $a\in H(F|R)$, from which we deduce:
\[
i^{-1}\left(\widetilde{\cU}(a_1,\dots,a_n)\cap \MaxSpec_r(H(F|R))\right)=V(a_1,\dots,a_n)\>.
\]
This means that the map $i$ is a topological embedding. To prove that the image is dense, consider a nonempty
basic open subset 
$$\overline{\cU}=\widetilde{\cU}(a_1,\dots,a_n)\cap \MaxSpec_r(H(F|R))$$
and pick one of its elements $\alpha$. Set $\mathfrak{p}=\supp(\alpha)$. The residue field of the valuation ring
$H(F|R)_\mathfrak{p}$ equals the residue field of $\alpha$. Therefore we can pull back the ordering $\bar{\alpha}$
to construct an ordering $>$ on $F$ which satisfies $a_i>0$ for $1\leq i\leq n$. Now the arguments presented in
the proof of Proposition~\ref{TopMF|R} yield the existence of $\lambda\in M(F|R)$ with $\lambda(a_i)>0$ for
$1\leq i\leq n$.
We see that $\alpha_\lambda\in \overline{\cU}$.

In the case of the map $j$ we follow a similar route. A place $\xi\in M(F)$ induces a homomorphism
$H(F)\rightarrow \R$. We define
\[
j: M(F)\rightarrow \Spec_r(H(F)), \xi \mapsto \alpha_\xi:=\{a\in H(F)\mid \xi(a)\geq 0\}\>.
\]
This time however, the kernel $\mathfrak{p}_\xi=\supp(\alpha_\xi)$ need not be a maximal ideal.
Nevertheless, $\alpha_\xi\in \MaxSpec_r(H(F))$. To see this, first note that the residue field of $\xi$
equals the residue field of $\alpha_\xi$, which embeds into $\R$. Hence the induced ordering
$\overline{\alpha_\xi}$ is nothing but the pullback of the natural ordering on $\R$. Thus it is an
archimedean ordering of the residue field.

Now assume that $\alpha_\xi\subsetneq \beta$ for some $\beta \in \Spec_r(H(F))$; we wish to deduce a
contradiction. Then, due to the above observation, we obtain that
$\mathfrak{p}:=\supp(\alpha_\xi)\subsetneq \mathfrak{q}:=\supp(\beta)$. Then we can choose $a\in \mathfrak{q}
\setminus \mathfrak{p}$, and we can assume that $a\in \alpha_\xi$ since otherwise, we can replace $a$ by $-a$.
For each rational number $r>0$ we have that $r+a\in \alpha_\xi$
but also $r-a\in \alpha_\xi$: if not, then we would obtain that $r-a \in -\alpha_\xi\subseteq -\beta$ and
$r-a\in \beta$ as $a\in \pm \beta$. This would imply that $r-a\in \mathfrak{q}$, which leads to the contradiction
$r\in \mathfrak{q}$. Passing
to the residue field we see that the non-zero element $\bar{a}$ is infinitesimally small relative to the
archimedean ordering $\overline{\alpha_\xi}$: a contradiction to our assumption. Thus the image of $j$ is
contained in $\MaxSpec_r(H(F))$.

To prove the injectivity of $j$ assume that $\alpha_\xi=\alpha_\zeta$. Then both places have the same valuation
ring and the same residue field on which they induce embeddings $\bar{\xi}, \bar{\zeta}$ into $\R$, subject to
the condition $\bar{\xi}(\bar{a})>0\Leftrightarrow \bar{\zeta}(\bar{a})>0$ for every $a\in H(F)$. As $\Q$
is dense in $\R$ we find that $\bar{\xi}=\bar{\zeta}$, whence $\xi=\zeta$.

From the equivalence $a(\alpha_\xi)>0\Leftrightarrow \xi(a)>0$ we find
that $j$ is a topological embedding of $M(F)$ into $\MaxSpec_r(H(F))$.

Now we show that $j$ is surjective. Consider any $\alpha\in \Spec_r(H(F))$. We want to show that $\alpha\subseteq
\alpha_\xi$ for some $\xi\in M(F)$. This, of course, will settle our claim. Set $\mathfrak{p}=\supp(\alpha)$. Then
$H(F)_\mathfrak{p}$ is the valuation ring of a place $\zeta: F\rightarrow k(\mathfrak{p})\cup
\infty$, where $k(\mathfrak{p})$ is the quotient field of $H(F)/\mathfrak{p}$. It is known that
$H(F)/\mathfrak{p}=H(k(\mathfrak{p}))$, see \cite[1.4]{Sch1}. The ordering $\bar{\alpha}$ induces 
a real place $\lambda_{\bar{\alpha}}$
with a valuation ring which contains $H(k(\mathfrak{p}))=H(F)/\mathfrak{p}$. Using the residue map
$\pi:H(F)\rightarrow H(k(\mathfrak{p}))$, we find $\xi\in M(F)$, determined by the condition
$\xi|_{H(F)}=\lambda_{\bar{\alpha}}\circ \pi$. One readily checks that $\alpha\subseteq \alpha_\xi\,$.

It remains to address the commutativity of the diagram. Starting with $\lambda\in M(F|R)$ we have to show that
\[
\rho\left(\alpha_\lambda\cap H(F)\right)=\alpha_\xi \text{ with }\xi=\xi_R\circ\lambda\>.
\]
As $\alpha_\xi$ is a maximal prime cone it is sufficient to prove that $\alpha_\lambda\cap H(F)\subseteq
\alpha_\xi$. Pick any $a\in H(F)$ with $\lambda(a)\geq 0$. Then $\lambda(a)\in H(R)$ and consequently,
$\xi_R(\lambda(a))\geq 0$, i.e., $a\in \alpha_\xi$.
\end{proof}

Next, the proof of Proposition \ref{components} will be sketched.

\begin{proof}
We know that $\tau$ is continuous and surjective. Therefore, once we know that $\MaxSpec_r(H(F|R))$ has only
finitely many connected components, we can derive the same for $\MaxSpec_r(H(F))$. We list the arguments needed
to show that $\MaxSpec_r(H(F|R))$ decomposes into finitely many connected components. First of all, for any given
ring $A$ the specialization map $\rho: \Spec_r(A)\rightarrow \MaxSpec_r(A)$ induces a bijection between the set of
connected components of $\Spec_r(A)$ and that of $\MaxSpec_r(H(A))$, see for instance \cite[p.129, Satz 6]{KS}.
Consequently, we are facing the problem to show that $\Spec_r(H(F|R))$ admits only finitely many connected
components. This follows from Sch\"ulting's result \cite[p. 436, Theorem]{Sch} as it is known that algebraic sets
over real closed fields decompose into finitely many semi-algebraically connected components. By the way, they
are exactly the semi-algebraic path connected components, see \cite[Sections 2.4.,2.5]{BCR} and \cite[Theorem 4.1]{DK}.
\end{proof}
\n
Note that the surjectivity of $\tau$ can be obtained in a more
direct way by appealing to the Baer-Krull Theorem. But we
preferred to convey the present argument for the sake of a coherent
presentation.

\sn
\begin{remark}
Without providing any further details, we want to conclude by another observation.
The number of connected components $s_F$ of $\Spec_r(H(F|R))$, which is a geometric invariant of $F$, is an
upper bound for the number of connected components $t_F$ of $M(F)$. This is a consequence of the last proof.
However, it may happen that $s_F>t_F\,$, as we will show now.

Take a non-archimedean real closed field $R$, and denote by $R^+$ the set of its positive elements and by $I^+$
the set of its positive infinitesimals.
Take $a\in I^+$. Let $F$ be the function field of the real complete affine curve $C$ given by
\[
 y^2=(x^2-a^2)(1-x^2)\>.
 \]
The relative real holomorphy $H(F|R)$ equals the coordinate ring $A:=R[C]$ and is a Dedekind ring.
The curve $C$ has two semialgebraic connected components separated by the function $x$.

The real spectrum $\Spec_r(A)$ consists of the prime cones
\[
P(\alpha,\beta)\>:=\>\{f\in A\mid f(\alpha,\beta)\geq 0\}
\]
attached to the points $(\alpha,\beta)\in C$ and the prime cones $P\cap A$, where $P$ runs through
the orderings of $F$. The first ones are maximal prime cones. A prime cone of the
second type is maximal if and only if $(F,P)$ is archimedean over $A$, and this holds if and only if $(F,P)$ is
archimedean over $R$ (see \cite[Corollary 5, p.\ 134]{KS}).

Take the ordering
\[
P\>:=\> \{f \mid \exists d \in I^+ \exists e \in R^+ \setminus I^+ : f(c) >0\mbox{ for all } c \in (d,e)\}
\]
of $R(x)$. The ordering $P$ has exactly one extension $P'$ to $F$ in which $y$ is positive.
Take the automorphism $\sigma$ of $F$ such that $\sigma(x) = -x$ and  $\sigma(y) = y$.
Then $Q' = \sigma(P')$ is an ordering of $F$ such that $\lambda_{P'} = \lambda_{Q'}$.
Since $P$ is archimedean over $R$, the same is true for $P'$ and $Q'$.
The function $x$ is positive in $P'$ and negative in $Q'$, therefore  $P' \cap A$ and $Q'\cap A$ belong to
different components of $\Spec_r(H(F|R))$. But the map $\tau$ from Proposition~\ref{diagram} sends
the maximal prime cones $P' \cap A$ and $Q' \cap A$ to prime cones related with the real place $\lambda_{P'}=
\lambda_{Q'}$, which shows that the number of components drops.

The example above was also studied in the paper \cite{KK}, where the relation between cuts on the real curve and
the orderings of its function field was described. In general, the study of $t_F$ and its comparison to $s_F$
seem to be an interesting task.
\end{remark}


\end{document}